\documentclass[11pt,a4paper]{amsart}
\usepackage{amssymb,amsmath,epsfig,graphics,mathrsfs,enumerate,verbatim}
\usepackage[pagebackref,colorlinks=true,linkcolor=blue,citecolor=blue]{hyperref}
\usepackage{fancyhdr}
\pagestyle{fancy}
\fancyhead[RO,LE]{\small\thepage}
\fancyhead[LO]{\small \emph{\nouppercase{\rightmark}}}
\fancyhead[RE]{\small \emph{\nouppercase{\rightmark}}}
\fancyfoot[L,R,C]{}



\usepackage{hyperref}
\hypersetup{
 colorlinks   = true,
 urlcolor     = blue,
 linkcolor    = blue,
 citecolor   = red ,
 bookmarksopen=true
}


\usepackage{amsmath}
\usepackage{amsfonts}
\usepackage{amssymb}
\usepackage{amsthm}
\usepackage{epsfig,graphics,mathrsfs}
\usepackage{graphicx}
\usepackage{dsfont}

\usepackage[usenames, dvipsnames]{color}

\usepackage{hyperref}

 \textwidth = 16.1cm
  \textheight = 19.63cm

 \hoffset = -1.6cm

\def \N {\mathbb{N}}

\def \phi {\varphi}

\def \R {\mathbb{R}}

\def \G{\Gamma}
\newcommand{\Ba}{\mathcal B_{a}}

\def \vf{\varphi}


\newcommand{\p}{\partial}

\newcommand{\la}{\lambda}

\newcommand{\Rd}{\mathbb R^d}

\numberwithin{equation}{section}

\newcommand{\beq}{\begin{equation}}
\newcommand{\bea}[1]{\begin{array}{#1} }
\newcommand{\eeq}{ \end{equation}}
\newcommand{\ea}{ \end{array}}

\newcommand{\ve}{\varepsilon}



\newcommand{\sa}{\langle}
\newcommand{\da}{\rangle}



\newcommand{\C}{\mathbb{C}}

\newcommand{\sn}{\mathscr S^+}
\newcommand{\snn}{\mathbb S^+}




\newtheorem{theorem}{Theorem}[section]
\newtheorem{lemma}[theorem]{Lemma}
\newtheorem{proposition}[theorem]{Proposition}
\newtheorem{corollary}[theorem]{Corollary}

\newtheorem{definition}[theorem]{Definition}

\numberwithin{equation}{section}

\begin{document}

\title[]{Strichartz estimates for a Schr\"odinger equation on the half-line with a Neumann boundary condition}

\keywords{Schr\"odinger equation. Singular Cauchy problem. Bessel operator. Strichartz estimates. Restriction theorem}

\subjclass{35J10, 35Q41, 35Q55}

\date{}

\begin{abstract}
In this paper we prove some new Strichartz estimates related to the Cauchy problem for the Bessel operator on the half-line and we  establish a fractal version of the Tomas-Stein restriction theorem for the Hankel transform.  
Then we use the proved Strichartz  estimates to show  global in time well-posedness  for  a class of  nonlinear $L^2_a$-critical problems, and local in time well-posedness in the sub-critical case. \end{abstract}

\author{Nicola Garofalo}
\address{School of Mathematical and Statistical Sciences\\ Arizona State University}\email[Nicola Garofalo]{nicola.garofalo@asu.edu}

\author{Gigliola Staffilani}
\address{Department of Mathematics\\ Massachusetts Institute of Technology\\
Cambridge, Massachusetts, 02139\\USA}\email[Gigliola Staffilani]{gigliola@mit.edu}

\maketitle


\section{Introduction}\label{S:intro}

In this paper we establish a priori estimates of Strichartz type for the following singular Cauchy problem on the half-line $\R^+= (0,\infty)$ with Neumann boundary condition
\begin{equation}\label{cp0}
\begin{cases}
\p_t u - i \left(\p_{xx} u + \frac a{x} \p_x u\right) = F(x,t),\ \ \ \ \ x\in \R^+,\ t\in \R,\ \ a >-1, 
\\
\underset{x\to 0^+}{\lim} x^a \p_x u(x,t) = 0,\ \ \ \ t\in \R,
\\
u(x,0) = \vf(x).
\end{cases}
\end{equation}
When $a\geq 0$ and $F = \mu |u|^{p-1} u$, with $\mu\in \C$ and $p>1$, we prove  global in time existence and uniqueness for small initial data when the problem is $L^2_a$-critical (i.e., $p = 1+\frac{4}{a+1}$). In the $L^2_a$-subcritical case (i.e., $p < 1+\frac{4}{a+1}$), we obtain existence and uniqueness on a small time interval for any $\mu\in \C$, and we are able to upgrade this local result to a global in time one when $\mu = i b$, with $b\in \R$.
  
The background motivation for the present study is our upcoming work \cite{GaS} on nonlinear Schr\"odinger equations with memory. To explain the context, we mention that in the recent paper \cite{HOS}, see also \cite{OST},
the authors study the Cauchy problem in the half-plane $\R^2_+ = \R^+_x\times\R_y $ with nonlinear Neumann condition
\begin{equation}\label{ost}
\begin{cases}
\p_t U - i (\p_{yy} U + \p_{xx} U) = f(x,y,t),\ \ \ \ \ \ \ \ \ (y,x)\in \R^2_+, t>0,
\\
\underset{x\to 0^+}{\lim} \p_x U(\cdot,x,\cdot)  = -\mu |U|^{q-1} U,\ \ \ \ \ \ \ \ \ \ \ \ \ \ \ y\in \R, t>0,
\\
U(x,y,0) = U_0(x,y),
\end{cases}
\end{equation}
where $q>1$ and $\mu\in \C\setminus\{0\}$ are given constants. When $f\equiv 0$, the problem \eqref{ost} can be interpreted as a suitable ``extension" of the following nonlocal Cauchy problem with memory
\begin{equation}\label{ost2}
\begin{cases}
(\p_t u - i \p_{yy} u)^{1/2}  = \mu |u|^{q-1} u,\ \ \ \ \ \ \ \ \ y\in \R, t>0,
\\
u(y,t) = u_0(y,t),\ \ \ \ \ \ \ \ \ \ \ \ \ \ \ \ \ \ \ \ \ \ \ \ y\in \R, t\le 0.
\end{cases}
\end{equation}
More in general, in \cite{GaS} we study the Cauchy problem 
\begin{equation}\label{ost3}
\begin{cases}
(\p_t u - i \Delta_y u)^{s}  = \mu |u|^{q-1} u,\ \ \ \ \ \ \ \ \ \ \ y\in \Rd, t>0,
\\
u(y,t) = u_0(y,t),\ \ \ \ \ \ \ \ \ \ \ \ \ \ \ \ \ \ \ \ \ \ \ y\in \Rd, t\le 0,
\end{cases}
\end{equation}
where $s\in (0,1)$. 
To \eqref{ost3} we associate the \emph{extension problem} in the upper half-space $\R^{d+1}_+ = \R^+_x \times \Rd_y$
\begin{equation}\label{cp02}
\begin{cases}
\p_t U - i \left(\Delta_y U+ \p_{xx} U + \frac a{x} \p_x U\right) = 0,\ \ \ \ \ (x,y)\in \R^{d+1}_+,\  t\in \R,
\\
\underset{x\to 0^+}{\lim} x^a \p_x U(\cdot,x,\cdot) = -\mu |U|^{q-1} U,\ \ \ \ t\in \R,
\\
U(x,y,0) = U_0(x,y),
\end{cases}
\end{equation}
where $a = 1-2s\in (-1,1)$ (note that for \eqref{ost}, \eqref{ost2} we have $s=1/2$ if and only if  $a = 0$). 
This scenario is clearly reminiscent of the celebrated extension procedure of Caffarelli and Silvestre for the fractional powers of the Laplacian $(-\Delta)^s$, see \cite{CS}. As it is well-known, their work has had a transformative impact in the analysis of nonlocal problems, particularly in the study of free boundaries. The problem \eqref{cp02} is, however, considerably different since the extension operator is itself of Schr\"odinger type and it thus involves oscillatory phenomena in which positivity plays no role. For related works involving the wave operator, we refer the reader to \cite{KST, EGV}.

The study of \eqref{cp02} brings us to the model problem \eqref{cp0}, since understanding the latter is critical to solving the former. For instance, if with $\mu = 0$ we look for solutions of \eqref{cp02} which do not depend on the variable $y\in \Rd$, we are led to \eqref{cp0}. Henceforth, we denote by
\begin{equation}\label{Ba}
\Ba u = \p_{xx} u + \frac a{x} \p_x u = x^{-a} \p_x(x^a \p_x)
\end{equation}
the Bessel operator on the half-line $\R^+$. As it is well-known, with its associated invariant measure $d\omega_a(x) = x^a dx$, such operator
plays a central role in analysis and geometry, particularly in the study of partial differential equations and stochastic processes in which symmetries are involved. For instance, the Laplacian in $\Rd$ is given by
\begin{equation}\label{delta}
\Delta = \mathcal B_{d-1} + \frac{1}{r^2} \Delta_{\mathbb S^{d-1}},
\end{equation}
where $\Delta_{\mathbb S^{d-1}}$ is the Laplace-Beltrami operator on the unit sphere $\mathbb S^{d-1}$. The operator $-\Ba$ in \eqref{Ba} is nonnegative on $L^2_a = L^2(\R^+, d\omega_a)$ when restricted to $C^\infty_0(\R^+)$, as one has
\begin{equation}\label{pos}
\sa -\Ba \psi,\psi\da = - \int_0^\infty \bar \psi\ \Ba \psi\  d\omega_a = \int_0^\infty |\p_x \psi|^2 d\omega_a.
\end{equation}
The weighted Hardy inequality
\[
\int_{0}^\infty  \frac{|\psi|^2}{x^2}  d\omega_a \le \left(\frac{2}{|1-a|}\right)^2 \int_0^\infty |\p_x \psi|^2 d\omega_a,
\]
provides a lower bound for the Dirichlet form associated with the diffusion operator $-\Ba$.
Since the literature on \eqref{Ba} is vast, we only provide few references which are more closely connected to our study: \cite{We1, We, We2, MS, MO, BoSa, CS, BGL, Gft, Gams}.

The end-points $x = 0$ and $\infty$ of the half-line $\R^+$ are singular for $\Ba$. Frobenius' indicial equation $\nu(\nu-1) + a \nu = 0$ shows that two linearly independent solutions of $\Ba u = 0$ are $u_1(x) = 1$ and $u_2(x) = x^{1-a}$ (when $a =1$ one has to replace such $u_2$ with the singular solution $u_2(x) = \log x$). It is easily checked that, for any $\ve>0$, one has $u_1\in L^2_a(0,\ve) = L^2((0,\ve),d\omega_a)$ if and only if $a>-1$ and $u_2\in L^2_a(0,\ve) $ if and only if $a<3$ ($u_1(x)=1$ and $u_2(x) = \log x$ are both in $L^2_1(0,\ve)$ when $a=1$). Therefore, according to Weyl's characterization of singular Sturm-Liouville problems, $x = 0$ is limit-circle if and only if $-1<a<3$, see \cite[Chap. 2]{Ti} and \cite[Chap. 13]{DS}. The point $x = \infty$, instead, is always limit-point. We infer that 
\begin{equation}\label{sa}
\Ba\ \text{is self-adjoint in}\ L^2_a\ \ \Longleftrightarrow\  
a\in (-\infty,-1] \cup [3,\infty).
\end{equation}
We stress that for $a$ in the range in \eqref{sa} no boundary conditions are necessary. If instead $-1<a<3$, we impose in \eqref{cp0} the Neumann condition 
\begin{equation}\label{neu}
\underset{x\to 0^+}{\lim}\ x^a \p_x u = 0.
\end{equation}
This serves to eliminate the solution $u_2(x) = x^{1-a}\in L^2_a(0,\ve)$ (or $u_2(x) = \log x$ when $a = 1$). Therefore, 
\begin{equation}\label{saa}
-1 < a < 3\ +\ \eqref{neu}\ \Longrightarrow\ \Ba\ \text{is self-adjoint in}\ L^2_a.
\end{equation}
It is also worth noting here that, if $a\ge 1$, for no $\ve>0$ the function $u_2(x) = x^{1-a}$ belongs to the energy space
\[
H^1_a(0,\ve) = \{\vf\in L^2_a(0,\ve)\mid \p_x \vf\in L^2_a(0,\ve)\}.
\]
Therefore, when $a\ge 1$ such $C^\infty(\R^+)$ function is \emph{not} an energy solution of $\Ba u = 0$ (although it is a strong solution to the equation).

Henceforth, when we write $\Ba$ we will abuse the notation and indicate the self-adjoint extension of \eqref{Ba} in $L^2_a$, with the understanding that, when $-1<a<3$, we impose the condition \eqref{neu}. With this agreement, for any $a>-1$ the Stone-von Neumann theorem guarantees that $e^{i t \Ba}$ is a well-defined unitary group on $L^2_a$. 
The domain of such group will be the space $\sn$ of all functions $\vf:\R^+\to \C$ such that $\vf\in C^\infty(\R^+)$ and for every $m, k\in \mathbb N_0$ one has
\[
\gamma_{m,k}(\vf) :=  \underset{x>0}{\sup} \left|x^m \left(\frac 1x \p_x\right)^k \vf(x)\right| < \infty.
\]
The family $\gamma_{m,k}$ is a countable collection of seminorms which generates a Frechet space topology on $C^\infty(\R^+)$. Note that since $\gamma_{0,1}(\vf)<\infty$, we have $|\p_x \vf(x)|\le \gamma_{0,1}(\vf) x$ for any $x>0$.  This guarantees in particular that any function $\vf\in \sn$ satisfies the Neumann condition \eqref{neu} when $a>-1$. In analogy to $\sn$, we consider the space $\mathbb S^+$ of functions $F:\R^+\times \R\to \C$ in $C^\infty(\R^+\times \R)$ such that for any $p\in \mathbb N_0$ one has
\[
||F||_{p} = \underset{k+\ell \le p}{\sup}\ \ \underset{(x,t)\in\R^+\times \R}{\sup} (1+x^2+t^2)^{\frac p2} \left|\left(\frac 1x \p_x\right)^k \p_t^\ell F(x,t)\right|<\infty.
\]
Note that for any $F\in \snn$ we have for $a>-1$
\[
\underset{t\in \R}{\sup} |x^a \p_x F(x,t)|\ \underset{x\to 0^+}{\longrightarrow}\ 0.
\]
We have the following proposition that will be proved in Section 2. 
\begin{proposition}\label{P:rep}
Given $\vf\in \sn$ and $F\in \snn$, the solution of the Cauchy problem \eqref{cp0} admits the representation 
\begin{equation}\label{kernel}
u(x,t) =  \int_0^\infty S_a(x,y,t) \vf(y) d\omega_a(y) + \int_0^t \int_0^\infty S_a(x,y,t-s) F(y,s) d\omega_a(y) ds,
\end{equation}
where for $x, y>0$ we have defined
\begin{equation}\label{SaS0}
S_a(x,y,t) := 
\begin{cases}
\frac{e^{i \frac{(a+1) \pi}{4}}}{(2|t|)^{\frac{a+1}2}}  \left(\frac{xy}{2|t|}\right)^{\frac{1-a}2}J_{\frac{a-1}2}(\frac{xy}{2|t|}) e^{-i \frac{x^2+y^2}{4|t|}},\ \ \ \ \ t<0,
\\
\frac{e^{-i \frac{(a+1) \pi}{4}}}{(2|t|)^{\frac{a+1}2}}  \left(\frac{xy}{2|t|}\right)^{\frac{1-a}2}J_{\frac{a-1}2}(\frac{xy}{2|t|}) e^{i \frac{x^2+y^2}{4|t|}},\ \ \ \ \ t>0,
\end{cases}
\end{equation}
 where $J_\nu(z)$ denotes the Bessel function of the first kind and order $\nu$, see \eqref{besseries} below.
\end{proposition}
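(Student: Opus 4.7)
The natural tool is the Hankel transform of order $\nu = (a-1)/2$, which diagonalizes $\Ba$. Accordingly, on $\mathscr{S}^+$ define
$$\mathscr{H}_a f(\lambda) := \int_0^\infty f(x)\,(\lambda x)^{(1-a)/2} J_{(a-1)/2}(\lambda x)\, d\omega_a(x).$$
Using the Bessel equation satisfied by $\lambda \mapsto J_{(a-1)/2}(\lambda x)$ together with two integrations by parts on the weighted half-line, one obtains the spectral identity $\mathscr{H}_a(-\Ba f)(\lambda) = \lambda^2 \mathscr{H}_a f(\lambda)$. The boundary contributions at $x = 0$ vanish because any $f \in \mathscr{S}^+$ satisfies the Neumann condition \eqref{neu}, while the kernel $(\lambda x)^{(1-a)/2} J_{(a-1)/2}(\lambda x)$ is bounded near the origin for $a > -1$; the contributions at infinity vanish by the Schwartz-type decay built into $\mathscr{S}^+$. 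By the classical Hankel inversion theorem (valid since $\nu = (a-1)/2 > -1$), $\mathscr{H}_a$ is an involution of $\mathscr{S}^+$ onto itself.

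Applying $\mathscr{H}_a$ in the spatial variable to \eqref{cp0} reduces the problem to the one-parameter ODE
$$\partial_t \hat{u}(\lambda, t) + i \lambda^2 \hat{u}(\lambda, t) = \hat{F}(\lambda, t), \qquad \hat{u}(\lambda, 0) = \hat{\varphi}(\lambda),$$
whose unique solution is
$$\hat{u}(\lambda, t) = e^{-i t \lambda^2}\hat{\varphi}(\lambda) + \int_0^t e^{-i(t-s)\lambda^2}\hat{F}(\lambda, s)\, ds.$$
Inverting $\mathscr{H}_a$ in $\lambda$ and swapping the order of integration yields the representation \eqref{kernel} with kernel
$$S_a(x,y,t) = \int_0^\infty e^{-i t \lambda^2}\,(\lambda x)^{(1-a)/2} J_{\nu}(\lambda x)\,(\lambda y)^{(1-a)/2} J_{\nu}(\lambda y)\, \lambda^a\, d\lambda.$$

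The main obstacle is the evaluation of this oscillatory, not absolutely convergent, integral. I would start from the classical Weber second-exponential integral
$$\int_0^\infty e^{-p \lambda^2} J_\nu(x\lambda) J_\nu(y\lambda)\, \lambda\, d\lambda \;=\; \frac{1}{2p}\, e^{-(x^2+y^2)/(4p)}\, I_\nu\!\left(\frac{xy}{2p}\right), \qquad \operatorname{Re} p > 0,$$
specialized to $\nu = (a-1)/2$, and analytically continue from $p = \varepsilon + i t$, $\varepsilon > 0$, to $p = it$ via the limit $\varepsilon \downarrow 0$ understood in the sense of tempered distributions paired against $\mathscr{S}^+$-functions. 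Using $I_\nu(iz) = e^{i\nu\pi/2} J_\nu(z)$ and carefully tracking the branch of $(2p)^{-(a+1)/2}$ produces the prefactor $e^{\mp i (a+1)\pi/4}/(2|t|)^{(a+1)/2}$ of \eqref{SaS0}, with the sign dictated by $\operatorname{sgn} t$, while the Gaussian $e^{-(x^2+y^2)/(4p)}$ becomes $e^{\pm i (x^2+y^2)/(4|t|)}$.

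It remains to check that the representation \eqref{kernel} obtained in this way genuinely solves \eqref{cp0}. Differentiation under the integral sign is legitimate because of the Schwartz-class hypotheses on $\varphi$ and $F$, the equation $\partial_t u - i \Ba u = F$ holds by construction on the Hankel side, and the Neumann condition is realized because $(\lambda x)^{(1-a)/2} J_{\nu}(\lambda x) = c_a + O(x^2)$ near $x = 0$, so that $\partial_x S_a(x,y,t) = O(x)$ uniformly on compact sets in $(y,t)$ and hence $x^a \partial_x u(x,t) \to 0$ as $x \to 0^+$.
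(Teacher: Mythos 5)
Your proposal is correct and follows the same overall strategy as the paper's proof: diagonalize $\Ba$ via the modified Hankel transform of order $\nu=\frac{a-1}{2}$ (the paper's Theorems \ref{T:hankel} and \ref{T:sq}), solve the resulting ODE in $t$, invert, and read off the kernel, with the inhomogeneous term handled by Duhamel. The one step where you genuinely diverge is the evaluation of the oscillatory integral $\int_0^\infty \lambda\, e^{-it\lambda^2}J_\nu(x\lambda)J_\nu(y\lambda)\,d\lambda$. The paper splits $e^{-it\lambda^2}$ into cosine and sine parts and quotes the closed-form identities \eqref{b1}--\eqref{b2} of Lemma \ref{L:beauty} (Gradshteyn--Ryzhik 6.729), which hold directly as improper Riemann integrals in the full range $\Re\nu>-1$, so no limiting argument is needed. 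You instead start from Weber's second exponential integral for $\Re p>0$ and pass to the boundary $p=it$ distributionally, using $I_\nu$--$J_\nu$ conversion and the branch of $(2p)^{-(\nu+1)}$; this is exactly the route the paper itself sketches as a ``formal derivation'' from the heat kernel \eqref{heatsg} at the opening of Section \ref{S:solve}, before opting for the direct computation. Your route is legitimate and arguably more conceptual, but to make it complete you would need to justify two points you only gesture at: (i) that the weak limit $\varepsilon\downarrow 0$ commutes with the outer integration against $\vf\,d\omega_a$ (dominated convergence on the Hankel side, using the decay of $\mathcal H_\nu(\vf)$, handles this); and (ii) the phase bookkeeping: for $t>0$ the factor $(2it)^{-1}$ contributes $e^{-i\pi/2}$ and $I_\nu\!\left(\frac{xy}{2it}\right)=e^{-i\pi\nu/2}J_\nu\!\left(\frac{xy}{2t}\right)$ contributes $e^{-i\pi\nu/2}$, whose product $e^{-i\pi(\nu+1)/2}=e^{-i(a+1)\pi/4}$ is the prefactor in \eqref{SaS0}. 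The paper's choice trades this analytic-continuation bookkeeping for the citation of two less familiar tabulated integrals.
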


We note the following properties:
\begin{itemize}
\item[(i)] $S_a(x,y,t) = S_a(y,x,t)$;
\item[(ii)] $S_a(\la x,\la y,\la^2 t) = \la^{-(a+1)} S_a(x,y,t)$,\ \ \ \ $\la>0$;
\item[(iii)] $\int_0^\infty S_a(x,y,t) d\omega_a(y) = 1$\ \ \ \ $-1<a<2$\ \ (as a generalized Riemann integral).
\end{itemize}
Property (i) is obvious from \eqref{SaS0} and it derives from the symmetry of the operator $\Ba$ in $L^2_a$. Property (ii) reflects the invariance of $\p_t - i \Ba$ with respect to the scalings $\la \to (\la z,\la^2 t)$. In particular, (ii) implies that for $t\not=0$
\begin{equation}\label{scaleinv}
S_a(x,y,|t|) = |t|^{-\frac{a+1}2} S_a(\frac{x}{\sqrt{|t|}},\frac{y}{\sqrt{|t|}},1).
\end{equation}
Property (iii) is proved in Proposition \ref{P:1}. In contrast with (iii), we note that for the kernel $p_a(x,y,t)$ of the heat semigroup $e^{-t\Ba}$ with the Neumann condition \eqref{neu} (see also \eqref{heatsg} below), one has  the stochastic completeness
\[
\int_0^\infty p_a(x,y,t) d\omega_a(y) = 1,\ \ \ \ \ \ \ \ x, t>0,
\]
in the full range $a>-1$, see \cite[Prop. 2.3]{Gams}. This difference is subtle, but one can surmise that the more favorable situation of the heat semigroup is due to that fast decay of its kernel at infinity. 
From the well-known properties of the Bessel functions (see \eqref{Js} below), we see that for any $y>0$,
\begin{equation}\label{pazero}
S_a(0,y,t) = \underset{x\to 0^+}{\lim} S_a(x,y,t) = \begin{cases} \frac{e^{i \frac{(a+1) \pi}{4}}}{2^{a} \G(\frac{a+1}2)} |t|^{-\frac{a+1}{2}} e^{-i\frac{y^2}{4|t|}},\ \ \ \ t<0,
\\
\frac{e^{-i \frac{(a+1) \pi}{4}}}{2^{a} \G(\frac{a+1}2)} |t|^{-\frac{a+1}{2}} e^{i\frac{y^2}{4|t|}},\ \ \ \ t>0,
\end{cases}
\end{equation}
but, in the regime $-1<a<0$, the behavior of $S_a(x,y,t)$ is dramatically different when $x\not=0$.
We also have for any $y>0$ and $t\not=0$,
\begin{equation}\label{dzpazero}
\underset{x\to 0^+}{\lim} x^a \p_x S_a(x,y,t) = 0.
\end{equation}
Since when $a>-1$ the kernel $S_a(x,y,t) $ is the fundamental solution for problem \eqref{cp0}, the limit relation \eqref{dzpazero} should come as no surprise.

In Section 2,  Proposition \ref{P:cpH} also reveals the important link between the Cauchy problem \eqref{cp0} and the modified Hankel transform $\mathcal H_\nu$ in \eqref{ht1}. This link is fundamental to prove the main estimates in this work.

To state our results, we need to introduce some notation. For any $1\le r<\infty$, we will indicate by $L^r_a$ the Banach space of the measurable functions $f: \R^+\to \overline \C$ such that 
\[
||f||_{L^r_a} = \left(\int_0^\infty |f(x)|^r d\omega_a(x)\right)^{1/r} < \infty.
\] 
Given exponents $1\le q, r<\infty$, we will also use the mixed-norm Lebesgue spaces $L^q_t L^r_a$ of measurable functions $u:\R^+\times \R\to \overline \C$ such that 
\[
||u||_{L^q_t L^r_a} = ||\ ||F(\cdot,t)||_{L^r_a}\ ||_{L^q_t} = \left(\int_\R ||u(\cdot,t)||^q_{L^r_a} dt\right)^{1/q}<\infty.
\] 
For any given $\vf\in \sn$ denote by $u$ the unique solution of the problem \eqref{cp0} with zero  term, namely  $F\equiv 0$, and  given by \eqref{kernel}. Assume that there exist exponents $1\le q, r\le \infty$ such that for some universal constant $C = C(a,r,q)>0$, the following a priori estimate holds
\begin{equation}\label{strinec}
||u||_{L^q_t L^r_a} \le C \ ||\vf||_{L^2_a}.
\end{equation}  
Now, for any $\la >0$ the function $u_\la(x,t) = u(\la x, \la^2 t)$ also solves  \eqref{cp0} with $F = 0$, and initial datum $\vf_\la(x) = \vf(\la x)$. If we substitute such $u_\la, \vf_\la$ in \eqref{strinec}, after an elementary change of variable we obtain
\[
\la^{-(\frac{a+1}r + \frac 2q)} ||u||_{L^q_t L^r_a} \le C\ \la^{-\frac{a+1}2} ||\vf||_{L^2_a}.
\]
We thus reach the conclusion that a necessary condition for \eqref{strinec} to hold is that
\begin{equation}\label{ad0}
\frac 2q = (a+1)\left(\frac 12 - \frac 1r\right).
\end{equation}
Note that this equation forces $r\ge 2$. 
\begin{definition}\label{D:a}
Let $a>-1$. Let $r>2$, and assume furthermore that $r<\frac{2(a+1)}{a-1}$ if $a>1$. We say that a pair $(q,r)$ is $a$-\emph{admissible} if  
\[
\frac 2q = (a+1)(\frac 12 - \frac 1r).
\]
\end{definition}

Our first main result shows that \eqref{ad0} is also sufficient for \eqref{strinec}, at least in the regime $a\ge 0$. These are indeed the Strichartz estimates
for our Cauchy problem  \eqref{cp0} that correspond to the ones for the Schr\"odinger operator for example in \cite{Caze} and also \cite[Chap. 11]{MuS1}. More precisely, we have the following.

\begin{theorem}\label{T:GV}
Let $a\ge 0$ and let the pair $(q,r)$ be $a$-\emph{admissible}. There exists $C(a,r)>0$ such that for every $\vf\in \sn$ and $F\in \snn$, the unique solution $u$ of the problem \eqref{cp0}, satisfies the following a priori estimate
\begin{equation}\label{striuno}
||u||_{L^q_t L^r_a} \le C(a,r) \left(||\vf||_{L^2_a} + ||F||_{L^{q'}_t L^{r'}_a}\right).
\end{equation}
\end{theorem}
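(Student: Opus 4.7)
The approach is the classical Ginibre--Velo $TT^*$ strategy, combined with the explicit kernel of Proposition \ref{P:rep}. Throughout, write $U(t) = e^{it\Ba}$, so that the first summand in \eqref{kernel} is $U(t)\vf$.

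\emph{Step 1 (Dispersive estimate).} The essential analytic input is the pointwise bound $|S_a(x,y,t)| \le C(a)\,|t|^{-(a+1)/2}$ for $x,y>0$, $t\ne 0$, provided $a\ge 0$. Thanks to the scaling property (ii), this reduces to the uniform boundedness of $S_a(\cdot,\cdot,1)$ on $\R^+\times\R^+$, which via \eqref{SaS0} amounts to a bound on $g(z) := z^{(1-a)/2} J_{(a-1)/2}(z)$ on $(0,\infty)$. The small-argument expansion $J_\nu(z)\sim (z/2)^\nu/\G(\nu+1)$ shows that $g$ has a finite limit at $0$, while the large-argument asymptotic $J_\nu(z) \sim \sqrt{2/(\pi z)}\cos(z - \nu\pi/2 - \pi/4)$ gives $|g(z)|\le C\,z^{-a/2}$ as $z\to\infty$, which stays bounded precisely because $a\ge 0$. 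Combined with the $L^2_a$-unitarity of $U(t)$ from Stone--von Neumann, Riesz--Thorin interpolation yields, for every $r\in[2,\infty]$,
\begin{equation}\label{disp}
\|U(t)\vf\|_{L^r_a}\le C\,|t|^{-(a+1)(1/2-1/r)}\|\vf\|_{L^{r'}_a}.
\end{equation}

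\emph{Step 2 (Homogeneous Strichartz via $TT^*$).} Setting $T\vf = U(\cdot)\vf$, one has $TT^* F(t) = \int_\R U(t-s)F(s)\,ds$, so Minkowski's inequality and \eqref{disp} give
\[
\|TT^*F(t)\|_{L^r_a} \le C\int_\R |t-s|^{-(a+1)(1/2-1/r)}\|F(s)\|_{L^{r'}_a}\, ds.
\]
For an $a$-admissible pair $(q,r)$ satisfying the restrictions of Definition \ref{D:a}, one has $(a+1)(1/2-1/r) = 2/q\in (0,1)$, so the one-dimensional Hardy--Littlewood--Sobolev inequality produces $\|TT^*F\|_{L^q_t L^r_a}\le C\|F\|_{L^{q'}_t L^{r'}_a}$. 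Standard $TT^*$ duality then delivers the homogeneous estimate $\|T\vf\|_{L^q_t L^r_a}\le C\|\vf\|_{L^2_a}$, which controls the first summand of \eqref{kernel}. The same convolution bound controls the full, non-retarded integral $\int_\R U(t-s)F(s)\,ds$; since all admissible pairs have $q>2$, the Christ--Kiselev lemma (applied in the Banach-space-valued setting) transfers this to the retarded integral $\int_0^t U(t-s)F(s)\,ds$ appearing in \eqref{kernel}, and summing yields \eqref{striuno}.

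\emph{Main obstacle.} The only substantive difficulty is the uniform kernel bound in Step 1; the restriction $a\ge 0$ enters precisely there, since for $-1<a<0$ one has $|g(z)|\asymp z^{|a|/2}$ at infinity and the scale-invariant dispersive estimate must fail in its present form. Once Step 1 is in place, the remainder is the textbook $TT^*$/Christ--Kiselev machinery, carried out with respect to the measure $d\omega_a$ rather than Lebesgue measure, and no novel difficulty arises from the boundary condition \eqref{neu} because it is already encoded in the self-adjoint realization of $\Ba$ used to define $U(t)$.
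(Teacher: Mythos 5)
Your proposal is correct and follows essentially the same route as the paper: the pointwise kernel bound $|S_a(x,y,t)|\le C(a)|t|^{-(a+1)/2}$ from the Bessel asymptotics (where $a\ge 0$ is used exactly as you say), interpolation against $L^2_a$-unitarity to get the dispersive estimate, and then the $TT^*$ plus Hardy--Littlewood--Sobolev argument with duality for the homogeneous part. The only divergence is the Duhamel term: the paper does not invoke Christ--Kiselev but simply bounds the retarded integral by Minkowski's inequality and the dispersive estimate, enlarging $\int_0^t$ to the full convolution and applying HLS again --- which suffices here because the estimate is diagonal (the same admissible pair on both sides); the transfer to distinct admissible pairs is deferred to Corollary \ref{C:caze}.
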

From Theorem \ref{T:GV} we have the following corollary (see the arguments in \cite{Caze}).
\begin{corollary}\label{C:caze}
Let $a\ge 0$ and let the pairs $(q,r)$ and $(\tilde q,\tilde r)$ be $a$-\emph{admissible}. There exists $C(a,r, \tilde r)>0$ such that for every $\vf\in \sn$ and $F\in \snn$, the unique solution $u$ of the problem \eqref{cp0}, satisfies the following a priori estimate
\begin{equation}\label{striuno}
||u||_{L^q_t L^r_a} \le C(a,r) \left(||\vf||_{L^2_a} + ||F||_{L^{\tilde q'}_t L^{\tilde r'}_a}\right).
\end{equation}
\end{corollary}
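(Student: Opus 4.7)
The plan is to deduce the mixed-pair estimate from Theorem \ref{T:GV} (where the source term is dualized to the same pair $(q,r)$) by the standard combination of a $TT^*$ argument with the Christ--Kiselev lemma, as in \cite{Caze}. The homogeneous piece of the representation \eqref{kernel} is already controlled directly by Theorem \ref{T:GV}, so the whole task reduces to handling the Duhamel term
\[
(I F)(x,t) = \int_0^t \int_0^\infty S_a(x,y,t-s) F(y,s)\, d\omega_a(y)\, ds.
\]

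First I would replace the truncated integral by the untruncated operator
\[
(\tilde I F)(x,t) = \int_{\R} e^{i(t-s)\Ba} F(\cdot,s)(x)\, ds = e^{it\Ba}\!\left(\int_{\R} e^{-is\Ba} F(\cdot,s)\, ds\right),
\]
and bound it by a $TT^*$-type duality argument. Setting $v = \int_\R e^{-is\Ba} F(\cdot,s)\, ds \in L^2_a$, pairing against $g\in L^2_a$ and using that $e^{is\Ba}$ is unitary on $L^2_a$ gives
\[
|\langle v,g\rangle_{L^2_a}|  =  \left|\int_\R \langle F(\cdot,s), e^{is\Ba} g\rangle_{L^2_a}\, ds\right|  \le  \|F\|_{L^{\tilde q'}_t L^{\tilde r'}_a}\, \|e^{is\Ba}g\|_{L^{\tilde q}_t L^{\tilde r}_a}.
\]
By Theorem \ref{T:GV} applied with the admissible pair $(\tilde q,\tilde r)$ and $F\equiv 0$, the last factor is at most $C\|g\|_{L^2_a}$, so $\|v\|_{L^2_a} \le C\|F\|_{L^{\tilde q'}_t L^{\tilde r'}_a}$. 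Applying Theorem \ref{T:GV} once more, but now with the admissible pair $(q,r)$ and initial datum $v$, yields
\[
\|\tilde I F\|_{L^q_t L^r_a}  =  \|e^{it\Ba} v\|_{L^q_t L^r_a}  \le  C\|v\|_{L^2_a}  \le  C\|F\|_{L^{\tilde q'}_t L^{\tilde r'}_a}.
\]

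The remaining step is to pass from the untruncated $\tilde I$ to the retarded operator $I$, which is a truncation in the time variable with the kernel $\mathbf{1}_{s<t}$. This is precisely the situation handled by the Christ--Kiselev lemma: it converts the boundedness $\tilde I:L^{\tilde q'}_t L^{\tilde r'}_a \to L^q_t L^r_a$ into the same bound for $I$, provided $\tilde q' < q$. Checking this inequality is the only nontrivial bookkeeping: admissibility $2/q=(a+1)(1/2-1/r)$ together with the constraints $r>2$ and, if $a>1$, $r<2(a+1)/(a-1)$ in Definition \ref{D:a} forces $q>2$, and similarly $\tilde q>2$, so $\tilde q'<2<q$ strictly. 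Thus Christ--Kiselev applies and gives $\|IF\|_{L^q_t L^r_a}\le C\|F\|_{L^{\tilde q'}_t L^{\tilde r'}_a}$.

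Combining the estimate on the homogeneous part from Theorem \ref{T:GV} with the bound just obtained for $IF$ and using \eqref{kernel} delivers \eqref{striuno}. The main obstacle is the truncation-in-time step: verifying the hypotheses of the Christ--Kiselev lemma in the form we need, in particular the strict inequality $\tilde q'<q$ across the full range of admissible pairs allowed by Definition \ref{D:a}; once this is in place the rest is a direct $TT^*$ computation.
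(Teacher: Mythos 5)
Your argument is correct, but it takes a different route from the one the paper points to. The paper gives no proof and instead invokes ``the arguments in \cite{Caze}'': there the mixed-pair inhomogeneous estimate is obtained by working with the \emph{retarded} Duhamel operator throughout and interpolating between three bounds that are already available from the ingredients of Section \ref{S:stri} --- the diagonal case $L^{q'}_tL^{r'}_a\to L^q_tL^r_a$ (Theorem \ref{T:comp}), the bound $L^{1}_tL^{2}_a\to L^q_tL^r_a$ obtained from Minkowski's inequality and the homogeneous estimate \eqref{tri}, and the bound $L^{q'}_tL^{r'}_a\to L^\infty_tL^2_a$ obtained by the same duality computation as in Theorem \ref{T:striT}. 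You instead bound the \emph{untruncated} operator by a $TT^\star$ duality (which is essentially a re-derivation of Theorem \ref{T:striT} for the pair $(\tilde q,\tilde r)$ followed by \eqref{tri} for $(q,r)$) and then restore the time truncation via the Christ--Kiselev lemma. Your verification of the hypothesis $\tilde q'<q$ is correct and worth making explicit: Definition \ref{D:a} forces $r>2$, hence $0<\tfrac2q=(a+1)(\tfrac12-\tfrac1r)$, and the extra constraint $r<\tfrac{2(a+1)}{a-1}$ for $a>1$ forces $\tfrac2q<1$, so every admissible pair has $q>2$ and therefore $\tilde q'<2<q$; the endpoint $(\,q,r)=(\infty,2)$, where Christ--Kiselev fails, is excluded by the definition. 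The trade-off is that your route imports the (Banach-space-valued) Christ--Kiselev lemma, which is neither proved in the paper nor contained in \cite{Caze}, whereas the interpolation route uses only tools the paper has already set up (Proposition \ref{P:disp}, Hardy--Littlewood--Sobolev, and Riesz--Thorin/Stein interpolation). Either way the conclusion \eqref{striuno} with the mixed pairs follows, so the proof stands; if you keep the Christ--Kiselev version you should state the lemma precisely and cite a source for it.
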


As a special case of Theorem \ref{T:GV}, we note that  since for any $r>2$ the pair $(r,r)$ is $a$-admissible when
\begin{equation}\label{qr}
r = \frac{2(a+3)}{a+1},\ \ \ \ \ \ r' = \frac{2(a+3)}{a+5},
\end{equation}  
 we obtain the following result, reminiscent of the {\it classical} Strichartz estimate in \cite[Cor.1]{Stri}
\begin{equation}\label{stri}
||u||_{L^{\frac{2(d+2)}d}(\R^{d+1})} \le C(d) \left(||\vf||_{L^2(\Rd)} + ||F||_{L^{\frac{2(d+2)}{d+4}}(\R^{d+1})}\right),
\end{equation}
if we formally let $d = a+1$. 
We write this fact in the following corollary.
\begin{corollary}\label{T:stribest0}
Given $a\ge 0$, there exists $C(a)>0$ such that, for every $\vf\in \sn$ and $F\in \snn$, the solution $u$ of \eqref{cp0} satisfies the a priori inequality
\begin{equation}\label{stridue0}
\left(\int_{\R} \int_0^\infty |u(x,t)|^{\frac{2(a+3)}{a+1}} d\omega_a(x) dt\right)^{\frac{a+1}{2(a+3)}} \le C(a) \left\{||\vf||_{L^2_a} + \left(\int_{\R} \int_0^\infty |F(x,t)|^{\frac{2(a+3)}{a+5}} d\omega_a(x) dt\right)^{\frac{a+5}{2(a+3)}}\right\}.
\end{equation}
 \end{corollary}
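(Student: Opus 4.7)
The plan is to obtain this statement as an immediate specialization of Theorem~\ref{T:GV} to the diagonal exponent pair $q=r=\frac{2(a+3)}{a+1}$, so the only real work is verifying that this diagonal choice lies in the admissible range of Definition~\ref{D:a} and that its Hölder conjugate comes out to $\frac{2(a+3)}{a+5}$.

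First I would check the two inequalities required of $r$. Since $r=\frac{2(a+3)}{a+1}>2$ is equivalent to $a+3>a+1$, the strict inequality $r>2$ holds for every $a>-1$. When $a>1$, the upper bound $r<\frac{2(a+1)}{a-1}$ amounts after cross-multiplying to $(a+3)(a-1)<(a+1)^2$, i.e.\ $a^2+2a-3<a^2+2a+1$, which is trivially true. Next, I would verify the admissibility relation \eqref{ad0}: with $q=r$ one computes
\begin{equation*}
\frac{2}{q}=\frac{a+1}{a+3},\qquad (a+1)\left(\frac{1}{2}-\frac{1}{r}\right)=(a+1)\cdot\frac{1}{a+3}=\frac{a+1}{a+3},
\end{equation*}
so the pair $(r,r)$ with $r=\frac{2(a+3)}{a+1}$ is indeed $a$-admissible in the sense of Definition~\ref{D:a}.

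Next I would record the conjugate exponent: $r'=\frac{r}{r-1}$, and since $r-1=\frac{2(a+3)-(a+1)}{a+1}=\frac{a+5}{a+1}$, we get $r'=\frac{2(a+3)}{a+5}$, as claimed in \eqref{qr}. With these two facts in hand, the estimate \eqref{striuno} of Theorem~\ref{T:GV}, read with $q=r=\frac{2(a+3)}{a+1}$ and $q'=r'=\frac{2(a+3)}{a+5}$, becomes exactly the double integral inequality \eqref{stridue0}, after writing out the $L^q_t L^r_a$ and $L^{q'}_t L^{r'}_a$ norms as iterated integrals with respect to $d\omega_a(x)\,dt$.

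There is no real obstacle here — the statement is stated as a corollary precisely because it is a verbatim instance of Theorem~\ref{T:GV}. The only point that deserves a sentence of comment is that the formal substitution $d=a+1$ turns \eqref{stridue0} into the classical Strichartz inequality \eqref{stri}, which motivates calling \eqref{qr} the \emph{diagonal} Strichartz exponent for the Bessel setting.
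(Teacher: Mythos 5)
Your proposal is correct and follows exactly the route the paper takes: Corollary \ref{T:stribest0} is obtained by specializing Theorem \ref{T:GV} to the diagonal pair $q=r=\frac{2(a+3)}{a+1}$, and your verifications that this $r$ satisfies the constraints of Definition \ref{D:a}, that $(r,r)$ obeys the admissibility relation, and that $r'=\frac{2(a+3)}{a+5}$ are all accurate. Nothing further is needed.
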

 
To state our second result, let $\nu\in \C$ be such that $\Re\nu>-1$. For a function $u:\R^+\times \R\to \C$ we introduce its \emph{Fourier-Hankel transform} of $u$, as follows:
\begin{equation}\label{FH}
\widetilde{\mathcal H}_\nu(u)(\xi,\tau) = \int_{\R} e^{-2\pi i \tau t} \mathcal H_\nu(u(\cdot,t))(\xi) dt,
\end{equation}
where we have denoted by $\mathcal H_\nu(u(\cdot,t))(\xi)$ the modified Hankel transform of $u$ with respect to the variable $x>0$, see \eqref{ht1}.
Then we have the following result.

\begin{theorem}[Restriction estimates]\label{T:restriction}
Under the assumptions of Theorem \ref{T:GV}, we have for every $F\in \snn$,
\begin{equation}\label{re1}
\left(\int_0^\infty |\widetilde{\mathcal{H}}_{\frac{a-1}2}(F)(\xi,-\frac{\xi^2}{2\pi})|^2 d\omega_a(\xi) \right)^{1/2}  \le C(a,r)\ ||F||_{L^{q'}_t L^{r'}_a}.
\end{equation}
In particular, when $q'=r'$ is given by \eqref{qr}, we obtain the following fractal version of the \emph{Tomas-Stein restriction inequality} for the half-parabola
$P = \{(\xi,\tau)\in \R^2\mid \tau = - 2\pi \xi^2,\ \xi\ge 0\}$
\begin{equation}\label{re2}
\left(\int_0^\infty |\widetilde{\mathcal{H}}_{\frac{a-1}2}(F)(\xi,-\frac{\xi^2}{2\pi})|^2 d\omega_a(\xi) \right)^{1/2}  \le C(a)\ \left(\int_{\R} \int_0^\infty |F(x,t)|^{\frac{2(a+3)}{a+5}} d\omega_a(x) dt\right)^{\frac{a+5}{2(a+3)}}.
\end{equation}
\end{theorem}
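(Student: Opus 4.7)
The plan is to derive Theorem \ref{T:restriction} from Theorem \ref{T:GV} by the standard $TT^*$ duality together with the Plancherel theorem for the modified Hankel transform. Concretely, I would define the linear map $T\varphi(x,t) = e^{it\Ba}\varphi(x)$, which by Theorem \ref{T:GV} (with $F\equiv 0$) sends $L^2_a$ boundedly into $L^q_t L^r_a$ for every $a$-admissible pair $(q,r)$ with $a\geq 0$. Taking the adjoint with respect to the natural pairings on $L^2_a$ and on the mixed-norm space, one gets a bounded operator $T^\star : L^{q'}_t L^{r'}_a \to L^2_a$ with norm equal to that of $T$. Because $\Ba$ is self-adjoint in $L^2_a$, $(e^{it\Ba})^\star = e^{-it\Ba}$, and hence
\begin{equation*}
T^\star F(x) = \int_{\R} e^{-it\Ba} F(\cdot,t)(x)\,dt.
\end{equation*}

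Next I would use the diagonalization of $\Ba$ by the modified Hankel transform $\Hm_\nu$ with $\nu = \frac{a-1}{2}$ that is established via Proposition \ref{P:cpH}: namely, $\Hm_\nu(-\Ba u)(\xi) = \xi^2 \Hm_\nu(u)(\xi)$, so that the functional calculus yields
\begin{equation*}
\Hm_\nu\bigl(e^{-it\Ba} F(\cdot,t)\bigr)(\xi) = e^{it\xi^2}\,\Hm_\nu\bigl(F(\cdot,t)\bigr)(\xi).
\end{equation*}
Integrating in $t$, interchanging the order of integration (justified for $F\in \snn$), and comparing with the definition \eqref{FH} of the Fourier-Hankel transform, one identifies
\begin{equation*}
\Hm_\nu\bigl(T^\star F\bigr)(\xi) \;=\; \int_{\R} e^{it\xi^2}\,\Hm_\nu\bigl(F(\cdot,t)\bigr)(\xi)\,dt \;=\; \widetilde{\Hm}_{\frac{a-1}{2}}(F)\Bigl(\xi,-\frac{\xi^2}{2\pi}\Bigr),
\end{equation*}
since the Fourier exponent $-2\pi i \tau t$ matches $it\xi^2$ precisely when $\tau=-\xi^2/(2\pi)$.

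Now I invoke the Plancherel identity for $\Hm_\nu$ on $L^2_a$, which is standard for $\nu = \frac{a-1}{2}>-1$ (i.e.\ $a>-1$), to obtain $\|T^\star F\|_{L^2_a} = \|\Hm_\nu(T^\star F)\|_{L^2_a}$. Combining this with the previous identity and the $T^\star$ bound furnishes
\begin{equation*}
\left(\int_0^\infty \Bigl|\widetilde{\Hm}_{\frac{a-1}{2}}(F)\bigl(\xi,-\tfrac{\xi^2}{2\pi}\bigr)\Bigr|^2 d\omega_a(\xi)\right)^{1/2} \;\le\; C(a,r)\,\|F\|_{L^{q'}_t L^{r'}_a},
\end{equation*}
which is \eqref{re1}. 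The fractal Tomas-Stein inequality \eqref{re2} is then immediate by specializing to the diagonal admissible pair $q'=r'=\frac{2(a+3)}{a+5}$ of \eqref{qr}.

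I do not anticipate a genuine obstacle: the only substantive ingredients are (i) self-adjointness of $\Ba$ and the resulting form of $(e^{it\Ba})^\star$, and (ii) the spectral formula and Plancherel theorem for $\Hm_{(a-1)/2}$, both of which are supplied by the machinery set up in Section 2 and by Proposition \ref{P:cpH}. The trickiest bookkeeping item is the constant in the phase (reconciling the factor $2\pi$ in the definition of $\widetilde{\Hm}_\nu$ with the dispersion relation $\tau = -\xi^2/(2\pi)$); this must be tracked carefully so that the restriction is indeed to the half-parabola $P$ as stated.
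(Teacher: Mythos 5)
Your argument is correct and follows essentially the same route as the paper: an $L^2_a$ bound on the adjoint operator $F\mapsto\int_\R e^{-it\Ba}F(\cdot,t)\,dt$, then the diagonalization identity \eqref{hank}, Plancherel for $\mathcal H_{\frac{a-1}{2}}$, and the identification of the Hankel transform of that adjoint with $\widetilde{\mathcal H}_{\frac{a-1}{2}}(F)\bigl(\xi,-\frac{\xi^2}{2\pi}\bigr)$. The only organizational difference is that you obtain the adjoint bound by dualizing Theorem \ref{T:GV}, whereas the paper already has it as Theorem \ref{T:striT} (proved directly from the $T_a^\star T_a$ estimate, and from which Theorem \ref{T:GV} is itself deduced); the two are equivalent by the standard $TT^\star$ lemma, so this is a harmless reordering.
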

Concerning Theorem \ref{T:restriction}, we mention that if 
\[
\Sigma = \{(\xi,\tau)\in \R^2\mid \tau = f(\xi),\ \ \xi\in [a,b]\},
\]
and we denote by 
\[
ds_a := \sqrt{1+f'(\xi)^2}\ d\omega_a(\xi)
\]
the \emph{weighted arc-length measure} on $\Sigma$, if for $M>0$ we consider the truncated parabola
\[
\Sigma_M = \{(\xi,\tau)\in \R^2\mid \tau = - 2\pi \xi^2,\ 0\le \xi\le M\},
\]
then the inequality \eqref{re2} implies the existence of a constant $C(a,M)>0$ such that the following restriction inequality holds
\begin{equation}\label{re3}
\left(\int_{\Sigma_M} |\widetilde{\mathcal{H}}_{\frac{a-1}2}(F)|^2  ds_a \right)^{1/2}  \le C(a,M)\ \left(\int_{\R} \int_0^\infty |F(x,t)|^{\frac{2(a+3)}{a+5}} d\omega_a(x) dt\right)^{\frac{a+5}{2(a+3)}}.
\end{equation}

\vskip 0.2in

Theorems \ref{T:GV} and \ref{T:restriction} and Corollary \ref{T:stribest0} cover the range $a\ge 0$. When $-1<a<0$ these results are no longer valid, but we have instead the following weighted Strichartz estimates. Hereafter, we denote by $k_1$ the weight $k_1(x) = \min\{1,x^{\frac a2}\}$ (see Definition \ref{D:mn} below, and also Section \ref{Not} for the relevant Lebesgue spaces). 

\begin{theorem}\label{T:GV2}
Let $-1<a< 0$ and for $r>2$ assume that $q$ and $q_\infty$ satisfy the conditions
\begin{equation}\label{wa}
\frac 2q = \frac 12 - \frac 1r,\ \ \ \ \ \ \ \ \frac 2{q_\infty} \le (a+1)\left(\frac 12 - \frac 1r\right).
\end{equation}
There exists $C(a,r,q_\infty)>0$ such that for every $\vf\in L^2_a$ and forcing term $F$, such that $F k_1^{1-\frac{2}{r'}}\in L^{q'\cap q'_\infty}_t L^{r'}_a$, the unique solution $u$ of the problem \eqref{cp0} is such that $u(\cdot,t) k_1^{\frac{2}{r}-1}\in L^{q+ q_\infty}_t L^{r}_a$, and satisfies the following weighted a priori estimate
\begin{equation}\label{striunoo}
||\ ||u(\cdot,t) k_1^{\frac{2}{r}-1}||_{L^{r}_a}\ ||_{L_t^{q} + L_t^{q_\infty}} \le C(a,r,q_\infty) \left(||\vf||_{L^2_a} + ||\ ||F(\cdot,t) k_1^{1-\frac{2}{r'}}||_{L^{r'}_a}\ ||_{L_t^{q'}\cap L_t^{q'_\infty}}\right).
\end{equation}
\end{theorem}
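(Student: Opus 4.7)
The plan is to run the $TT^*$/Hardy--Littlewood--Sobolev scheme used to prove Theorem \ref{T:GV}, adapted to the pointwise behavior of $S_a(x,y,t)$ in the regime $-1<a<0$, where the kernel exhibits two distinct dispersive decay rates and no longer admits an unweighted $L^1$--$L^\infty$ bound.

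First I would establish the sharp pointwise kernel bound
\[
|S_a(x,y,t)|\le C\Bigl(|t|^{-\frac{a+1}{2}}+|t|^{-\frac{1}{2}}(xy)^{-\frac{a}{2}}\Bigr),\qquad x,y>0,\ t\ne 0,
\]
by combining the small-argument series $J_\nu(z)\sim z^\nu/(2^\nu\Gamma(\nu+1))$ with the large-argument asymptotic $J_\nu(z)\sim\sqrt{2/(\pi z)}\cos(z-\nu\pi/2-\pi/4)$ applied to $\nu=(a-1)/2$ in the representation \eqref{SaS0}. The first summand is the usual dispersive factor, the second is genuinely large only when $-1<a<0$ and $xy\gg|t|$. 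Using the elementary inequality $k_1(x)\,x^{-a/2}\le 1$, immediate from $k_1=\min\{1,x^{a/2}\}$, this gives at once the weighted $L^1$--$L^\infty$ dispersive estimate
\[
\|k_1\,e^{it\Ba}(k_1 f)\|_{L^\infty}\le C\bigl(|t|^{-(a+1)/2}+|t|^{-1/2}\bigr)\,\|f\|_{L^1_a}.
\]
Stein complex interpolation of this against the unitarity identity $\|e^{it\Ba}f\|_{L^2_a}=\|f\|_{L^2_a}$ then yields, for every $r>2$, the weighted bilinear dispersive
\[
|\langle e^{it\Ba}F,G\rangle_{L^2_a}|\le C\bigl(|t|^{-(a+1)(1/2-1/r)}+|t|^{-(1/2-1/r)}\bigr)\,\|k_1^{2/r-1}F\|_{L^{r'}_a}\|k_1^{2/r-1}G\|_{L^{r'}_a},
\]
the weight $k_1^{2/r-1}$ arising as the complex interpolate of the endpoint weights $1$ and $k_1^{-1}$.

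Second, I would feed this bilinear estimate into the $TT^*$ identity applied to $T\vf(x,t):=k_1^{2/r-1}(x)\,e^{it\Ba}\vf(x)$. The algebraic identity $1-2/r'=2/r-1$ is precisely what makes the input weight on the forcing side of the theorem match the weight in the bilinear dispersive, so that $|\langle TT^*F,G\rangle_{t,x}|$ is dominated by
\[
C\int\!\!\int\bigl(|t-s|^{-2/q_\infty}+|t-s|^{-2/q}\bigr)\,\|k_1^{1-2/r'}F(\cdot,s)\|_{L^{r'}_a}\,\|k_1^{1-2/r'}G(\cdot,t)\|_{L^{r'}_a}\,ds\,dt.
\]
Applying the Hardy--Littlewood--Sobolev inequality separately to each of the two summands (valid since $(a+1)(1/2-1/r)\in(0,1)$ and $1/2-1/r\in(0,1)$ for $-1<a<0$ and $r>2$) gives, by the HLS identities $\tfrac{2}{q_\infty}=(a+1)(\tfrac 12-\tfrac 1r)$ and $\tfrac{2}{q}=\tfrac 12-\tfrac 1r$, two bilinear bounds in time, the first by $\|\tilde F\|_{L^{q'_\infty}_tL^{r'}_a}\|\tilde G\|_{L^{q'_\infty}_tL^{r'}_a}$ and the second by $\|\tilde F\|_{L^{q'}_tL^{r'}_a}\|\tilde G\|_{L^{q'}_tL^{r'}_a}$, where $\tilde F:=k_1^{1-2/r'}F$. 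Each is dominated by $\|\tilde F\|_{L^{q'\cap q'_\infty}_tL^{r'}_a}\|\tilde G\|_{L^{q'\cap q'_\infty}_tL^{r'}_a}$, which by standard sum/intersection duality is precisely dual to the sum-space norm $\|k_1^{2/r-1}u\|_{L^{q+q_\infty}_tL^r_a}$ on the solution side, and the $TT^*$ lemma concludes the homogeneous estimate. The relaxation $\tfrac{2}{q_\infty}\le(a+1)(\tfrac 12-\tfrac 1r)$ (inequality rather than equality) is obtained by interpolating the sharp HLS bound with the trivial $L^\infty_t$ bound from unitarity. The Duhamel forcing $\int_0^t e^{i(t-s)\Ba}F(\cdot,s)\,ds$ is then recovered from the symmetric ``$s,t\in\mathbb R$'' estimate by the Christ--Kiselev lemma, admissible because $r>2$ keeps us strictly away from the endpoint and yields $q,q_\infty>2$.

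The main obstacle I anticipate is not any single inequality but the simultaneous bookkeeping of the two decay rates, the two time exponents, and the single weight $k_1^{2/r-1}$. Unlike the classical Ginibre--Velo argument, which rests on a single dispersive decay, here the bilinear dispersive is a genuine sum of two terms of distinct homogeneity in $|t-s|$, and the sum--intersection structure on the time side is the precise analytic device that packages these two contributions into a single inequality. The one-line identity $k_1(x)\,x^{-a/2}\le 1$, immediate from the definition of $k_1$, is the technical lemma that forces every weighted quantity to close up consistently and is where the specific choice $k_1=\min\{1,x^{a/2}\}$ enters in an essential way.
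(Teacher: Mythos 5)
Your architecture is the one the paper actually uses: the two-regime pointwise bound on $S_a(x,y,t)$ (your single displayed bound $|S_a|\le C(|t|^{-\frac{a+1}{2}}+|t|^{-\frac12}(xy)^{-\frac a2})$ is equivalent to the three-case estimate \eqref{notsogoodone}), the weighted $L^1\to L^\infty$ dispersive estimate \eqref{weight} via $k_1(x)x^{-a/2}\le 1$, Stein--Weiss interpolation with change of weights against unitarity to get \eqref{eli2}, then $TT^\star$ with a two-exponent Hardy--Littlewood--Sobolev step organized through the spaces $L^{q+q_\infty}_t$ and $L^{q'\cap q'_\infty}_t$, and finally duality for the homogeneous part. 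The only structural differences are cosmetic: the paper splits the time kernel by $|t|\le 1$ versus $|t|\ge 1$ and invokes the generalized HLS Lemma \ref{HLS} of \cite{BG}, where you split it into its two summands and apply classical HLS twice; and the paper handles the Duhamel term by majorizing the retarded integral by the full convolution (as at the end of the proof of Theorem \ref{T:GV}) rather than by Christ--Kiselev, which sidesteps the question of whether Christ--Kiselev is available for the sum/intersection spaces.

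There is, however, one step that fails as written: your treatment of the relaxed condition $\frac{2}{q_\infty}\le(a+1)(\frac12-\frac1r)$. Once you split the kernel into the two \emph{global} powers, the bilinear HLS bound for $|t-s|^{-\gamma_2}$ with $\gamma_2=(a+1)(\frac12-\frac1r)$ holds only at the sharp exponent $\frac{2}{q_\infty}=\gamma_2$; for a strict inequality it is false by scaling, and interpolating with the unitarity bound does not repair this, since that interpolation moves you along the admissible line (changing $r$) rather than relaxing $q_\infty$ at fixed $r$. Two clean fixes: (i) follow the paper and use Lemma \ref{HLS}, where the far-field piece of the kernel is supported in $|t|\ge1$ and the inequality $\gamma_2\ge 1+\frac1{q_2}-\frac1{p_2}$ is absorbed by Young's inequality; or (ii) prove the equality case $\frac{2}{\tilde q_\infty}=\gamma_2$ and then use the elementary embeddings $L^{q'\cap q'_\infty}_t\subset L^{\tilde q'_\infty}_t$ (log-convexity, since $q'_\infty<\tilde q'_\infty<q'$) and $L^{\tilde q_\infty}_t\subset L^{q+q_\infty}_t$ (split a function by $|f|\gtrless 1$, using $q<\tilde q_\infty<q_\infty$). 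With that repair your argument is complete and matches the paper's. A last cosmetic remark: your operator $T\vf=k_1^{2/r-1}e^{it\Ba}\vf$ carries the weight with the exponent appearing in the statement of the theorem, whereas the chain of estimates (as in \eqref{eli2} and the paper's own proof) naturally produces the reciprocal weight $k_1^{1-2/r}$ on the solution side and $k_1^{1-2/r'}=k_1^{2/r-1}$ on the forcing side; you should make sure the output weight in your $TT^\star$ computation is $k_1^{1-2/r}$, consistent with H\"older against $k_1^{2/r-1}G$.
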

We explicitly note that \eqref{wa} and $-1<a<0$ imply
\[
q_\infty \ge \frac{q}{a+1}> q.
\]
We also mention that the presence of the weight $k_1(x)$ and the Lebesgue classes $L^{q'\cap q'_\infty}_t L^{r'}_a$ and $L^{q+ q_\infty}_t L^{r}_a$ (for their description, see Section \ref{Not}) in the statement of Theorem \ref{T:GV2}, are a reflection of the anomalous behavior in the range $-1<a<0$ expressed by the dispersive estimate \eqref{weight} in Proposition \ref{P:dis}. This phenomenon is somewhat similar to that in \cite[Theorem B]{BG}. 

We also have a weighted version of the restriction Theorem \ref{T:restriction}.

\begin{theorem}\label{T:weires}
Let $-1<a< 0$ and for $r>2$ assume that $q$ and $q_\infty$ satisfy the conditions \eqref{wa}.
There exists $C(a,r,q_\infty)>0$ such that for every $F$, for which $F k_1^{1-\frac{2}{r'}}\in L^{q'\cap q'_\infty}_t L^{r'}_a$,
one has the following estimate
\begin{equation}\label{weires}
\left(\int_0^\infty |\widetilde{\mathcal{H}}_{\frac{a-1}2}(F)(\xi,-\frac{\xi^2}{2\pi})|^2 d\omega_a(\xi) \right)^{1/2}   \le C(a,r,q_\infty)  ||\ ||F(\cdot,t) k_1^{1-\frac{2}{r'}}||_{L^{r'}_a}\ ||_{L_t^{q'}\cap L_t^{q'_\infty}}.
\end{equation}
\end{theorem}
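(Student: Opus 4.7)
\emph{Strategy.} The plan is to derive \eqref{weires} from the homogeneous part of the weighted Strichartz estimate of Theorem~\ref{T:GV2} via the $TT^{*}$-duality argument that, in the classical setting, turns the Schr\"odinger Strichartz estimate into the Tomas--Stein restriction theorem. Let $U(t):=e^{it\mathcal B_{a}}$ denote the unitary propagator associated with \eqref{cp0} and set $T\vf(x,t):=U(t)\vf(x)$. By Proposition~\ref{P:cpH}, $U(t)$ is spectrally diagonalized by the modified Hankel transform $\mathcal H_{\nu}$ with $\nu=(a-1)/2$: modulo the normalization constants visible in \eqref{SaS0},
\[
U(t)\vf \;=\; \mathcal H_{\nu}\bigl(e^{-it\xi^{2}}\,\mathcal H_{\nu}\vf\bigr),
\]
and $\mathcal H_{\nu}$ is an involutive isometry on $L^{2}_{a}$.

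\emph{Execution.} Using $U(t)^{*}=U(-t)$ and interchanging $\mathcal H_{\nu}$ with the time integration,
\[
T^{*}F(y) \;=\; \int_{\R} U(-t)F(\cdot,t)(y)\,dt
\;=\; \mathcal H_{\nu}\!\left(\int_{\R} e^{it\xi^{2}}\,\mathcal H_{\nu}F(\cdot,t)(\xi)\,dt\right)(y)
\;=\; \mathcal H_{\nu}(\mathcal R F)(y),
\]
where $\mathcal R F(\xi):=\widetilde{\mathcal H}_{\nu}F(\xi,-\xi^{2}/(2\pi))$ is exactly the expression on the left-hand side of \eqref{weires}. Plancherel for the Hankel transform then gives $\|\mathcal R F\|_{L^{2}_{a}}=\|T^{*}F\|_{L^{2}_{a}}$. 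The case $F\equiv 0$ of Theorem~\ref{T:GV2} reads $T:L^{2}_{a}\to X$ with
\[
\|u\|_{X}\;:=\;\bigl\|\,\|k_{1}^{2/r-1}u(\cdot,t)\|_{L^{r}_{a}}\,\bigr\|_{L^{q}_{t}+L^{q_{\infty}}_{t}},
\]
and abstract duality delivers $T^{*}:X^{*}\to L^{2}_{a}$ with the same operator norm. Unpacking $X^{*}$ via $(L^{q}+L^{q_{\infty}})^{*}=L^{q'}\cap L^{q_{\infty}'}$ together with the algebraic identity $2/r-1=1-2/r'$ identifies its norm as $\bigl\|\,\|k_{1}^{1-2/r'}F(\cdot,t)\|_{L^{r'}_{a}}\,\bigr\|_{L^{q'}_{t}\cap L^{q'_{\infty}}_{t}}$, which combined with $\|\mathcal R F\|_{L^{2}_{a}}=\|T^{*}F\|_{L^{2}_{a}}$ is exactly \eqref{weires}.

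\emph{Main obstacle.} Virtually all the analytic effort has already been invested in Theorem~\ref{T:GV2}, in particular the anomalous weighted dispersive estimate valid only for $-1<a<0$ and the interpolation step producing the sum/intersection structure in the time variable. What remains for Theorem~\ref{T:weires} is the Hankel-theoretic identification $T^{*}F=\mathcal H_{\nu}(\mathcal R F)$, which follows from Proposition~\ref{P:cpH} and the involutivity of $\mathcal H_{\nu}$, together with the slightly finicky but elementary bookkeeping of the $k_{1}$ exponents under duality, whose key ingredient is the identity $2/r+2/r'=2$.
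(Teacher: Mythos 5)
Your proof is correct and follows essentially the same route as the paper: the paper's argument combines the weighted restriction estimate of Theorem \ref{T:striT2}, i.e. $\|T_a(F)\|_{L^2_a}\le C(a,r,q_\infty)\,\|\,\|F(\cdot,t)k_1^{1-2/r'}\|_{L^{r'}_a}\|_{L^{q'\cap q'_\infty}_t}$, with the identification \eqref{T2}--\eqref{T3} of $\mathcal H_{\frac{a-1}{2}}(T_a(F))(\xi)$ with $\widetilde{\mathcal H}_{\frac{a-1}{2}}(F)(\xi,-\xi^2/2\pi)$ and the Plancherel identity \eqref{planrad}, exactly as in the proof of Theorem \ref{T:restriction}. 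Your only deviation is that you recover the $L^2_a$ bound on $T^{*}=T_a$ by dualizing the homogeneous case of Theorem \ref{T:GV2}, which merely undoes the duality step the paper itself used to deduce Theorem \ref{T:GV2} from Theorem \ref{T:striT2}; citing Theorem \ref{T:striT2} directly would be shorter, but your bookkeeping of the $k_1$ exponents and of the $L^{q+q_\infty}$/$L^{q'\cap q'_\infty}$ duality is sound.
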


\vskip 0.2in

Finally, as a basic application of Theorem \ref{T:GV} we present a well-posedness result for the nonlinear Cauchy problem
\begin{equation}\label{nonlin}
\begin{cases}
\p_t u - i \Ba  u = \mu|u|^{p-1}u,\ \ \ \ \ x\in \R^+,\ t\in \R, 
\\
\underset{x\to 0^+}{\lim} x^a \p_x u(x,t) = 0,\ \ \ \ \ \ \ \ \ t\in \R,
\\
u(x,0) = \vf(x),
\end{cases}
\end{equation}
where $\mu\in \C$ and $p>1$. 

To state the main result we need to introduce the  Banach space in which the solutions to \eqref{nonlin} will live.  For a fixed time interval $[0,T]$ we define the space
\begin{equation}\label{space}
X_T=\{f: [0,T]\times \R \rightarrow \C \mid \|f\|_{L^q_tL^r_a}<\infty\ \text{for every pair}\ (q,r)\ \text{that is}\ a-\text{admissible}\}.
\end{equation}
If $T=\infty$, then we simply write $X$. Endowed with the norm 
\[
\|f\|_{X_T}=\sup_{(q,r)} \|f\|_{L^q_tL^r_a},
\]
the linear space $X_T$ becomes a Banach space. The reader should also see Definition \ref{D:crit} of $L^2_a$-\emph{critical} and \emph{subcritical} nonlinear Cauchy problem \eqref{nonlin}.

\begin{theorem}\label{T:main} Assume $a\ge 0$ and that the initial datum in \eqref{nonlin} is such that $\phi\in L^2_a$. Then: 
\begin{itemize}
\item[(i)] If the Cauchy problem \eqref{nonlin} is $L^2_a$-\emph{critical}, there exists $\ve_0>0$ such that if $\|\phi\|_{L^2_a}\leq \ve_0$ there exists a unique global in time solution $u$ to \eqref{nonlin}, in the sense of Definition \ref{solution},  such that $u\in X\cap C(\R_+, L^2_a)$ and the solution map $\phi\rightarrow u$ is continuous in the appropriate topology.
\item[(ii)] If the Cauchy problem \eqref{nonlin} is $L^2_a$-\emph{subcritical}, for any $\mu\in\C$ and any datum $\phi\in L^2_a$, there exist a time $T=T(\|\phi\|_{ L^2_a}) >0$, and a unique solution $u$ to \eqref{nonlin}, in the sense of Definition \ref{solution}, such that $u\in X_T\cap C([0,T], L^2_a)$, and the solution map $\phi\rightarrow u$ is continuous in the appropriate topology.
If $\mu$ is purely imaginary, then the solution $u$ is global in time, namely for any $M>0$  we have $u\in X_M\cap C([0,M], L^2_a)$.
\end{itemize}
\end{theorem}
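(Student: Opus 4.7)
The plan is to set up a Duhamel--Kato fixed-point argument in the Strichartz spaces $X_T$ (or $X$), relying on Corollary \ref{C:caze} to bound the inhomogeneous piece against the dual norm of an auxiliary admissible pair. Concretely, I would seek a fixed point of
\[
\Phi(u)(t) = e^{it\Ba}\vf + \mu \int_0^t e^{i(t-s)\Ba}|u(s)|^{p-1}u(s)\, ds
\]
inside a suitable closed ball of $X_T$ or $X$; by construction, any such fixed point is automatically a mild solution of \eqref{nonlin}.

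For the $L^2_a$-critical case (i), where $p = 1+\frac{4}{a+1}=\frac{a+5}{a+1}$, I would work with the diagonal admissible pair $q=r = \frac{2(a+3)}{a+1}$ from Corollary \ref{T:stribest0}. A direct check yields $p = r-1 = q-1$, so that $pr' = r$ and $pq' = q$, and H\"older's inequality gives
\[
\||u|^{p-1}u\|_{L^{q'}_t L^{r'}_a} = \|u\|_{L^q_t L^r_a}^p,
\]
together with the Lipschitz variant of order $(\|u\|_X+\|v\|_X)^{p-1}\|u-v\|_X$ coming from the pointwise bound $\bigl||u|^{p-1}u - |v|^{p-1}v\bigr| \lesssim (|u|^{p-1}+|v|^{p-1})|u-v|$. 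Theorem \ref{T:GV} then yields
\[
\|\Phi(u)\|_{X} \le C\|\vf\|_{L^2_a} + C\|u\|_{X}^p,
\]
so that, for $\|\vf\|_{L^2_a}\le\ve_0$ small enough, $\Phi$ is a strict contraction on a small ball $B_R\subset X$, and Banach's theorem supplies a unique global solution in $X$. The $C(\R;L^2_a)$-regularity is inherited from the unitarity of $e^{it\Ba}$ on $L^2_a$ combined with the Strichartz bound on the Duhamel integral, and continuity of the solution map $\vf\mapsto u$ follows from the Lipschitz dependence of $\Phi$ on $\vf$.

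For the subcritical case (ii), where $p<\frac{a+5}{a+1}$, I would pick an admissible pair $(q,r)$ and an auxiliary admissible pair $(\tilde q,\tilde r)$ chosen so that $p\tilde r'=r$, in order to gain a positive power of $T$ via H\"older in time on $[0,T]$. A direct computation using the admissibility relations shows that the resulting exponent $\theta = \frac{1}{\tilde q'} - \frac{p}{q}$ is independent of $r$ and equals $1 - \frac{(a+1)(p-1)}{4}$, which is strictly positive precisely when $p<1+\frac{4}{a+1}$; thus
\[
\||u|^{p-1}u\|_{L^{\tilde q'}_t([0,T])L^{\tilde r'}_a} \le T^{\theta}\,\|u\|_{L^q_t([0,T])L^r_a}^p.
\]
Corollary \ref{C:caze} then gives $\|\Phi(u)\|_{X_T}\le C\|\vf\|_{L^2_a}+CT^{\theta}\|u\|_{X_T}^p$, and choosing $T=T(\|\vf\|_{L^2_a})$ small enough makes $\Phi$ a strict contraction on a ball of $X_T$. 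When $\mu=ib$ is purely imaginary, pairing the equation with $\bar u$ in $L^2_a$, exploiting self-adjointness of $\Ba$ and $\Re(ib|u|^{p+1})=0$, formally yields the mass conservation $\|u(\cdot,t)\|_{L^2_a}=\|\vf\|_{L^2_a}$, to be made rigorous by approximating $\vf$ and the nonlinearity and passing to the limit via the Strichartz estimates; since the local time of existence depends only on $\|\vf\|_{L^2_a}$, a bootstrap extends the solution to $[0,M]$ for any $M>0$.

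The main obstacle I anticipate is the exponent bookkeeping in the subcritical case: one has to pick $r$ inside the original admissible range so that $(\tilde q,\tilde r)$ determined by $\tilde r' = r/p$ also lies in the admissible range of Definition \ref{D:a} (in particular $\tilde r>2$, i.e.\ $r<2p$, and, when $a>1$, $\tilde r<\frac{2(a+1)}{a-1}$, i.e.\ $r>\frac{2(a+1)p}{a+3}$). A secondary delicate point is making the $L^2_a$-conservation rigorous at the level of the fixed-point solution, since the formal identity $\tfrac{d}{dt}\|u\|_{L^2_a}^2 = 2\Re\langle \p_t u, u\rangle_{L^2_a}$ needs to be justified by regularization of $\vf$ and the nonlinearity and by using continuity of the flow in the Strichartz topology.
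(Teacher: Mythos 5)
Your proposal follows essentially the same route as the paper: a Kato-type contraction in the Strichartz space $X$ (resp.\ $X_T$), using in the critical case that the relevant pair $(pq',pr')$ is again $a$-admissible so the nonlinearity closes in $X$, in the subcritical case a H\"older-in-time gain of $T^{\theta}$ with $\theta=1-\tfrac{(a+1)(p-1)}{4}>0$ obtained by shifting to an auxiliary admissible pair, and mass conservation to globalize when $\mu$ is purely imaginary. The only differences are cosmetic (you anchor the critical case to the diagonal pair and move the dual pair via Corollary \ref{C:caze}, whereas the paper keeps $(q',r')$ fixed and verifies admissibility of $(pq',pr')$ directly), so the proposal is correct and matches the paper's argument.
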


The organization of the paper is as follows. In Section \ref{S:solve} we prove Proposition \ref{P:rep}. In Proposition \ref{P:cpH} we highlight the link between the Cauchy problem \eqref{cp0} and the modified Hankel transform \eqref{ht1}. Section \ref{S:distri} is devoted to proving two key results for this paper, the dispersive estimates in Propositions \ref{P:dis} and \ref{P:disp}. These results highlight the different behavior of the unitary group $e^{i t \Ba}$ in the regimes $-1<a<0$ and $a\ge 0$. In Section \ref{S:stri} we prove Theorems \ref{T:GV} and \ref{T:restriction}. In Section \ref{S:weighted} we prove Theorems \ref{T:GV2} and \ref{T:weires}. In Section \ref{S:wp}
 we present an application of the above results to global and local in time  well-posedness for the nonlinear Cauchy problem when the right-hand side in \eqref{cp0} is given by $F = \mu |u|^{p-1} u$, with $\mu\in \C$ and $p>1$. Our main result in this direction is Theorem \ref{T:main}. Section \ref{S:comp} is devoted to a comparison of the Cauchy problem \eqref{cp0} with that for two other Schr\"odinger equations on the half-line: the Kimura type model \eqref{Lb} and the Hardy type operator \eqref{Hm}. The paper closes with an appendix (Section \ref{S:app}) which collects some known material used in the rest of the work.

\subsection{Notation}\label{Not} We adopt the brief notation $L^{q_1+q_2}$ to indicate the
Banach space $L^{q_{1}} + L^{q_{2}}$ of measurable functions $f$ such that $f = f_{1} + f_{2}$, with $f_{1}\in L^{q_{1}}$ and
$f_{2} \in L^{q_{2}}$, endowed with the norm
\begin{align*}
\|f\|_{L^{q_{1}+q_{2}}}=\inf_{f=f_{1}+f_{2},f_{1}\in L^{q_{1}},f_{2} \in L^{q_{2}}}\|f_{1}\|_{L^{q_{1}}}+\|f_{2}\|_{L^{q_{2}}}.
\end{align*}
We also use the short notation $L^{p\cap q}$ to denote the Banach space $L^{p}\cap L^{q}$, endowed with the norm
\begin{align*}
\|f\|_{L^{p\cap q}}=\|f\|_{L^{p}}+\|f\|_{L^{q}}.
\end{align*}
We recall that $(L^{p+q})'=L^{p' \cap q'}$. We respectively indicate by
$L^{q_1+q_2}_t L^r_a$ and $L^{q_1\cap q_2}_t L^r_a$ the space of measurable functions $u:\R^+\times \R\to \overline \C$ such that
\[
||\ ||u(\cdot,t)||_{L^r_a}\ ||_{L_t^{q_1+q_2}}<\infty,\ \ \ \ \ \ \ ||\ ||u(\cdot,t)||_{L^r_a}\ ||_{L_t^{q_1\cap q_2}}<\infty.
\]

\vskip 0.2in

\noindent \textbf{Acknowledgement:} We thank F. Buseghin, C. Epstein, F. Gesztesy, H. Kovar\'ik, R. Mazzeo, C. Muscalu, W. Schlag and N. Visciglia for stimulating discussions, and for some bibliographical material.   G.S. also acknowledges the support of the NSF grants  DMS-2052651 and 
DMS-2306378,  and of  the Simons Foundation
Collaboration Grant on Wave Turbulence.

\vskip 0.2in



\section{The unitary group $e^{i t \Ba}$ and the Hankel transform}\label{S:solve} 

In this section we provide for completeness a purely analytical derivation of the kernel \eqref{SaS0}. We also unravel the deep connection between the group $e^{i t \Ba}$ and the modified Hankel transform $\mathcal H_{\frac{a-1}2}$. We begin by providing a formal derivation of \eqref{SaS0} based on the heat equation. Assume that $a> - 1$, and consider the kernel of the heat semigroup $e^{-t \Ba}$, subject to the Neumann condition \eqref{neu}
\begin{align}\label{heatsg}
p_a(x,y,t) & = (2t)^{-\frac{a+1}2} \left(\frac{xy}{2t}\right)^{\frac{1-a}2} I_{\frac{a-1}2}\left(\frac{xy}{2t}\right) e^{-\frac{x^2+y^2}{4t}},\ \ \ \ x, y, t>0,
\end{align}
where $I_\nu(z)$ denotes the modified Bessel function of the first kind, see \cite[(5.7.1), p.108]{Le}. 
The kernel \eqref{heatsg} is well-known, see \cite[p. 60]{IM}\footnote{Their normalization of the operator \eqref{Ba} differs from ours by a factor of $1/2$. Also, these authors assume that $a = d-1$ with $d\in \N$ since they base their probabilistic derivation on the well-known identity valid for any $z>0$ and $\omega \in \mathbb S^{d-1}$,
\[
\int_{\mathbb S^{d-1}} \exp \left\{z\sa\omega,x\da\right\} d\sigma(x) =  \G(d/2) \left(\frac z2\right)^{1-\frac d2} I_{\frac d2 - 1}(z).
\]}
\cite[Ex.6, p. 238]{KT}, and also \cite[Sec. 2]{BHNV}. For an analytical derivation that covers the range $a>-1$, see \cite[Prop. 22.3]{Gft}. A different analytical proof of \eqref{heatsg} can be obtained by appealing to the construction of the heat kernel $e^{-t \mathscr L_b}$ in \cite{EM} of the Kimura model, see \cite[Prop. 5.1]{Gams} and Section \ref{EM} for more details. Denote now by $J_\nu(z)$ the Bessel function of the first kind and order $\nu$, see \eqref{besseries}. If with $\nu = \frac{a-1}2\in (-1,\infty)$, we use the well-known relation (see \cite[(5.7.4), p.109]{Le}),
\[
I_\nu(z) = e^{-i\frac{\pi \nu}2} J_\nu(e^{i\frac{\pi}2} z),\ \ \ \ \ \ -\pi<\arg z< \frac{\pi}2,
\]
which gives
\[
I_{\frac{a-1}2}(\frac{xy}{2 i t}) = e^{i\frac{\pi(1-a)}4} J_{\frac{a-1}2}(\frac{xy}{2t}),
\]
we obtain from \eqref{heatsg} 
\[
p_a(x,y,it) = \frac{e^{-i \frac{(a+1) \pi}{4}}}{(2 t)^{\frac{a+1}2}}  \left(\frac{xy}{2 t}\right)^{\frac{1-a}2}J_{\frac{a-1}2}(\frac{xy}{2 t}) e^{i \frac{x^2+y^2}{4t}},
\]
which is the kernel $S_a(x,y,t)$ in \eqref{SaS0} for $t>0$. The expression of the kernel for $t<0$ is then found by invoking \cite[Lemma 4.1]{HMMS}.

Despite its obvious appeal, this quick presentation does not shed any light on the role of the Hankel transform in the analysis of the differential operator $\Ba$, as such connection is hidden in the derivation of \eqref{heatsg}. 
For this reason, in what follows we provide a self-contained proof of \eqref{SaS0}. In what follows we denote
\begin{equation}\label{gnu}
G_\nu(z) = z^{-\nu} J_\nu(z).
\end{equation}
Given $\nu\in \C$ with $\Re \nu>-1$, we define the \emph{modified Hankel transform} of a function $\vf:\R^+\to \overline \C$ as
\begin{equation}\label{ht1}
\mathcal H_\nu(\vf)(x) = \int_0^\infty \vf(y) G_\nu(xy) y^{2\nu+1} dy = x^{-\nu}\int_0^\infty \vf(y) J_\nu(xy) y^{\nu+1} dy,\ \ \ \ \Re \nu>-1,
\end{equation}
see \cite{MS}. We emphasize that \eqref{ht1} is related to the classical Hankel transform $H_\nu$ in 
\cite{Ha} by the formula
\begin{equation}\label{ht11}
\mathcal H_\nu(\vf)(x) = x^{-(\nu+\frac 12)} H_\nu((\cdot)^{\nu + \frac 12} \vf)(x),
\end{equation}
see also \cite{Wa, Ze, Ze2}.
The identity \eqref{ht11} is intimately connected to the transformations \eqref{uv}, \eqref{remove} below that link the Bessel operator $\Ba$ (for this, one must take $a = 2\nu+1$) to its quantum mechanics alter ego \eqref{Hm}. The Hankel transform \eqref{ht1} can be thought of as a fractal version of the Fourier transform of a function $f:\Rd\to \overline \C$, $d\in \N$,
\[
\hat f(\xi) = \int_{\Rd} e^{-2\pi i\sa\xi,w\da} f(w) dw.
\]
If $f(w)=F(|w|)$ is spherically symmetric, and if $z = |w|$, then Bochner's formula in \cite[Theor.40, p.69]{BC} gives
\begin{equation}\label{FB}
\hat f(\xi)=2\pi|\xi|^{-\frac{d}2 +1}\int^\infty_0  F(z)J_{\frac{d}2-1}
(2\pi|\xi|z) z^{\frac{d}2} dz.
\end{equation}
Comparing \eqref{ht1} with \eqref{FB} we see that   
\begin{equation}\label{hf}
\hat f(\xi) = (2\pi)^{\frac{a+1}2} \mathcal H_{\frac{a-1}2}(F)(2\pi |\xi|),\ \ \ \ \ a = d-1.
\end{equation}
In view of the equations \eqref{delta} and \eqref{hf}, it is clear that the modified Hankel transform \eqref{ht1} plays for the Bessel operator $\Ba$ the same role of the Fourier transform for the Laplacian.

 A fundamental property of $\mathcal H_\nu$ is the Plancherel type identity 
\begin{align}\label{planrad}
& \int_0^\infty |\vf(x)|^2 x^{2\nu+1} dx = \int_0^\infty |\mathcal H_{\nu}(\vf)(x)|^2 x^{2\nu+1} dx,\ \ \ \ \ \Re\nu>-1,
\end{align}
valid for any function $\vf\in \sn$, see for instance \cite[Lemma 2.7 \& p.125]{BS}.
From \eqref{planrad} and polarization, we obtain the following Parseval identity for $\vf, \psi\in \sn$
\begin{equation}\label{pol}
\int_0^\infty \vf(x) \overline{\psi(x)} x^{2\nu+1} dx = \int_0^\infty \mathcal H_{\nu}(\vf)(x)\overline{\mathcal H_{\nu}(\psi)(x)} x^{2\nu+1} dx.
\end{equation}
In his work \cite{Ha}, Hankel proved the inversion formula for his transform. For the modified Hankel transform \eqref{ht1} there exists a related inversion formula, see \cite{Sch, Sch2} for $\nu>-1/2$. A nice alternative proof is contained in \cite[Theor.1.1]{Co}. Although the authors state it for the range $\Re\nu > -1/2$, a careful examination shows that their proof is actually valid in the range $\Re\nu>-1$: we provide some details  in the proof of the next theorem for completeness. We mention that a different proof of Theorem \ref{T:hankel} can be found in \cite[Lemma 2.7 \& Sec. 4]{BS}.

\begin{theorem}\label{T:hankel}
Let $\Re\nu>-1$. Then for any $\vf\in \sn$ 
one has for $x> 0$
\[
\vf(x) = \mathcal H_{\nu}(\mathcal H_{\nu}(\vf))(x).
\]
\end{theorem}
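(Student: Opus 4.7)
My plan is to prove the inversion identity by a heat-semigroup regularization that turns the formally divergent iterated integral $\mathcal H_\nu\mathcal H_\nu(\vf)(x)$ into an honest convolution with the kernel $p_a$ of \eqref{heatsg}, for $a = 2\nu+1$. Specifically, for $\vf\in\sn$ and $t>0$ I would define
\begin{equation*}
\Phi_t(x) \;:=\; \int_0^\infty e^{-ty^2}\,\mathcal H_\nu(\vf)(y)\,G_\nu(xy)\, y^{2\nu+1}\,dy,
\end{equation*}
and then establish the two independent limits $\Phi_t(x)\to \mathcal H_\nu\mathcal H_\nu(\vf)(x)$ and $\Phi_t(x)\to \vf(x)$ as $t\to 0^+$; combining these gives the claim.

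The core computation is to identify $\Phi_t$ with heat propagation. First I would verify, as an independent lemma, that $\mathcal H_\nu$ maps $\sn$ continuously into itself; this follows from integrations by parts exploiting the eigenfunction relation $(\Ba)_x G_\nu(xy) = -y^2 G_\nu(xy)$ together with the self-adjointness identity \eqref{pos}, which allows one to trade powers of $y$ on the Hankel side for repeated applications of $\frac{1}{x}\partial_x$ on the physical side. Granting this, $\mathcal H_\nu(\vf)\in\sn$ has rapid decay, the Gaussian factor makes Fubini legal, and
\begin{equation*}
\Phi_t(x) \;=\; \int_0^\infty \vf(z)\,z^{2\nu+1}\,K_t(x,z)\,dz, \qquad K_t(x,z) \;:=\; \int_0^\infty e^{-ty^2} G_\nu(xy)\,G_\nu(yz)\, y^{2\nu+1}\,dy.
\end{equation*}
Factoring out $(xz)^{-\nu}$ from $G_\nu$ and invoking the Hankel--Nicholson (Weber--Schafheitlin) evaluation
\begin{equation*}
\int_0^\infty e^{-ty^2} J_\nu(xy)\,J_\nu(yz)\, y\, dy \;=\; \frac{1}{2t}\, e^{-(x^2+z^2)/(4t)}\, I_\nu\!\left(\tfrac{xz}{2t}\right), \qquad \Re\nu>-1,
\end{equation*}
yields exactly $K_t(x,z)=p_a(x,z,t)$ with $a=2\nu+1$. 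Thus $\Phi_t(x) = e^{-t\Ba}\vf(x)$.

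To conclude, I would take $t\to 0^+$ on both sides. On the one hand, since $\mathcal H_\nu(\vf)\in\sn$, the integrand defining $\Phi_t$ is dominated uniformly in $t$ by $|\mathcal H_\nu(\vf)(y)|\cdot |G_\nu(xy)|\, y^{2\nu+1}$, which is integrable in $y$ on $(0,\infty)$ (using that $G_\nu$ is continuous at $0$ and decays like $y^{-\nu-1/2}$ at infinity while $\nu>-1$), so dominated convergence gives $\Phi_t(x)\to \mathcal H_\nu\mathcal H_\nu(\vf)(x)$. On the other hand, $e^{-t\Ba}\vf(x)\to\vf(x)$ pointwise: by the stochastic completeness $\int_0^\infty p_a(x,z,t)\,d\omega_a(z)=1$ (cited from \cite[Prop.~2.3]{Gams}), this reduces to a standard approximation-of-identity argument using the scaling \eqref{scaleinv} for $p_a$ and the continuity of $\vf$.

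The principal obstacle is the mapping property $\mathcal H_\nu:\sn\to\sn$, whose standard proof (via differentiation under the integral using $\partial_x G_\nu(xy)=-y^2 x\, G_{\nu+1}(xy)$ and the recursion relations for $G_\nu$) must be carried out uniformly in the borderline range $-1<\Re\nu\le -1/2$, where naive estimates on $J_\nu$ fail near the origin; the remedy is to work throughout with $G_\nu$ rather than $J_\nu$ since $G_\nu$ is an entire function of its argument and thus smooth at $0$, while the weight $y^{2\nu+1}$ remains locally integrable. Finally, for complex $\nu$ with $\Re\nu>-1$ the identity follows from the real case by analytic continuation, as both sides are holomorphic in $\nu$ once $\vf\in\sn$ is fixed.
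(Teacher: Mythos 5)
Your argument is correct and is in substance the same Gaussian-summability proof that the paper invokes by citing \cite{Co}: you regularize the iterated transform with $e^{-ty^2}$, identify the resulting kernel with the heat kernel $p_a$ via Weber's second exponential integral (of which the paper's key input \eqref{han}, the fixed-point property of the Gaussian under $\mathcal H_\nu$, is the $z\to 0$ specialization), and let $t\to 0^+$ using the approximate-identity property of $p_a$. The one technical point you flag, the mapping property $\mathcal H_\nu:\sn\to\sn$ needed to justify dominated convergence (and indeed to make sense of the iterated transform at all), is likewise left implicit in the paper's own two-line proof, so your proposal is not less complete than the original.
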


\begin{proof}
In \cite{Co} the authors observe that, if $f(y) = e^{-\frac{y^2}2}$, then for $\Re\nu>-1/2$ the Hankel transform of $f$ is the function itself. In fact, such result follows from Hankel's generalization of Weber's integral which is valid for $\Re\nu>-1$, and for $|\arg p|< \frac{\pi}4$, see \cite[(10), p.469]{Ha} and also \cite[(4), p.394]{Wa}:
\begin{equation}\label{han}
\int_0^\infty e^{-p^2 y^2} J_\nu(xy) y^{\nu+1} dy = \frac{x^\nu}{(2 p^2)^{\nu+1}} \exp\left(-\frac{x^2}{4p^2}\right).
\end{equation} 
Comparing with \eqref{ht1}, the choice $p^2 = 1/2$ in formula \eqref{han} gives in particular
\begin{equation}\label{hanexp}
\mathcal H_\nu(e^{-(\cdot)^2/2})(x) =  e^{-x^2/2},\ \ \ \ \ \ \ \Re\nu>-1.
\end{equation}
With \eqref{hanexp} in hands, one can now repeat the proof in \cite{Co} to reach the desired conclusion.

\end{proof}

\vskip 0.2in

 The following result plays a critical role in the analysis of the Bessel operator \eqref{Ba}. It shows that, for the modified Hankel transform \eqref{ht1}, the degenerate operator $\Ba$ actually looks like a constant coefficient second order operator. For its proof we refer the reader to e.g. \cite[Sec. 22, Lemmas 22.1, 22.2]{Gft}.

\begin{theorem}\label{T:sq}
Let $\Re\nu>-1$, and $\vf\in \sn$. Then
\[
\mathcal H_{\nu}(\mathcal B_{2\nu+1} \vf)(x)= - x^2\  \mathcal H_{\nu}(\vf)(x).
\]
\end{theorem}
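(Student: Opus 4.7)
The plan is to recognize that, for each fixed $x>0$, the function $y\mapsto G_\nu(xy)$ is a generalized eigenfunction of $\mathcal B_{2\nu+1}$ with eigenvalue $-x^2$, and then transfer the operator $\mathcal B_{2\nu+1}$ onto this kernel inside the defining integral of $\mathcal H_\nu$ via two integrations by parts. I would organize the argument into three steps: establishing the eigenfunction identity, executing the integration by parts, and justifying that the boundary contributions vanish.

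\emph{Step 1: Eigenfunction identity.} With $a = 2\nu+1$ and $z = xy$, a direct computation in the $y$ variable gives
\[
\mathcal B_a\bigl[y\mapsto G_\nu(xy)\bigr](y) = x^2\Bigl[G_\nu''(z) + \tfrac{a}{z}G_\nu'(z)\Bigr]_{z=xy}.
\]
Expanding $G_\nu(z) = z^{-\nu}J_\nu(z)$ and using that $a - 2\nu = 1$ and $\nu(\nu+1) - \nu a = -\nu^2$, the bracket collapses to $z^{-\nu}\bigl[J_\nu''(z) + \tfrac{1}{z}J_\nu'(z) - \tfrac{\nu^2}{z^2}J_\nu(z)\bigr]$. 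The Bessel ODE for $J_\nu$ identifies this with $-z^{-\nu}J_\nu(z) = -G_\nu(z)$, yielding
\[
\mathcal B_a\bigl[y\mapsto G_\nu(xy)\bigr](y) = -x^2\, G_\nu(xy).
\]

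\emph{Step 2: Integration by parts.} Rewriting $\mathcal B_a$ in the self-adjoint form $y^{-a}\partial_y(y^a \partial_y\cdot)$ and pairing against the weight $y^{2\nu+1} = y^a$ appearing in \eqref{ht1}, I would compute
\[
\mathcal H_\nu(\mathcal B_a\vf)(x) = \int_0^\infty \partial_y\bigl(y^a\partial_y\vf(y)\bigr)\,G_\nu(xy)\,dy
\]
and integrate by parts twice, shifting both derivatives from $\vf$ onto the kernel $G_\nu(xy)$. The resulting integrand reassembles as $\vf(y)\,\mathcal B_a[G_\nu(x\cdot)](y)\,y^a$. Substituting Step~1 then converts this into $-x^2\,\mathcal H_\nu(\vf)(x)$, which is the claimed identity.

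\emph{Step 3: Boundary terms.} The delicate part is showing that all boundary contributions at $y=0^+$ and $y=+\infty$ vanish. At infinity, $\vf\in\sn$ decays faster than any inverse power while $G_\nu(xy)$ and $G_\nu'(xy)$ grow at most polynomially, so these terms trivially disappear. At the origin, the power series of $J_\nu$ (see \eqref{besseries}) gives $G_\nu(xy) = \tfrac{1}{2^\nu\Gamma(\nu+1)} + O(y^2)$ and $G_\nu'(xy) = O(y)$, while the seminorm bound $|\partial_y\vf(y)|\le \gamma_{0,1}(\vf)\,y$ built into $\sn$ implies $y^a\partial_y\vf(y) = O(y^{a+2})$. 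Since $\Re(a+2) = \Re(2\nu+3) > 1$ in the admissible range $\Re\nu>-1$, both boundary brackets $[y^a\partial_y\vf\cdot G_\nu(xy)]_0^\infty$ and $[y^a\vf\cdot xG_\nu'(xy)]_0^\infty$ vanish. This is precisely the mechanism by which the Neumann condition \eqref{neu} encoded in $\sn$ does the work.

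The main obstacle I anticipate is this boundary analysis at $y=0^+$ throughout the full range $\Re\nu>-1$: near the endpoint $\nu=-1$ the kernel $G_\nu$ and its derivative are only barely controlled, and the weight $y^a$ degenerates toward $y^{-1}$, so one must rely on the specific seminorm $\gamma_{0,1}$ defining $\sn$ rather than on any generic smoothness or compact support of $\vf$. Once this is dispatched, the eigenfunction identity of Step 1 is algebraic and the calculation closes cleanly.
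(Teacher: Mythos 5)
Your proof is correct, but it is worth noting that the paper itself does not prove Theorem \ref{T:sq}: it defers to \cite[Sec. 22, Lemmas 22.1, 22.2]{Gft} and offers only the formal heuristic based on Bochner's formula \eqref{FB}, which identifies $\mathcal H_{\frac{a-1}{2}}$ with the Fourier transform of radial functions and hence, strictly speaking, only ``explains'' the identity for integer dimensions $a=d-1$. Your argument is the genuine self-contained proof valid for all $\Re\nu>-1$: the eigenfunction computation in Step 1 is exactly right (the Bessel ODE \eqref{besseleq} collapses the bracket to $-G_\nu$), the double integration by parts in the self-adjoint form $y^{-a}\p_y(y^a\p_y\cdot)$ is the standard mechanism, and your Step 3 correctly isolates where the hypothesis $\Re\nu>-1$ and the structure of $\mathscr S^+$ enter. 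One small arithmetic slip: from $|\p_y\vf(y)|\le\gamma_{0,1}(\vf)\,y$ you get $y^a\p_y\vf(y)=O(y^{a+1})$, not $O(y^{a+2})$, and likewise the second bracket $y^a\vf\cdot xG_\nu'(xy)=O(y^{a+1})$; the relevant condition is therefore $\Re(a+1)=2\Re\nu+2>0$, which is precisely $\Re\nu>-1$ --- so the range you need is exactly the sharp one, and the conclusion is unaffected. (At infinity you should also note that $G_\nu(z),G_\nu'(z)=O(z^{-\Re\nu-\frac12})$ by \eqref{jnuinfty}, so polynomial growth only occurs for $\Re\nu<-\tfrac12$ and is in any case absorbed by the rapid decay of $\vf$ and $\p_y\vf$ encoded in the seminorms $\gamma_{m,k}$.) Compared with the paper's treatment, your route has the advantage of making visible why the Neumann-type behavior built into $\mathscr S^+$ is what kills the boundary terms at the origin, which the formal Fourier-transform argument hides entirely.
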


A formal way of ``seeing" Theorem \ref{T:sq} is to take $a = 2\nu+1$, so that $\nu = \frac{a-1}2$. Keeping \eqref{delta} in mind, if $f(w) = F(x)$, with $x = |w|$, and $F\in C_0^2(\overline{\R^+})$, we have
\[
\Delta_w f(w) = \Ba F(x).
\]
We thus formally obtain
\[
\widehat{\Ba F}(x) = \widehat{\Delta_w f}(w) = - 4\pi^2 |w|^2 \hat f(w).
\]
Applying \eqref{hf} to both sides of this identity, we find
\[
\mathcal H_{\frac{a-1}2}(\Ba F)(2\pi x) = - 4\pi^2 x^2 \mathcal H_{\frac{a-1}2}(F)(2\pi x) = - (2\pi x)^2 \mathcal H_{\frac{a-1}2}(F)(2\pi x),
\]
which provides the intuition behind Theorem \ref{T:sq}. We will also need the following result, see formulas \cite[6.729 (1 \& 2), p.758]{GR}. For our purposes it is critical that \eqref{b2} is valid for the range $\Re\nu>-1$.
\begin{lemma}\label{L:beauty}
Let $t, x, y>0$, and let $\Re\nu>-2$. Then
\begin{align}\label{b1}
& \int_0^\infty z \sin(t z^2) J_\nu(yz) J_\nu(xz) dz = \frac{1}{2t} \cos\left(\frac{x^2+y^2}{4t} - \frac{\nu \pi}{2}\right) J_\nu(\frac{xy}{2t}).
\end{align}
If instead $t, x, y>0$, and $\Re\nu>-1$, then
\begin{align}\label{b2}
& \int_0^\infty z \cos(t z^2) J_\nu(yz) J_\nu(xz) dz = \frac{1}{2t} \sin\left(\frac{x^2+y^2}{4t} - \frac{\nu \pi}{2}\right) J_\nu(\frac{xy}{2t}).
\end{align}
\end{lemma}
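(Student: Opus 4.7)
\textbf{Proof proposal for Lemma \ref{L:beauty}.} The plan is to derive both identities simultaneously by computing a single complex integral and then separating real and imaginary parts. Specifically, I would start from \emph{Weber's second exponential integral}, valid for $\Re p^2 > 0$ and $\Re\nu > -1$:
\begin{equation*}
\int_0^\infty z\, e^{-p^2 z^2}\, J_\nu(yz)\, J_\nu(xz)\, dz \;=\; \frac{1}{2p^2}\, \exp\!\left(-\frac{x^2+y^2}{4p^2}\right) I_\nu\!\left(\frac{xy}{2p^2}\right).
\end{equation*}
This is itself provable by expanding one copy of $J_\nu$ in its power series \eqref{besseries}, interchanging sum and integral (justified by Fubini since $\Re p^2 > 0$), applying the single-Bessel identity \eqref{han} to each resulting term, and recognizing the emerging series as that of $I_\nu$.

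The next step is to set $p^2 = \epsilon + it$ with $\epsilon > 0$ and pass to the limit $\epsilon \to 0^+$. Formally the right-hand side tends to $-\frac{i}{2t}\, e^{i(x^2+y^2)/(4t)}\, I_\nu(-i xy/(2t))$. Using the well-known continuation identity $I_\nu(-iw) = e^{-i\pi\nu/2} J_\nu(w)$ for $w > 0$ (a direct analogue of the relation cited just after \eqref{heatsg}), this equals
\begin{equation*}
-\frac{i}{2t}\, e^{i(\frac{x^2+y^2}{4t}-\frac{\nu\pi}{2})}\, J_\nu\!\left(\frac{xy}{2t}\right) = \frac{1}{2t}\Bigl[\sin\!\bigl(\tfrac{x^2+y^2}{4t}-\tfrac{\nu\pi}{2}\bigr) - i\cos\!\bigl(\tfrac{x^2+y^2}{4t}-\tfrac{\nu\pi}{2}\bigr)\Bigr] J_\nu\!\left(\frac{xy}{2t}\right).
\end{equation*}
Since $e^{-(\epsilon+it)z^2} = e^{-\epsilon z^2}(\cos(tz^2) - i\sin(tz^2))$, equating real and imaginary parts once the limit is taken will yield precisely \eqref{b1} and \eqref{b2}, with the sign conventions working out as stated.

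The main obstacle is the justification of the limit $\epsilon \to 0^+$ on the left-hand side, since for $\epsilon = 0$ the integrand is only conditionally convergent. The behavior at the origin is easy: using $J_\nu(yz) J_\nu(xz) = O(z^{2\nu})$ as $z \to 0^+$, the factor $z$ gives integrability near $0$ exactly when $\Re\nu > -1$, and the improvement to $\Re\nu > -2$ in \eqref{b1} comes from the extra factor $\sin(tz^2) = O(z^2)$. The delicate part is convergence and uniformity at infinity: I would split the range of integration at $z=1$, handle $[0,1]$ by dominated convergence (uniform bound $|e^{-(\epsilon+it)z^2}| \le 1$), and on $[1,\infty)$ use the Hankel asymptotics
\begin{equation*}
J_\nu(yz) J_\nu(xz) = \frac{1}{\pi z \sqrt{xy}}\Bigl[\cos((y-x)z) + \cos((y+x)z - \nu\pi - \tfrac{\pi}{2})\Bigr] + O(z^{-2}),
\end{equation*}
to reduce matters to classical Fresnel-type oscillatory integrals of the form $\int_1^\infty e^{-(\epsilon+it)z^2} e^{i\alpha z}\, dz$. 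These are estimated by a single integration by parts in $z$, giving bounds uniform in $\epsilon \ge 0$, which legitimates the interchange of limit and integration and completes the proof.
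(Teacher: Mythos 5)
The paper offers no proof of Lemma \ref{L:beauty} at all: it simply quotes the two identities from Gradshteyn--Ryzhik \cite[6.729 (1 \& 2), p.758]{GR}. Your proposal therefore supplies a genuinely different (and more informative) route, namely a self-contained derivation from Weber's second exponential integral via the boundary limit $p^2=\epsilon+it\to it$ and the continuation $I_\nu(-iw)=e^{-i\pi\nu/2}J_\nu(w)$. This is the classical way such formulas are obtained, and your bookkeeping is right: the limiting right-hand side $-\tfrac{i}{2t}e^{i\theta}J_\nu(\tfrac{xy}{2t})=\tfrac{1}{2t}[\sin\theta-i\cos\theta]J_\nu(\tfrac{xy}{2t})$ with $\theta=\tfrac{x^2+y^2}{4t}-\tfrac{\nu\pi}{2}$ matches \eqref{b1}--\eqref{b2} exactly. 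What your approach buys is transparency about where the hypotheses on $\Re\nu$ come from and a template the authors could reuse; what the paper's approach buys is brevity, since it only ever needs $\nu=\tfrac{a-1}{2}>-1$.

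Three points need attention before your argument is complete. First, ``equating real and imaginary parts'' is only legitimate for real $\nu$, since for complex $\nu$ the product $J_\nu(yz)J_\nu(xz)$ is not real-valued; the clean fix is to run the same limit with $p^2=\epsilon-it$ as well (still $\Re p^2>0$) and take the half-sum and half-difference of the two resulting identities, which isolates the $\cos(tz^2)$ and $\sin(tz^2)$ integrals for every $\nu$ with $\Re\nu>-1$. Second, as written you only obtain \eqref{b1} for $\Re\nu>-1$, because Weber's integral is unavailable below that threshold; the observation that the integrand of \eqref{b1} is $O(z^{3+2\Re\nu})$ near the origin shows \emph{convergence} for $\Re\nu>-2$ but does not extend the \emph{identity}. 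You need one more sentence: both sides of \eqref{b1} are analytic in $\nu$ on $\{\Re\nu>-2\}$ (locally uniform convergence, using the same oscillatory bounds at infinity), so the identity propagates by analytic continuation from $\Re\nu>-1$. Third, a tail subtlety: after multiplying by $z$, the $O(z^{-2})$ remainder in your product asymptotics is only $O(z^{-1})$, which is not absolutely integrable on $[1,\infty)$, so it cannot be handled by dominated convergence; it needs the same single integration by parts against $\tfrac{d}{dz}e^{-(\epsilon+it)z^2}$ that you apply to the Fresnel main terms, which works because the remainder has derivative $O(z^{-3})$ and $|\epsilon+it|^{-1}\le t^{-1}$ uniformly in $\epsilon\ge 0$. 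All three issues are repairable without changing your strategy, and none affects the paper's application, which uses the lemma only for $\nu=\tfrac{a-1}{2}>-1$.
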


  With Theorem \ref{T:hankel}, Theorem \ref{T:sq} and Lemma \ref{L:beauty} in hands, we can finally turn to the

\begin{proof}[Proof of Proposition \ref{P:rep}]
To solve the Cauchy problem \eqref{cp0}, we first assume that $F\equiv 0$ and, with $\nu = \frac{a-1}2\in (-1,\infty)$, we apply the Hankel transform $\mathcal H_\nu$ in the variable $x>0$. Using Theorem \ref{T:sq}, we find that \eqref{cp0} is converted into the following one
\begin{equation}\label{cpB2}
\begin{cases}
\frac{\p\mathcal H_\nu(u)}{\p t}(x,t) + i x^2 \mathcal H_\nu(u)(x,t) = 0,\ \ \ \ \ \ \ \ \ \ \ x>0, t\in \R,
\\
\mathcal H_\nu(u)(x,0) = \mathcal H_\nu(\vf)(x).
\end{cases}
\end{equation}
For any fixed $x>0$,  the problem \eqref{cpB2} is solved by
\begin{equation}\label{y}
\mathcal H_\nu(u)(x,t) = e^{-i t x^2} \mathcal H_\nu(\vf)(x).
\end{equation}
Applying Theorem \ref{T:hankel} to \eqref{y}, we obtain
\begin{equation}\label{u}
u(x,t) = \mathcal H_\nu(e^{-i t (\cdot)^2} \mathcal H_\nu(\vf))(x).
\end{equation}
Using \eqref{ht1} we find
\begin{equation}\label{solB}
\mathcal H_\nu(e^{-i t (\cdot)^2} \mathcal H_\nu(\vf))(x) = 
x^{-\nu} \int_0^\infty \vf(y) y^{\nu+1} \left(\int_0^\infty z e^{-i t z^2} J_\nu(yz) J_\nu(xz) dz\right) dy
\end{equation}
To compute the inner integral in \eqref{solB} we suppose first that $t>0$, and write
\begin{align*}
& \int_0^\infty z e^{-i t z^2} J_\nu(yz) J_\nu(xz) dz = \int_0^\infty z \cos(t z^2) J_\nu(yz) J_\nu(xz) dz
 - i \int_0^\infty z \sin(t z^2) J_\nu(yz) J_\nu(xz) dz.
\end{align*}
The integrals in the right-hand side can be computed using Lemma \ref{L:beauty}. 
Since $\nu > -1$, and $\sin \theta - i \cos\theta = - i [\cos\theta + i \sin \theta]$, we obtain 
\begin{align}\label{inint}
& \int_0^\infty z e^{-i t z^2} J_\nu(yz) J_\nu(xz) dz = - \frac{i}{2t} J_\nu(\frac{xy}{2t}) e^{i \left(\frac{x^2+y^2}{4t} - \frac{\nu \pi}{2}\right)}
\\
& = - \frac{i}{2|t|} J_\nu(\frac{xy}{2|t|}) e^{i \frac{x^2+y^2}{4|t|}} e^{- i \frac{\nu \pi}{2}} .
\notag
\end{align}
If instead $t<0$, then \eqref{u} becomes
\begin{equation}\label{uu}
u(x,t) = \mathcal H_\nu(e^{i |t| (\cdot)^2} \mathcal H_\nu(\vf))(x),
\end{equation}
and we are thus led to compute the integral
\begin{align*}
& \int_0^\infty z e^{i |t| z^2} J_\nu(yz) J_\nu(xz) dz = \int_0^\infty z \cos(|t| z^2) J_\nu(yz) J_\nu(xz) dz
 + i \int_0^\infty z \sin(|t| z^2) J_\nu(yz) J_\nu(xz) dz.
\end{align*}
Lemma \ref{L:beauty} gives in this case
\begin{align}\label{uubar}
& \int_0^\infty z e^{i |t| z^2} J_\nu(yz) J_\nu(xz) dz =  \frac{1}{2|t|} \sin\left(\frac{x^2+y^2}{4|t|} - \frac{\nu \pi}{2}\right) J_\nu(\frac{xy}{2|t|})
\\
& + i \frac{1}{2|t|} \cos\left(\frac{x^2+y^2}{4|t|} - \frac{\nu \pi}{2}\right) J_\nu(\frac{xy}{2|t|}) =   \frac{i}{2|t|} J_\nu(\frac{xy}{2|t|}) e^{- i \left(\frac{x^2+y^2}{4|t|} - \frac{\nu \pi}{2}\right)}.
\notag
\end{align}
Recalling that $a = 2\nu+1$, so that $\nu = \frac{a-1}2$ and $\nu+1 = \frac{a+1}2$, and substituting \eqref{inint}, \eqref{uubar} in \eqref{solB}, \eqref{u}, we finally reach the conclusion that, when $F\equiv 0$, the problem \eqref{cp0} is uniquely solved by the function 
\begin{equation}\label{solBB}
u(x,t) = S_a(t)\vf(x) \overset{def}{=} \int_0^\infty S_a(x,y,t) \vf(y)  d\omega_a(y),\ \ \ \ \ x\ge 0,\ \  t\in\R,
\end{equation}
where $S_a(x,y,t)$ is given by \eqref{SaS0}. The non-homogeneous case $F\not= 0$ follows now from \eqref{solBB} via Duhamel's principle.

\end{proof}

Comparing the expression \eqref{SaS0} with the definition \eqref{ht1} of the Hankel transform, we immediately obtain the following result.

\begin{proposition}\label{P:cpH}
Let $a>-1$. Given $\vf\in \sn$, denote by $S_a(t)\vf(x)$ the unique solution to the homogeneous Cauchy problem \eqref{cp0}. Then for any $t>0$ we have
\[
S_a(t)\vf(x) = e^{-i \frac{(a+1) \pi}{4}} e^{i \frac{x^2}{4t}} \mathcal H_{\frac{a-1}2}(e^{i \frac{(\cdot)^2}{4t}} \vf)(\frac{x}{2t}).
\]
\end{proposition}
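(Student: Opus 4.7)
The plan is to deduce the identity by a direct algebraic comparison between the fundamental-solution formula of Proposition \ref{P:rep} and the definition \eqref{ht1} of the modified Hankel transform—essentially, by isolating from $S_a(x,y,t)$ the pieces that depend on $y$ and recognizing the resulting $y$-integral as an instance of $\mathcal{H}_{(a-1)/2}$.

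Concretely, I would start with $F\equiv 0$ in the representation \eqref{kernel}, substitute the $t>0$ branch of \eqref{SaS0}, and write
\[
S_a(t)\vf(x) = e^{-i\frac{(a+1)\pi}{4}}\, e^{i\frac{x^2}{4t}} \int_0^\infty \frac{1}{(2t)^{\frac{a+1}{2}}}\left(\tfrac{xy}{2t}\right)^{\frac{1-a}{2}} J_{\frac{a-1}{2}}\!\left(\tfrac{xy}{2t}\right) e^{i\frac{y^2}{4t}}\vf(y)\, y^a\, dy,
\]
having pulled out the factors $e^{-i(a+1)\pi/4}$ and $e^{ix^2/(4t)}$ which are constant in $y$. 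Then I would set $\nu = \frac{a-1}{2}$ so that $2\nu+1 = a$ and use the identity $G_\nu(z) = z^{-\nu}J_\nu(z) = z^{(1-a)/2}J_{(a-1)/2}(z)$. With the choice $z = \frac{x}{2t}\cdot y$ this gives
\[
\left(\tfrac{xy}{2t}\right)^{\frac{1-a}{2}} J_{\frac{a-1}{2}}\!\left(\tfrac{xy}{2t}\right) = G_{\frac{a-1}{2}}\!\left(\tfrac{x}{2t}\, y\right),
\]
and comparing with \eqref{ht1} applied to the function $\psi(y) = e^{iy^2/(4t)}\vf(y)$ evaluated at $\xi = x/(2t)$, the $y$-integral is exactly $\mathcal{H}_{(a-1)/2}(e^{i(\cdot)^2/(4t)}\vf)(x/(2t))$. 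Collecting the prefactors yields the claimed identity.

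The only item to check is legitimacy of the manipulation, which is immediate since $\vf \in \sn$ is rapidly decaying while the kernel $G_\nu(\xi y)\, y^{a}$ is locally bounded near $y = 0$ (because $J_\nu(r)r^{-\nu}$ is analytic at the origin for $\Re\nu>-1$) and decays as $y^{(a-1)/2}$ at infinity from the standard asymptotic $|J_\nu(r)| \lesssim r^{-1/2}$, so the integrand is in $L^1(y^a\, dy)$ and the rearrangement of factors poses no issue. There is no real obstacle; the proposition is essentially a repackaging of \eqref{SaS0}.
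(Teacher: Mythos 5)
Your approach is the same as the paper's: the proposition is meant to follow by directly comparing \eqref{SaS0} with the definition \eqref{ht1}, and your intermediate displayed formula, the identification $\left(\tfrac{xy}{2t}\right)^{\frac{1-a}{2}}J_{\frac{a-1}{2}}\!\left(\tfrac{xy}{2t}\right)=G_{\frac{a-1}{2}}\!\left(\tfrac{x}{2t}y\right)$, and the integrability check are all correct. The problem is the final sentence: the prefactors do \emph{not} collect to the claimed identity. After factoring out $e^{-i(a+1)\pi/4}$ and $e^{ix^2/(4t)}$ and recognizing
\[
\int_0^\infty \left(\tfrac{xy}{2t}\right)^{\frac{1-a}{2}} J_{\frac{a-1}{2}}\!\left(\tfrac{xy}{2t}\right) e^{i\frac{y^2}{4t}}\vf(y)\, y^a\, dy \;=\; \mathcal H_{\frac{a-1}{2}}\bigl(e^{i(\cdot)^2/(4t)}\vf\bigr)\bigl(\tfrac{x}{2t}\bigr),
\]
you are still left with the factor $(2t)^{-\frac{a+1}{2}}$ in front, and it cannot be absorbed into the Hankel transform (rescaling the argument of $\mathcal H_\nu$ would alter the evaluation point $x/(2t)$, not produce this power). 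What your computation actually proves is
\[
S_a(t)\vf(x) \;=\; \frac{e^{-i \frac{(a+1) \pi}{4}}}{(2t)^{\frac{a+1}{2}}}\, e^{i \frac{x^2}{4t}}\, \mathcal H_{\frac{a-1}2}\bigl(e^{i \frac{(\cdot)^2}{4t}} \vf\bigr)\bigl(\tfrac{x}{2t}\bigr),
\]
which differs from the statement by $(2t)^{-\frac{a+1}{2}}$.

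This corrected version is the right one, and the statement as printed is missing the power of $2t$: for $a=0$ the corrected formula reproduces, via \eqref{jmeno12} and $(2t)^{-1/2}e^{-i\pi/4}=(2it)^{-1/2}$, the intermediate expression in the proof of Proposition \ref{P:classic}, and it is the only version compatible with the scaling property (ii) of $S_a$ (the right-hand side of the printed identity scales like $\la^{a+1}$ relative to the left-hand side under $x\mapsto \la x$, $t\mapsto \la^2 t$). So your derivation is sound and in fact exposes a typo in the proposition, but asserting that ``collecting the prefactors yields the claimed identity'' silently drops a nonconstant factor; you should have either carried it into the final formula or flagged the discrepancy with the statement.
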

Proposition \ref{P:cpH} shows that, up to rescaling and multiplication by the unimodular function $e^{i \frac{x^2}{4t}}$, the solution to the Cauchy problem \eqref{cp0} with initial datum $\vf$ \emph{is} the Hankel transform of $\vf$. This is akin to the well-known connection between Fourier transform and the solution to the homogeneous Cauchy problem for the Schr\"odinger equation.  We next prove property (iii) following Proposition \ref{P:rep}. 

\begin{proposition}\label{P:1}
Let $-1<a<2$. For every $x\in \R$ and $t\in \R$ one has
\[
\int_0^\infty S_a(x,y,t) d\omega_a(y) = 1.
\]
\end{proposition}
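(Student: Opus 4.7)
\textit{Proof sketch.} The plan is to compute $\int_0^\infty S_a(x,y,t)\,d\omega_a(y)$ directly, reducing it via a change of variable to Weber's classical integral formula \eqref{han}.

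First I would reduce to the case $x, t > 0$. Since \eqref{SaS0} yields $S_a(x,y,-t) = \overline{S_a(x,y,t)}$ for $t > 0$, the case $t < 0$ follows by conjugation (as $\bar 1 = 1$). The case $x = 0$ is handled separately using \eqref{pazero} together with the Fresnel evaluation $\int_0^\infty e^{\pm iy^2/(4|t|)} y^a dy = 2^a |t|^{(a+1)/2} \Gamma(\tfrac{a+1}{2}) e^{\pm i(a+1)\pi/4}$ (valid for $-1 < a < 3$); the phases and prefactors then cancel cleanly against those in \eqref{pazero} to give $1$.

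For $x, t > 0$, substituting \eqref{SaS0} into the integral and applying the change of variable $u = xy/(2t)$ would produce
\begin{equation*}
\int_0^\infty S_a(x,y,t)\, y^a\, dy \;=\; \frac{e^{-i(a+1)\pi/4}\, e^{ix^2/(4t)}}{(2t)^{(a+1)/2}} \left(\frac{2t}{x}\right)^{a+1}\!\int_0^\infty u^{(a+1)/2} J_{(a-1)/2}(u)\, e^{itu^2/x^2}\, du.
\end{equation*}
The inner integral matches the left-hand side of \eqref{han} with $\nu = (a-1)/2$, $\xi = 1$, and $p^2 = -it/x^2$. The condition $a > -1$ gives $\Re\nu > -1$; however $|\arg p| = \pi/4$ lies exactly on the boundary of the region $|\arg p| < \pi/4$ in which \eqref{han} is stated. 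To handle this I would apply \eqref{han} with the regularization $p_\varepsilon^2 = \varepsilon - it/x^2$ ($\varepsilon > 0$) and then pass to the limit $\varepsilon \to 0^+$. The right-hand side of \eqref{han} extends continuously in $\varepsilon$; the left-hand side converges to the improper Riemann integral in question precisely under $-1 < a < 2$. Convergence near the origin is supplied by $J_\nu(u) \sim c_\nu u^\nu$, giving integrand $O(u^a)$, integrable iff $a > -1$; convergence at infinity is obtained via one integration by parts against $e^{itu^2/x^2}$, which—using $\tfrac{d}{du}(u^\nu J_\nu(u)) = u^\nu J_{\nu-1}(u)$ and $J_\nu(u) = O(u^{-1/2})$—produces a boundary term of order $u^{a/2-1}$ and a remaining integrand of order $u^{a/2-2}$, both controlled exactly when $a < 2$.

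Evaluating \eqref{han} at $p^2 = -it/x^2$ (using $-i = e^{-i\pi/2}$ to unwind the fractional powers) gives
\begin{equation*}
\int_0^\infty u^{(a+1)/2} J_{(a-1)/2}(u)\, e^{itu^2/x^2}\, du \;=\; \left(\frac{x^2}{2t}\right)^{(a+1)/2} e^{i(a+1)\pi/4}\, e^{-ix^2/(4t)}.
\end{equation*}
Substituting this back into the previous display, the powers of $x$ and $2t$ collapse against the prefactor $(2t/x)^{a+1}/(2t)^{(a+1)/2}$, while the phases $e^{\pm i(a+1)\pi/4}$ and $e^{\pm ix^2/(4t)}$ cancel in pairs, leaving the value $1$. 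The main technical obstacle is the passage to the boundary $|\arg p| = \pi/4$ in Weber's formula; the hypothesis $-1 < a < 2$ is exactly what keeps the resulting oscillatory improper integral convergent in this limit, and it would fail as an ordinary improper integral once $a \geq 2$.
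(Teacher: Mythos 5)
Your computation is correct and reaches the same reduction as the paper: after inserting \eqref{SaS0} and rescaling, everything hinges on evaluating $\int_0^\infty u^{\nu+1}J_\nu(u)\,e^{icu^2}\,du$ with $\nu=\frac{a-1}{2}$ and $c>0$. Where you genuinely differ is in the key lemma. The paper simply quotes the two real oscillatory Weber integrals \cite[6.728.5--6.728.6]{GR}, whose stated validity range $-1<\Re\nu<\tfrac12$ is exactly the source of the hypothesis $-1<a<2$. You instead start from the Gaussian Weber integral \eqref{han} and push $p^2$ to the boundary ray $\arg p^2=-\pi/2$ (i.e.\ $|\arg p|=\pi/4$) by regularizing with $p_\ve^2=\ve-it/x^2$ and letting $\ve\to 0^+$. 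Your route is more self-contained --- it in effect proves the Gradshteyn--Ryzhik formulas rather than citing them, and it makes visible why $a<2$ is the correct threshold (it is what makes the boundary terms at infinity vanish) --- at the cost of having to justify the Abel-type limit. Two small corrections to that justification. First, a single integration by parts (writing $u e^{icu^2}=\frac{1}{2ic}\frac{d}{du}e^{icu^2}$ and using $\frac{d}{du}\left(u^\nu J_\nu(u)\right)=u^\nu J_{\nu-1}(u)$) leaves a remainder of size $u^{\nu-1/2}=u^{a/2-1}$, which is absolutely integrable at infinity only for $a<0$; for $0\le a<2$ you need a second integration by parts to reach the claimed $O(u^{a/2-2})$. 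The boundary terms produced at each step still tend to zero precisely when $a<2$, so the conclusion is unaffected. Second, in your treatment of $x=0$, the Fresnel integral $\int_0^\infty e^{\pm i y^2/(4|t|)}y^a\,dy$ converges as an improper Riemann integral only for $-1<a<1$, not $-1<a<3$; for $a\ge 1$ it again requires Abel regularization. This affects only the single point $x=0$, which the paper's own proof (carried out for $b=x/\sqrt{t}>0$) does not treat either.
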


\begin{proof}
We only consider the case $t>0$. We have from \eqref{SaS0}
\begin{align*}
& \int_0^\infty S_a(x,y,t) d\omega_a(y) = \frac{e^{-i \frac{(a+1) \pi}{4}}}{(2t)^{\frac{a+1}2}} e^{i \frac{x^2}{4t}} \int_0^\infty \left(\frac{xy}{2t}\right)^{\frac{1-a}2}J_{\frac{a-1}2}(\frac{xy}{2t}) e^{i \frac{y^2}{4t}} y^{a+1} \frac{dy}y
\\
& = 2^{\frac{a+1}2} e^{-i \frac{(a+1) \pi}{4}} e^{i \frac{x^2}{4t}} \left( \frac{x}{\sqrt t} \right)^{\frac{1-a}2} \int_0^\infty z^{\frac{a+1}2}J_{\frac{a-1}2}(\frac{x}{\sqrt t} z) e^{i z^2}  dz.
\end{align*}
The proof will be finished if we show that
\begin{equation}\label{2}
\int_0^\infty z^{\frac{a+1}2}J_{\frac{a-1}2}(\frac{x}{\sqrt t} z) e^{i z^2}  dz = 2^{-\frac{a+1}2} e^{i \frac{(a+1) \pi}{4}} e^{-i \frac{x^2}{4t}} \left(\frac{x}{\sqrt t} \right)^{\frac{a-1}2}.
\end{equation}
The identity \eqref{2} follows by applying with $\nu = \frac{a-1}2$, $\alpha = 1$ and $b = t^{-1/2} x$, the following equations, which can be found in e.g. \cite[6.728.5 \& 6.728.6, p.739]{GR}: for $\alpha, b>0$ one has
\[
\int_0^\infty z^{\nu+1} J_\nu(bz) \sin(\alpha z^2) dz = \frac{b^\nu}{2^{\nu+1} \alpha^{\nu+1}} \cos\left(\frac{b^2}{4\alpha} - \frac{\pi \nu}2\right),\ \ \ \ -2<\Re \nu < \frac 12,
\]
and
\[
\int_0^\infty z^{\nu+1} J_\nu(bz) \cos(a z^2) dz = \frac{b^\nu}{2^{\nu+1} a^{\nu+1}} \sin\left(\frac{b^2}{4\alpha} - \frac{\pi \nu}2\right),\ \ \ \ -1<\Re \nu < \frac 12,
\]

\end{proof} 

\vskip 0.2in

We close this section by noting that the case $a=0$ of \eqref{SaS0} is special. In fact in the next proposition we use  formula \eqref{SaS0} to recover the well known expression of the solution to the  linear Schr\"odinger equation via the Schr\"odinger group. 

\begin{proposition}\label{P:classic}
Let $a = 0$, and $\vf\in \mathscr S(\R)$ and even. Then the solution of \eqref{cp0}, with $F\equiv 0$, is given by 
\begin{equation}\label{classical}
u(x,t) = S_0(t) \vf(x) =  (4\pi i t)^{-1/2} \int_{\R} e^{i \frac{(x-y)^2}{4t}} \vf(y) dy,
\end{equation}
the unique (even in $x$) solution of the Cauchy problem on the line $\R$
\[
\p_t u - i \p_{xx} u = 0, \ \ \ \ \ \ u(x,0) = \vf(x).
\]
\end{proposition}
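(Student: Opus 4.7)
The plan is to reduce the kernel $S_0(x,y,t)$ given by \eqref{SaS0} to the classical Schrödinger kernel on $\R$ by exploiting the explicit elementary form of $J_{-1/2}$, and then to use the evenness of $\vf$ to fold the half-line integral into an integral over $\R$.

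First, I would specialize \eqref{SaS0} to $a=0$ (so $\nu = \tfrac{a-1}{2} = -\tfrac{1}{2}$) and use the classical identity $J_{-1/2}(z) = \sqrt{2/(\pi z)}\cos z$. A direct computation gives
\[
\left(\frac{xy}{2t}\right)^{1/2} J_{-1/2}\!\left(\frac{xy}{2t}\right) = \sqrt{\frac{2}{\pi}}\,\cos\!\left(\frac{xy}{2t}\right),
\]
so that, for $t>0$,
\[
S_0(x,y,t) = \frac{e^{-i\pi/4}}{\sqrt{\pi t}}\,\cos\!\left(\frac{xy}{2t}\right) e^{i\frac{x^2+y^2}{4t}}.
\]
Since $d\omega_0(y)=dy$, Proposition \ref{P:rep} with $F\equiv 0$ yields
\[
u(x,t) = \frac{e^{-i\pi/4}}{\sqrt{\pi t}}\int_0^\infty \cos\!\left(\frac{xy}{2t}\right) e^{i\frac{x^2+y^2}{4t}}\vf(y)\,dy.
\]

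Next, I would unfold this to $\R$. Writing $\cos(xy/(2t)) = \tfrac{1}{2}(e^{ixy/(2t)}+e^{-ixy/(2t)})$ and factoring $e^{i(x^2+y^2)/(4t)}$, one recognizes
\[
2\cos\!\left(\frac{xy}{2t}\right) e^{i\frac{x^2+y^2}{4t}} = e^{i\frac{(x-y)^2}{4t}} + e^{i\frac{(x+y)^2}{4t}}.
\]
Since $\vf$ is even, the change of variable $y\mapsto -y$ gives
\[
\int_0^\infty e^{i\frac{(x+y)^2}{4t}}\vf(y)\,dy = \int_{-\infty}^0 e^{i\frac{(x-y)^2}{4t}}\vf(y)\,dy,
\]
and combining these
\[
\int_0^\infty \left[e^{i\frac{(x-y)^2}{4t}} + e^{i\frac{(x+y)^2}{4t}}\right]\vf(y)\,dy = \int_\R e^{i\frac{(x-y)^2}{4t}}\vf(y)\,dy.
\]
Using $(4\pi i t)^{-1/2} = \tfrac{1}{2\sqrt{\pi t}}e^{-i\pi/4}$, this proves \eqref{classical} for $t>0$. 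The case $t<0$ is identical, using the second branch of \eqref{SaS0} and complex conjugation of the relevant exponentials. Finally, uniqueness of the even-in-$x$ solution on $\R$ follows from the standard theory for the free Schrödinger equation (e.g.\ by Fourier transform), together with the fact that the integrand in \eqref{classical} is even in $y$, hence the resulting $u(\cdot,t)$ is even for every $t$.

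The only non-routine step is the Bessel simplification: everything else is elementary manipulation. Once one sees that $J_{-1/2}$ collapses to a cosine, the proof reduces to the trigonometric identity turning $\cos(xy/(2t))\,e^{i(x^2+y^2)/(4t)}$ into a symmetric sum of the translated Gaussian-phase factors $e^{i(x\mp y)^2/(4t)}$, at which point the parity of $\vf$ does the rest.
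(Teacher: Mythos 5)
Your proposal is correct and follows essentially the same route as the paper's own proof: specialize \eqref{SaS0} to $a=0$, collapse $J_{-1/2}$ to a cosine via \eqref{jmeno12}, rewrite $2\cos(\tfrac{xy}{2t})e^{i\frac{x^2+y^2}{4t}}$ as $e^{i\frac{(x-y)^2}{4t}}+e^{i\frac{(x+y)^2}{4t}}$, and use the evenness of $\vf$ to unfold the half-line integral to $\R$. The constants check out, so nothing further is needed.
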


\begin{proof}
 To recognise \eqref{classical}, we set $a=0$ in \eqref{SaS0}, obtaining for $x\ge 0$ and $t\in \R$,
\begin{align*}
& S_0(t) \vf(x) = \frac{e^{- i \frac{\pi}{4}}}{(2t)^{\frac{1}2}}  \int_0^\infty \left(\frac{xy}{2t}\right)^{\frac{1}2}J_{-\frac{1}2}(\frac{xy}{2t}) e^{i \frac{x^2+y^2}{4t}} \vf(y) dy.
\end{align*}
Keeping in mind that
\begin{equation}\label{jmeno12}
J_{-\frac12}(z)=\sqrt{\frac2{\pi z}}\cos z, \end{equation}
see e.g. \cite[(5.8.2), p.111]{Le},
we find
\begin{align*}
& S_0(t) \vf(x) = \frac{e^{- i \frac{\pi}{4}}}{(2t)^{\frac{1}2}}  \int_0^\infty \left(\frac{xy}{2t}\right)^{\frac{1}2}\sqrt{\frac2{\pi (\frac{xy}{2t})}}\cos (\frac{xy}{2t}) e^{i \frac{x^2+y^2}{4t}} \vf(y) dy
\\
& = \sqrt{\frac2{\pi}}\frac{1}{(2i t)^{\frac{1}2}}  \int_0^\infty \cos (\frac{xy}{2t}) e^{i \frac{x^2+y^2}{4t}} \vf(y) dy.
\end{align*}
We now write
\begin{align*}
& \cos (\frac{xy}{2t}) e^{i \frac{x^2+y^2}{4t}} = \frac 12 \left[e^{i \frac{2xy}{4t}} + e^{-i \frac{2xy}{4t}}\right]e^{i \frac{x^2+y^2}{4t}} 
\\
& = \frac 12 \left[e^{i \frac{(x-y)^2}{4t}} + e^{i \frac{(x+y)^2}{4t}}\right],
\end{align*}
obtaining
\begin{align*}
& S_0(t) \vf(x) = \frac{1}{(4\pi i t)^{\frac{1}2}} \left[\int_0^\infty e^{i \frac{(x-y)^2}{4t}}  \vf(y) dy + \int_0^\infty e^{i \frac{(x+y)^2}{4t}}  \vf(y) dy\right]
\\
& =  \frac{1}{(4\pi i t)^{\frac{1}2}} \left[\int_0^\infty e^{i \frac{(x-y)^2}{4t}}  \vf(y) dy + \int_{-\infty}^0 e^{i \frac{(x-y)^2}{4t}}  \vf(-y) dy\right]
\\
& = (4\pi i t)^{-1/2} \int_{\R} e^{i \frac{(x-y)^2}{4t}} \vf(y) dy,
\end{align*}
the last equality being justified by the fact that $\vf$ is even on $\R$.

\end{proof}

We now consider the problem
\begin{equation}\label{cp1}
\begin{cases}
\p_t v - i \Ba v = G(x,t),\ \ \ \ \ x\in \R^+,\ t\in \R,\ \ a >-1, 
\\
\underset{x\to 0^+}{\lim} x^a \p_x v(x,t) = \Phi(t),\ \ \ \ t\in \R,
\\
v(x,0) = \vf(x),
\end{cases}
\end{equation}
for which we assume that $\Phi(0) = 0$. To solve \eqref{cp1}, for a given solution $v$, we consider the function
\begin{equation}\label{v}
u(x,t) = v(x,t) - \frac{x^{1-a}}{1-a} \Phi(t).
\end{equation}
One easily checks that the function \eqref{v} satisfies the problem \eqref{cp0} with 
\[
F(x,t) = G(x,t) - \frac{x^{1-a}}{1-a} \Phi'(t).
\]
If we assume that $\vf\in \sn$ and that $G$ is such that $F\in \snn$, then applying Proposition \ref{P:rep} to $u$, we can represent $v$ by the formula
\begin{align}\label{kernel2}
v(x,t) & = \frac{x^{1-a}}{1-a} \Phi(t) +  \int_0^\infty S_a(x,y,t) \vf(y) d\omega_a(y) \\
& + \int_0^t \int_0^\infty S_a(x,y,t-s) F(y,s) d\omega_a(y) ds.
\notag
\end{align}

\section{Dispersive estimates}\label{S:distri}

The main result of this section is the dispersive estimate in Proposition \ref{P:dis}. The latter shows that the regimes $-1<a<0$ and $a\ge 0$ are dramatically different, an aspect that will be reflected in the Strichartz estimates in Section \ref{S:stri}. Using the kernel \eqref{SaS0}, for every $t\in \R$ we define a linear operator $S_a(t)$ according to the rule 
\begin{equation}\label{Tstar}
S_a(t)\vf(x) =  
\int_0^\infty S_a(x,y,t) \vf(y) d\omega_a(y),\ \ \ \ \ \vf\in \sn.
\end{equation}
Our first result is the following.

\begin{proposition}\label{P:uni}
For every $t\in \R$, the operator $S_a(t): L^2_a\to L^2_a$ unitarily, in the sense that for any $t\in \R$
\[
||S_a(t)\vf||_{L^2_a} = ||\vf||_{L^2_a}.
\]
\end{proposition}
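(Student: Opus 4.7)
The plan is to reduce the claim to the Plancherel identity \eqref{planrad} for the modified Hankel transform $\mathcal H_\nu$ with $\nu = \frac{a-1}{2}$, exploiting the representation of $S_a(t)\vf$ already derived in the proof of Proposition \ref{P:rep}.

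First I would recall that, for $\vf \in \sn$, equations \eqref{y} and \eqref{u} from the proof of Proposition \ref{P:rep} give the identity
\[
S_a(t)\vf(x) = \mathcal H_\nu\!\left(e^{-it(\cdot)^2}\, \mathcal H_\nu(\vf)\right)(x),\qquad \nu = \frac{a-1}{2},
\]
valid for every $t \in \R$ and $x > 0$. Since $\vf \in \sn$ implies $\mathcal H_\nu(\vf) \in \sn$ (this is standard, and it also follows from Theorem \ref{T:hankel}, which shows that $\mathcal H_\nu$ is an involution on $\sn$), the function $e^{-it(\cdot)^2}\mathcal H_\nu(\vf)$ belongs to $L^2_a$, and applying Plancherel \eqref{planrad} (with $2\nu + 1 = a$) to the outer Hankel transform yields
\[
\|S_a(t)\vf\|_{L^2_a}^2 = \int_0^\infty \left|e^{-it x^2}\,\mathcal H_\nu(\vf)(x)\right|^2 x^{a}\, dx = \int_0^\infty |\mathcal H_\nu(\vf)(x)|^2\, d\omega_a(x),
\]
where in the last step I used $|e^{-itx^2}| = 1$. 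A second application of \eqref{planrad} gives
\[
\int_0^\infty |\mathcal H_\nu(\vf)(x)|^2\, d\omega_a(x) = \int_0^\infty |\vf(x)|^2\, d\omega_a(x) = \|\vf\|_{L^2_a}^2,
\]
which proves the isometry property on $\sn$.

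Finally, I would extend the identity $\|S_a(t)\vf\|_{L^2_a} = \|\vf\|_{L^2_a}$ from $\sn$ to all of $L^2_a$ by the standard density argument (the density of $\sn$ in $L^2_a$ is a consequence of the Plancherel identity \eqref{planrad} combined with Theorem \ref{T:hankel}), and note that surjectivity of $S_a(t)$ onto $L^2_a$ follows by observing that $S_a(-t)$ is the inverse operator. I do not anticipate a real obstacle: the whole argument is essentially a transcription, via the Hankel transform, of the fact that the Schr\"odinger multiplier $e^{-itx^2}$ on the Fourier side has modulus one. The only point requiring minor care is to ensure that the operations performed in the proof of Proposition \ref{P:rep} are legitimate on $\sn$, but this has already been established there.
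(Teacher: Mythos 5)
Your proof is correct and follows essentially the same route as the paper: both arguments rest on the multiplier identity $\mathcal H_{\frac{a-1}2}(S_a(t)\vf) = e^{-itx^2}\mathcal H_{\frac{a-1}2}(\vf)$ (equivalently, the representation \eqref{u}) together with two applications of the Plancherel identity \eqref{planrad} and the fact that $|e^{-itx^2}|=1$. The density and surjectivity remarks you append are harmless additions that the paper leaves implicit.
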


\begin{proof}
We note that, with $a = 2\nu+1$, formula \eqref{y} can be expressed as follows 
\begin{equation}\label{hank}
\mathcal H_{\frac{a-1}2}(S_a(t) \vf)(x) = e^{-i t x^2} \mathcal H_{\frac{a-1}2}(\vf)(x).
\end{equation}
Using Plancherel formula \eqref{planrad}, we thus find for every $t\in \R$ 
\begin{align}\label{uni}
& \int_0^\infty |S_a(t)\vf(x)|^2 d\omega_a(x) = \int_0^\infty |\mathcal H_{\frac{a-1}2}(S_a(t)\vf)(x)|^2 d\omega_a(x)
\\
& = \int_0^\infty |\mathcal H_{\frac{a-1}2}(\vf)(x)|^2 d\omega_a(x) = \int_0^\infty |\vf(x)|^2 d\omega_a(x),
\notag
\end{align}
where in the second equality we have used \eqref{hank}, and in the third \eqref{planrad} again. The equation \eqref{uni} proves the desired conclusion.

\end{proof}

We next want to show that $S_a(t): L^1_a\to L^\infty$, with a certain bound. We will need the following lemma.

\begin{lemma}\label{L:dis}
Let $a\ge 0$. There exists $C(a)>0$ such that 
\begin{equation}\label{goodone}
|S_a(x,y,t)| \le C(a)\ |t|^{-\frac{a+1}2},\ \ \ \ x, y\ge 0,\ t\in \R\setminus\{0\}. 
\end{equation}
If instead $-1<a<0$, then we have the following bound
\begin{equation}\label{notsogoodone}
|S_a(x,y,t)| \le \begin{cases}
C(a)\ |t|^{-\frac{a+1}2},\ \ \ \ \ \ \ \ \ \ \frac{xy}{2|t|}\le 1,
\\
C(a)\ |t|^{-\frac{1}2} (xy)^{-\frac a2},\ \ \ \ \frac{xy}{2|t|}\ge 1\ \emph{and}\ xy\ge 1,
\\
C(a)\ |t|^{-\frac{1}2},\ \ \ \ \ \ \ \ \ \ \ \ \ \frac{xy}{2|t|}\ge 1\ \emph{and}\ xy\le 1.
\end{cases} 
\end{equation}
\end{lemma}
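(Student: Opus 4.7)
My plan is to reduce the bound on $|S_a(x,y,t)|$ to a pointwise bound on the function $G_\nu(z) = z^{-\nu} J_\nu(z)$ introduced in \eqref{gnu}, with $\nu = \frac{a-1}{2}$ and $z = \frac{xy}{2|t|}$, and then invoke the standard asymptotic behavior of $J_\nu$ at the origin and at infinity. From the explicit formula \eqref{SaS0}, since the exponential and unimodular prefactors have modulus one, for $t \neq 0$ and $x,y \geq 0$ we have
\[
|S_a(x,y,t)| \;=\; (2|t|)^{-\frac{a+1}{2}}\,\left|G_{\frac{a-1}{2}}\!\left(\frac{xy}{2|t|}\right)\right|,
\]
so everything reduces to bounding $G_\nu(z)$ for $\nu = \frac{a-1}{2}$.

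I would then recall the two standard regimes for $J_\nu$ with $\nu > -1$. Near the origin, the power series gives $J_\nu(z) = \frac{z^\nu}{2^\nu \Gamma(\nu+1)}(1+O(z^2))$, so $G_\nu$ is bounded on $[0,1]$. For $z \geq 1$, the Hankel asymptotic $J_\nu(z) = \sqrt{\tfrac{2}{\pi z}}\cos(z-\frac{\nu\pi}{2}-\frac{\pi}{4})+O(z^{-3/2})$ yields $|G_\nu(z)| \leq C\, z^{-\nu-1/2}$. The sign of the exponent $-\nu-\tfrac{1}{2} = -\tfrac{a}{2}$ is exactly what distinguishes the two cases in the statement.

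When $a \geq 0$ we have $-\tfrac{a}{2} \leq 0$, so the large-$z$ bound on $|G_\nu|$ is $\leq C$; combined with boundedness near the origin, $G_{(a-1)/2}$ is globally bounded on $(0,\infty)$, which produces \eqref{goodone} at once. When $-1 < a < 0$ we have $-\tfrac{a}{2} > 0$; the bound on $G_\nu$ is uniform for $z \leq 1$ and grows like $z^{-a/2}$ for $z \geq 1$. Substituting $z=\frac{xy}{2|t|}$, the regime $\frac{xy}{2|t|} \leq 1$ produces the first line of \eqref{notsogoodone}, while $\frac{xy}{2|t|} \geq 1$ yields
\[
|S_a(x,y,t)| \;\leq\; C\, (2|t|)^{-\frac{a+1}{2}}\left(\frac{xy}{2|t|}\right)^{-a/2} \;=\; C\,|t|^{-1/2}(xy)^{-a/2}.
\]
The second and third cases of \eqref{notsogoodone} then follow by splitting according to whether $xy \geq 1$ (in which case we keep the factor $(xy)^{-a/2}$) or $xy \leq 1$ (in which case $(xy)^{-a/2} \leq 1$ because $-\tfrac{a}{2}>0$, so the factor is simply discarded).

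I do not anticipate a genuine obstacle: the whole argument is driven by elementary Bessel asymptotics, and the dichotomy $a \geq 0$ versus $-1 < a < 0$ is forced by the sign of $-\nu-\tfrac{1}{2}$. The one conceptual point worth flagging is that taking absolute values kills the cosine oscillation in the Hankel asymptotic of $J_\nu$; this cancellation is irrelevant for the pointwise dispersive estimate at hand, but will need to be recovered downstream (e.g.\ through the Hankel-transform identity of Proposition \ref{P:cpH} and a $TT^*$ argument) when deriving the Strichartz estimates of Theorems \ref{T:GV} and \ref{T:GV2}.
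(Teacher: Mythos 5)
Your proposal is correct and follows essentially the same route as the paper: both reduce the estimate to bounding $\left(\frac{xy}{2|t|}\right)^{\frac{1-a}{2}}\big|J_{\frac{a-1}{2}}(\frac{xy}{2|t|})\big|$ (which is exactly your $|G_{\frac{a-1}{2}}(z)|$ at $z=\frac{xy}{2|t|}$) via the series expansion \eqref{Js} for $z\le 1$ and the Hankel asymptotic \eqref{jnuinfty} for $z\ge 1$, with the dichotomy in $a$ governed by the sign of the resulting exponent $-\frac{a}{2}$. The splitting of the regime $\frac{xy}{2|t|}\ge 1$ according to $xy\gtrless 1$ is also handled identically.
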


\begin{proof}
From the expression \eqref{SaS0}, we find
\begin{align}\label{Sabound}
|S_a(x,y,t)| & \le \frac{1}{(2|t|)^{\frac{a+1}2}}  \left(\frac{xy}{2|t|}\right)^{\frac{1-a}2} \left|J_{-\frac{1-a}2}(\frac{xy}{2|t|})\right|.
\end{align}
In view of \eqref{Js}, when $0\le \frac{xy}{2|t|}\le 1$, we have for any $a>-1$
\[
\left(\frac{xy}{2|t|}\right)^{\frac{1-a}2} \left|J_{-\frac{1-a}2}(\frac{xy}{2|t|})\right|\le C(a),
\]
whereas when $\frac{xy}{2|t|}\ge 1$ we use the asymptotic behavior \eqref{jnuinfty}  in the appendix to infer that 
\[
\left|J_{-\frac{1-a}2}(\frac{xy}{2|t|})\right|\le C(a) \left(\frac{xy}{2|t|}\right)^{-\frac 12}.
\] 
This estimate gives
\[
\left(\frac{xy}{2|t|}\right)^{\frac{1-a}2} \left|J_{-\frac{1-a}2}(\frac{xy}{2|t|})\right|\le C(a) \left(\frac{xy}{2|t|}\right)^{-\frac{a}2}.
\]
When $a\ge 0$, it is clear that $\left(\frac{xy}{2|t|}\right)^{-\frac{a}2}\le 1$, when $\frac{xy}{2|t|}\ge 1$, and we reach the desired conclusion \eqref{goodone}. If instead $-1<a<0$ the bound \eqref{notsogoodone} follows from \eqref{Sabound} and the above asymptotic estimates.

\end{proof}

\begin{definition}\label{D:mn}
For $-1<a<0$, we define 
\[
k_1(x) = \min\{1,x^{\frac a2}\},\ \ \ \ \ \text{and}\ \ \ \ \ \ u_1(x) = \max\{x^{\frac a2},x^a\} = k_1(x)^{-1} x^a.
\]  
\end{definition}
The weight function $k_1(x)$ will play a critical role in our Strichartz estimates in the regime $-1<a<0$. We have the following basic result.

\begin{proposition}[Dispersive estimate]\label{P:dis}
Suppose that $a\ge 0$. There exists $C(a)>0$ such that for any $t\in \R\setminus\{0\}$ we have 
\begin{equation}\label{good}
||S_a(t) \vf||_{L^\infty(\R^+)}\le C(a) |t|^{-\frac{a+1}2} ||\vf||_{L^1_a}.
\end{equation}
If instead $-1<a<0$, then there exists $C(a)>0$ such that for any $t\in \R\setminus\{0\}$ we have 
\begin{equation}\label{weight}
||S_a(t)\vf\ k_1||_{L^\infty(\R^+)} \le C(a)\ \left\{|t|^{-\frac{a+1}2} + |t|^{-\frac{1}2}\right\} ||\vf\ u_1||_{L^1(\R^+)}.
\end{equation} 
\end{proposition}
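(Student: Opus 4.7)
The proposition is essentially a direct consequence of the pointwise kernel bounds recorded in Lemma~\ref{L:dis}. My plan is, in both regimes, to start from the representation
\[
S_a(t)\vf(x) \;=\; \int_0^\infty S_a(x,y,t)\,\vf(y)\,d\omega_a(y),
\]
take absolute values, and reduce the claim to a pointwise estimate on the kernel. The $a\ge 0$ case is a one-line consequence of \eqref{goodone}; the real work is to extract the correct weighted bound when $-1<a<0$.

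\textbf{Case $a\ge 0$.} Here I would just insert \eqref{goodone} inside the integral to get
\[
|S_a(t)\vf(x)| \;\le\; C(a)\,|t|^{-\frac{a+1}{2}} \int_0^\infty |\vf(y)|\,d\omega_a(y) \;=\; C(a)\,|t|^{-\frac{a+1}{2}}\,\|\vf\|_{L^1_a},
\]
and then take the supremum in $x\ge 0$. That proves \eqref{good}.

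\textbf{Case $-1<a<0$.} The key algebraic observation is that, by the very definition of $k_1$ and $u_1$, one has $u_1(y) = k_1(y)^{-1} y^a$, so that $y^a\,dy = k_1(y)\,u_1(y)\,dy$. Consequently,
\[
k_1(x)\,|S_a(t)\vf(x)| \;\le\; \int_0^\infty \Bigl(k_1(x) k_1(y)\,|S_a(x,y,t)|\Bigr)\,|\vf(y)|\,u_1(y)\,dy,
\]
and the estimate \eqref{weight} will follow once I establish the pointwise bound
\begin{equation}\label{pointclaim}
k_1(x)\,k_1(y)\,|S_a(x,y,t)| \;\le\; C(a)\,\bigl\{|t|^{-\frac{a+1}{2}} + |t|^{-\frac{1}{2}}\bigr\},\qquad x,y>0,\ t\neq 0.
\end{equation}
To prove \eqref{pointclaim} I would split into the three regimes provided by \eqref{notsogoodone} and simply use that $k_1\le 1$ always. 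When $xy\le 2|t|$, the first line of \eqref{notsogoodone} gives the $|t|^{-(a+1)/2}$ contribution directly. When $xy\ge 2|t|$ with $xy\le 1$, the third line of \eqref{notsogoodone} gives the $|t|^{-1/2}$ contribution directly. The only case needing a short subcase analysis is $xy\ge 2|t|$ with $xy\ge 1$, where the bound from Lemma~\ref{L:dis} carries a factor $(xy)^{-a/2}$ with $-a/2>0$; I would verify that $k_1(x)k_1(y)(xy)^{-a/2}\le 1$ by separating the four possibilities $x,y\lessgtr 1$: when $x,y\ge 1$ the product equals $1$ exactly; when $x\le 1\le y$ (or symmetrically) it collapses to $x^{-a/2}\le 1$ since $x\le 1$ and $-a/2>0$; the case $x,y\le 1$ is vacuous as it would force $xy\le 1 <xy$.

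\textbf{Main obstacle.} There is no analytic difficulty; the only delicate point is the weighted bookkeeping of \eqref{pointclaim} in the subcase $xy\ge 1$. The sign of $a$ makes exponents flip in a way that is easy to get backwards, so I would write out explicitly the four sub-regions $\{x\lessgtr 1\}\times\{y\lessgtr 1\}$ and check each, after which the rest of the proof is just a Minkowski-type estimate to pass from the pointwise kernel bound to the stated inequality.
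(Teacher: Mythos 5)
Your proof is correct and follows essentially the same route as the paper: the case $a\ge 0$ is read off from \eqref{goodone}, and for $-1<a<0$ both arguments split the $y$-integral according to the three regimes of Lemma \ref{L:dis} and then absorb the leftover powers $x^{-a/2}$ and $y^{a/2}$ into $k_1(x)^{-1}=\max\{1,x^{-a/2}\}$ and $u_1(y)=\max\{y^a,y^{a/2}\}$. Your symmetrized pointwise formulation $k_1(x)k_1(y)|S_a(x,y,t)|\le C(a)\{|t|^{-\frac{a+1}{2}}+|t|^{-\frac12}\}$, verified on the four sub-regions $\{x\lessgtr 1\}\times\{y\lessgtr 1\}$, is just a repackaging of the paper's bookkeeping and is carried out correctly.
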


\begin{proof}
For the regime $a\ge 0$, the estimate \eqref{good} is an immediate consequence of  \eqref{goodone} in Lemma \ref{L:dis}. When $-1<a<0$, the estimate \eqref{good} fails. To prove \eqref{weight}, 
given points $x, |t|>0$ we consider the following subsets of the half-line
\[
A_{x,t} = \{y>0\mid \frac{xy}{2|t|} \le1\},\ \ \ B_{x,t} = \{y>0\mid \frac{xy}{2|t|} >1, xy\le 1\}, \ \ \ C_{x,t} = \{y>0\mid \frac{xy}{2|t|} >1, xy> 1\}.
\]
Let $\vf\in \sn$, then \eqref{notsogoodone} in Lemma \ref{L:dis} gives
\begin{align*}
& |S_a(t)\vf(x)| \le \int_{A_{x,t}} |S_a(x,y,t)| |\vf(y)| y^a dy + \int_{B_{x,t}} |S_a(x,y,t)| |\vf(y)| y^a dy
\\
& + \int_{C_{x,t}} |S_a(x,y,t)| |\vf(y)| y^a dy
\\
& \le C(a)\ |t|^{-\frac{a+1}2} \int_{A_{x,t}} |\vf(y)| y^a dy + C(a)\ |t|^{-\frac{1}2} \int_{B_{x,t}}  |\vf(y)| y^a dy
\\
& + C(a)\ |t|^{-\frac{1}2} x^{-\frac a2} \int_{C_{x,t}} |\vf(y)| y^{\frac a2} dy
\\
& \le C(a)\ \left\{|t|^{-\frac{a+1}2} + |t|^{-\frac{1}2}\right\} \max\{1,x^{-\frac a2}\} \int_0^\infty |\vf(y)| \max\{y^a,y^{\frac a2}\} dy
\\
& = C(a)\ \left\{|t|^{-\frac{a+1}2} + |t|^{-\frac{1}2}\right\} \left(\min\{1,x^{\frac a2}\}\right)^{-1} \int_0^\infty |\vf(y)| \max\{y^a,y^{\frac a2}\} dy.
\end{align*}
Keeping Definition \ref{D:mn}
 in mind, this estimate implies \eqref{weight}.

\end{proof}

To establish the next result we will need E. Stein's generalization \cite[Theor. 2]{Eli} of the Riesz-Thorin interpolation theorem.  We recall it for the reader's convenience (for a version which allows for sublinear, instead of linear,  operators, see \cite{SW}). Let $u_i\ge 0$, $i=1,2$ be measurable functions on a measure space $M$, and $k_i\ge 0$, $i=1,2$ be measurable functions on another measure space $N$. Let $T$ be a linear operator mapping simple functions on  $M$ to measurable functions on $N$. Suppose that for $1\le p_i, q_1 \le \infty$, $i=1,2$, one has on simple functions $f$,
\[
||T f\ k_1||_{q_1} \le M_1 ||f u_1||_{p_1},\ \ \ \ \ ||T f\ k_2||_{q_2} \le M_2 ||f u_2||_{p_2}.
\]
Let $p, q$ be given by
\[
\frac 1p = \frac{1-\theta}{p_1} + \frac{\theta}{p_2},\ \ \ \ \ \ \frac 1q = \frac{1-\theta}{q_1} + \frac{\theta}{q_2},\ \ \ \ \ \ 0\le \theta\le 1,
\]
and define
\begin{equation}\label{ku}
k = k_1^{1-\theta} k_2^\theta,\ \ \ \ \ \ \ \ u = u_1^{1-\theta} u_2^\theta.
\end{equation}
Then $T$ may be uniquely extended to a linear map on functions $f$ such that $||f u||_p<\infty$, satisfying 
\begin{equation}\label{stein}
||Tf\ k||_q \le M_1^{1-\theta} M_2^\theta ||f u||_p.
\end{equation}
We also need the following consequence of Stein's theorem. Let $w\ge 0$ and consider the measure $wdx$ on $M = N$. Suppose that on simple functions $f$ one has
\[
||T f||_{L^\infty(dx)} \le M_1 ||f||_{L^1(wdx)},\ \ \ \ \ \ \ \ ||T f||_{L^2(wdx)}  \le M_2 ||f||_{L^2(wdx)}.
\]
The, for any $r\ge 2$ one has
\begin{equation}\label{stein2}
||T f||_{L^r(wdx)} \ \le M_1^{\frac{1}{r'}-\frac 1r} M_2^{\frac 2r} ||f||_{L^{r'}(wdx)}.
\end{equation} 
To prove \eqref{stein2} we apply Stein's theorem with 
\[
k_1 = 1,\ \ \ \ u_1 = w,\ \ \ \ \ k_2 = u_2 = w^{1/2}.
\]
For $r\ge 2$ we write
\[
\frac{1}{r'} = \frac{1-\theta}1 + \frac{\theta}2 = 1 - \frac{\theta}2\ \Longrightarrow\ \frac{\theta}2 = \frac 1r.
\] 
For such value of $\theta$, we have 
\[
k = w^{\frac{\theta}2} = w^{\frac 1r},\ \ \ \ \ \ u = w^{1-\theta} w^{\frac{\theta}2} = w^{1-\frac{\theta}2} = w^{\frac 1{r'}}.
\]
Then the conclusion \eqref{stein2} follows from \eqref{stein}. With these results in hands, we now prove the following crucial dispersive estimate.

\begin{proposition}\label{P:disp}
Let $r\ge 2$. For any $a\ge 0$ there exists a constant $C(a,r)>0$ such that for any $\vf\in \sn$ one has
\begin{equation}\label{eli1}
||S_a(t)\vf||_{L^r_a} \le C(a,r) |t|^{-(a+1)(\frac 12 - \frac 1r)} ||\vf||_{L^{r'}_a}.
\end{equation}
If instead $-1<a<0$, we have for some $C(a,r)>0$
\begin{equation}\label{eli2}
||S_a(t)\vf k_1^{1-\frac 2r} ||_{L^r_a} \le C(a,r) \left\{|t|^{-(a+1)(\frac 12 - \frac 1r)} + |t|^{-(\frac{1}2-\frac 1r)}\right\} ||\vf k_1^{1-\frac{2}{r'}}||_{L^{r'}_a}.
\end{equation}
\end{proposition}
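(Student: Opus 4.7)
The plan is to obtain both \eqref{eli1} and \eqref{eli2} by interpolating the $L^2_a$-unitarity of $S_a(t)$ from Proposition \ref{P:uni} against the dispersive estimate of Proposition \ref{P:dis}, using the Stein interpolation machinery just recalled in the preceding paragraphs.

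For $a\ge 0$, the argument is an immediate application of the unweighted consequence \eqref{stein2}. Taking $w(x)=x^a$, Proposition \ref{P:dis} supplies $M_1 = C(a)|t|^{-(a+1)/2}$ in the $L^1(wdx)\to L^\infty(dx)$ endpoint, and Proposition \ref{P:uni} supplies $M_2 = 1$ in the $L^2(wdx)\to L^2(wdx)$ endpoint. For $r\ge 2$ the exponents in \eqref{stein2} simplify via $1/r'-1/r = 1-2/r$, so that $M_1^{1-2/r}M_2^{2/r} = C(a,r)|t|^{-(a+1)(1/2-1/r)}$, which is precisely \eqref{eli1}.

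For $-1<a<0$ the unweighted $L^1\to L^\infty$ bound fails, and I would invoke the full weighted Stein theorem \eqref{ku}--\eqref{stein}. The endpoint data are: at $(p_1,q_1)=(1,\infty)$, the weights $u_1,k_1$ of Definition \ref{D:mn} together with the constant $M_1 = C(a)(|t|^{-(a+1)/2}+|t|^{-1/2})$ coming from \eqref{weight}; at $(p_2,q_2)=(2,2)$, I would rewrite $d\omega_a = x^a dx$ and take $u_2 = k_2 = x^{a/2}$ with $M_2=1$, so that $\|S_a(t)\vf\,k_2\|_{L^2(dx)} = \|S_a(t)\vf\|_{L^2_a} = \|\vf\|_{L^2_a} = \|\vf\,u_2\|_{L^2(dx)}$. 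For $r\ge 2$ the interpolation parameter is $\theta = 2/r$, and \eqref{ku} produces the interpolated weights
\[
k = k_1^{1-2/r}\,x^{a/r}, \qquad u = u_1^{1-2/r}\,x^{a/r}.
\]
Using $u_1 = k_1^{-1}x^a$ from Definition \ref{D:mn} and the elementary identity $-(1-2/r) = 1-2/r'$ (a restatement of $1/r + 1/r' = 1$), the input weight collapses to $u = k_1^{1-2/r'}\,x^{a/r'}$. Substituting into \eqref{stein} and rewriting the weighted $L^r(dx)$ and $L^{r'}(dx)$ norms as $L^r_a$ and $L^{r'}_a$ norms yields \eqref{eli2} with constant $M_1^{1-2/r}$, which I would bound by $C(a,r)\{|t|^{-(a+1)(1/2-1/r)} + |t|^{-(1/2-1/r)}\}$ using the subadditivity of $s\mapsto s^{1-2/r}$ on $[0,\infty)$, valid since $1-2/r\in[0,1]$ when $r\ge 2$.

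The analytic content is entirely carried by the two endpoint estimates, which are in hand. The only step requiring care is the bookkeeping of the weights in the $-1<a<0$ case: one must check that the Stein-interpolated output weight pairs with $d\omega_a$ to give exactly $k_1^{1-2/r}$, and the input weight gives $k_1^{1-2/r'}$. This is the reason Definition \ref{D:mn} is framed with the factorization $u_1 = k_1^{-1}x^a$; combined with $1-2/r = -(1-2/r')$, it is tuned precisely so that Stein interpolation produces the symmetric weighted bound announced in \eqref{eli2}.
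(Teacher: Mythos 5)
Your proposal is correct and follows essentially the same route as the paper: interpolate the $L^2_a$-unitarity of $S_a(t)$ against the dispersive bounds of Proposition \ref{P:dis} via Stein's theorem, using \eqref{stein2} with $w=x^a$ for $a\ge 0$, and the weighted version \eqref{ku}--\eqref{stein} with the endpoint weights $u_1,k_1$ and $u_2=k_2=x^{a/2}$ for $-1<a<0$. Your weight bookkeeping ($k=k_1^{1-2/r}x^{a/r}$, $u=k_1^{1-2/r'}x^{a/r'}$) matches the paper's identities \eqref{kr}--\eqref{ur}, and the final subadditivity bound on $M_1(t)^{1-2/r}$ is exactly the paper's \eqref{mt}.
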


\begin{proof}
The proof of \eqref{eli1} is an immediate consequence of \eqref{uni},  \eqref{good} and Stein's \eqref{stein2}, which we apply with $w(x) = x^a$. Since
\[
M_1 = C(a) |t|^{-\frac{a+1}2},\ \ \ \ M_2 = 1,
\]
and $\theta = \frac 2r$, we have 
\[
M_1^{1-\theta} = C(a,r) |t|^{-(a+1)(\frac 12 - \frac 1r)}.
\]
To prove \eqref{eli2} instead, we apply Stein's theorem \eqref{stein} with  $k_1(x)$ and $u_1(x)$ as in Definition \ref{D:mn}, and with  
\[
u_2(x) = k_2(x) = x^{\frac a2}.
\]
This time we have
\[
M_1(t) = C(a)\ \left\{|t|^{-\frac{a+1}2} + |t|^{-\frac{1}2}\right\},\ \ \ \ \ M_2(t) \equiv 1,
\]
see \eqref{weight} and \eqref{uni}. For every $t\not= 0$, we thus find from \eqref{stein} for every $\vf\in \sn$
\begin{equation}\label{eli3}
||S_a(t)\vf\ k||_{L^r(\R^+)} \le M_1(t)^{1-\theta} ||\vf\ u||_{L^{r'}(\R^+)},
\end{equation}
where $k$ and $u$ are as in \eqref{ku}.
Now we compute the measures $k(x)^r dx$ and $u(x)^{r'} dx$ on $\R^+$. Since $\frac{\theta}2 = \frac 1r$, we have $(1-\theta)r = r-2$, and therefore
\begin{equation}\label{kr}
k(x)^r  = k_1(x)^{(1-\theta)r} x^{r \frac a2 \theta} = k_1(x)^{r-2} x^{a}.
\end{equation}
Next, we claim that
\begin{equation}\label{ur}
u(x)^{r'} = \frac{x^a}{k_1(x)^{2-r'}}.
\end{equation}
To verify \eqref{ur} we use Definition \ref{D:mn}, which gives
\begin{align*}
u(x)^{r'} & =  u_1(x)^{r'(1-\theta)} u_2(x)^{r'\theta} = k_1(x)^{-(1-\theta)r'} x^{a(1-\theta)r'} x^{\frac a2 \theta r'}
\\
& = k_1(x)^{-(1-\theta)r'} x^{a(1-\theta)r'} x^{\frac a2 \theta r'}. 
\end{align*}
Observing that $(1-\theta)r' = 2-r'$, and that $1-\theta + \frac{\theta}2 = \frac{1}{r'}$, we obtain \eqref{ur}.
Finally, we note that $1-\theta = 2(\frac 12 - \frac 1r)$ and therefore
\begin{equation}\label{mt}
M_1(t)^{1-\theta} \le C(a,r) \left\{|t|^{-(a+1)(\frac 12 - \frac 1r)} + |t|^{-(\frac{1}2-\frac 1r)}\right\}.
\end{equation}
Inserting \eqref{kr}, \eqref{ur} and \eqref{mt} in \eqref{eli3}, we reach the desired conclusion \eqref{eli2}.

\end{proof}


\section{Strichartz estimates}\label{S:stri}

In this section we prove Theorems \ref{T:GV} and \ref{T:restriction}. We will critically use Proposition \ref{P:disp} and ultimately base our approach on the well-known method of Ginibre and Velo in \cite{GV, GV2}, see also the book of Cazenave \cite{Caze}. For any $a>-1$ we define an operator $T_a^\star$, mapping functions $\vf:\R^+\to \C$ into functions on the half-plane $\R^+\times\R$:
\begin{equation}\label{Tstar1}
T_a^\star(\vf)(x,t) = S_a(t)\vf(x) = \int_0^\infty S_a(x,y,t) \vf(y) y^a dy.  
\end{equation}
Proposition \ref{P:uni} can be reformulated as follows.
 
\begin{proposition}\label{P:unistar}
The operator $T^\star_a: L^2_a\to L^\infty_t L^2_a$, and we have
\[
||T^\star_a\vf||_{L^\infty_t L^2_a} = ||\vf||_{L^2_a}.
\]
\end{proposition}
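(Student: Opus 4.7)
The statement is essentially a translation of Proposition \ref{P:uni} into the mixed-norm language of $L^\infty_t L^2_a$. My plan is to unpack the definitions on both sides and invoke Proposition \ref{P:uni} pointwise in $t$.

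First, I would fix $\varphi \in \mathscr{S}^+$ (or more generally $L^2_a$) and recall from \eqref{Tstar1} that $T^\star_a(\varphi)(\cdot,t) = S_a(t)\varphi(\cdot)$ for every $t \in \mathbb{R}$. By definition of the mixed norm,
\[
\|T^\star_a \varphi\|_{L^\infty_t L^2_a} = \operatorname*{ess\,sup}_{t \in \mathbb{R}} \|S_a(t)\varphi\|_{L^2_a}.
\]
By Proposition \ref{P:uni}, for every $t \in \mathbb{R}$ one has $\|S_a(t)\varphi\|_{L^2_a} = \|\varphi\|_{L^2_a}$. Hence the essential supremum (indeed the pointwise supremum) in $t$ equals $\|\varphi\|_{L^2_a}$, which yields the desired identity.

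To round out the statement, I would note that the map $t \mapsto S_a(t)\varphi$ is strongly continuous from $\mathbb{R}$ to $L^2_a$ (since $\{e^{it\mathcal{B}_a}\}_{t \in \mathbb{R}}$ is a unitary group on $L^2_a$, as already observed after \eqref{saa} via Stone--von Neumann), so $T^\star_a\varphi$ defines a bona fide element of $L^\infty_t L^2_a$ with no measurability subtleties. The boundedness $T^\star_a : L^2_a \to L^\infty_t L^2_a$ is then immediate, and the proof concludes.

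There is no real obstacle here: the content is Proposition \ref{P:uni}, and the only thing to verify is the bookkeeping identification of $\sup_t \|S_a(t)\varphi\|_{L^2_a}$ with the mixed-norm $\|T^\star_a\varphi\|_{L^\infty_t L^2_a}$. This reformulation is important because the subsequent Strichartz argument \`a la Ginibre--Velo will combine this $(L^2_a \to L^\infty_t L^2_a)$ bound for $T^\star_a$ with the dispersive estimate of Proposition \ref{P:disp} via a $T T^\star$ argument.
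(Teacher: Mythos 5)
Your proof is correct and matches the paper exactly: the paper states Proposition \ref{P:unistar} as a direct reformulation of Proposition \ref{P:uni} (with no further argument), which is precisely the pointwise-in-$t$ identification $\|T^\star_a\varphi\|_{L^\infty_t L^2_a}=\sup_{t}\|S_a(t)\varphi\|_{L^2_a}=\|\varphi\|_{L^2_a}$ that you carry out. Your additional remark on strong continuity of $t\mapsto S_a(t)\varphi$ to dispose of measurability is a harmless bonus beyond what the paper records.
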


Next, we obtain a representation formula for the operator $T_a: L^1_t L^2_a\to L^2_a$, whose adjoint is $T_a^\star$.

\begin{lemma}\label{L:T}
For every function $F\in \snn$, we have
\begin{equation}\label{T}
T_a(F)(x) = \int_\R S_a(-t)(F(\cdot,t))(x) dt.
\end{equation}
\end{lemma}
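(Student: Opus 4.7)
The plan is to identify $T_a$ via the duality that defines it and then compute explicitly. Since $T_a$ is defined as the adjoint of $T_a^\star : L^2_a \to L^\infty_t L^2_a$, it is characterized by the pairing identity
\[
\int_0^\infty T_a(F)(x)\,\overline{\vf(x)}\,d\omega_a(x)\;=\;\int_\R\!\int_0^\infty F(x,t)\,\overline{T_a^\star(\vf)(x,t)}\,d\omega_a(x)\,dt
\]
for every $\vf\in\sn$ and $F\in\snn$. The whole statement will follow once I show that the right-hand side equals $\int_0^\infty\bigl[\int_\R S_a(-t)(F(\cdot,t))(x)\,dt\bigr]\overline{\vf(x)}\,d\omega_a(x)$.

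The first ingredient I would record is the conjugation identity
\[
\overline{S_a(x,y,t)}\;=\;S_a(x,y,-t),\qquad x,y>0,\ t\in\R\setminus\{0\}.
\]
This is immediate from the two cases in \eqref{SaS0}: $J_{(a-1)/2}$ is real on $(0,\infty)$, and passing from $t>0$ to $t<0$ flips the sign of the phases $\tfrac{(a+1)\pi}{4}$ and $\tfrac{x^2+y^2}{4|t|}$. Combined with the symmetry $(i)$, namely $S_a(x,y,t)=S_a(y,x,t)$, this gives $\overline{S_a(x,y,t)}=S_a(y,x,-t)$, which is exactly the kernel of $S_a(-t)$ applied in the $x$-variable.

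Next I would unpack $T_a^\star(\vf)(x,t)=S_a(t)\vf(x)=\int_0^\infty S_a(x,y,t)\vf(y)\,d\omega_a(y)$ in the right-hand side and apply Fubini's theorem to swap the order of integration in $(x,y,t)$:
\[
\int_\R\!\int_0^\infty F(x,t)\,\overline{S_a(t)\vf(x)}\,d\omega_a(x)\,dt
\;=\;\int_0^\infty\overline{\vf(y)}\left[\int_\R\!\int_0^\infty S_a(y,x,-t)\,F(x,t)\,d\omega_a(x)\,dt\right]d\omega_a(y).
\]
The inner bracket is precisely $\int_\R S_a(-t)(F(\cdot,t))(y)\,dt$, and the identification $T_a(F)(y)=\int_\R S_a(-t)(F(\cdot,t))(y)\,dt$ follows from the arbitrariness of $\vf\in\sn$.

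The only non-routine point is justifying Fubini. For $\vf\in\sn$ and $F\in\snn$, both $\vf$ and all derivatives $(\tfrac1x\partial_x)^k\partial_t^\ell F$ decay faster than any polynomial in $x$ and $t$, so the absolute integrand $|S_a(x,y,t)F(x,t)\vf(y)|$ is controlled using the pointwise bounds for $S_a(x,y,t)$ proved in Lemma \ref{L:dis} (specifically $|S_a(x,y,t)|\lesssim |t|^{-(a+1)/2}$ when $a\ge 0$, with the three-regime bound \eqref{notsogoodone} when $-1<a<0$). In either case the $x,y$-integrations against the rapidly decaying $F$ and $\vf$ absorb the kernel bound, and the $t$-integrability at $t\to 0$ and $t\to\infty$ is secured by the rapid decay of $F$ in $t$ together with the fact that we may freely split $\int_\R = \int_{|t|\le 1}+\int_{|t|>1}$ and handle each regime separately. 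This is the only delicate step; everything else is a bookkeeping exercise in the duality.
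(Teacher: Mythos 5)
Your overall strategy is sound and the key kernel identities you use are correct: $\overline{S_a(x,y,t)}=S_a(x,y,-t)$ does follow from \eqref{SaS0} since $J_{\frac{a-1}2}$ is real on $(0,\infty)$, and together with the symmetry $S_a(x,y,t)=S_a(y,x,t)$ this identifies $\overline{T_a^\star(\vf)(x,t)}$ with an integral against the kernel of $S_a(-t)$. However, your route is genuinely different from the paper's. The paper never touches the kernel: it writes $\sa T_a(F),\vf\da=\sa\sa F,T_a^\star(\vf)\da\da$, applies the Hankel--Parseval identity \eqref{pol} in the $x$-variable for each fixed $t$, uses the multiplier relation \eqref{hank} to replace $\overline{\mathcal H_{\frac{a-1}2}(S_a(t)\vf)}$ by $e^{itx^2}\overline{\mathcal H_{\frac{a-1}2}(\vf)}$, reabsorbs the phase as $\mathcal H_{\frac{a-1}2}(S_a(-t)F(\cdot,t))$, and applies Parseval again. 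The only interchange of integrals needed there is between $t$ and $x$ for the function $S_a(-t)(F(\cdot,t))(x)\overline{\vf(x)}$, which is controlled by Cauchy--Schwarz and the unitarity of Proposition \ref{P:uni}, so no pointwise kernel bounds enter at all. Your approach buys a more concrete, ``hands-on'' identification of the adjoint; the paper's buys a cleaner justification of the integral swap.

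The one genuine gap is in your Fubini step for $a\ge 1$. The bound $|S_a(x,y,t)|\le C|t|^{-\frac{a+1}2}$ from \eqref{goodone} is \emph{not} locally integrable in $t$ near $t=0$ once $a\ge 1$, and the rapid decay of $F$ in $t$ is of no help there since $F$ need not vanish at $t=0$; so ``the $t$-integrability at $t\to0$ is secured by the rapid decay of $F$ in $t$'' does not hold as written. To repair this you must use the two-regime structure that is implicit in the proof of Lemma \ref{L:dis} even for $a\ge 0$: on $\{\frac{xy}{2|t|}\ge 1\}$ the oscillatory asymptotics \eqref{jnuinfty} give the better bound $|S_a(x,y,t)|\le C|t|^{-\frac12}(xy)^{-\frac a2}$, whose $t$-singularity is integrable and whose weight $(xy)^{-\frac a2}$ is absorbed by $d\omega_a(x)\,d\omega_a(y)$ since $a>-1$; on the complementary region $\{xy\le 2|t|\}$ the measure of the region shrinks like $|t|^{a+1}\log(1/|t|)$, which beats the factor $|t|^{-\frac{a+1}2}$. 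With that splitting your argument closes; without it, the absolute integrability you assert is unjustified precisely in the range $a\ge 1$ covered by the lemma.
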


\begin{proof}
Denote by $\sa\sa\cdot,\cdot \da\da$ the inner product in $L^2_t L^2_a$. For every $\vf\in \sn$, using twice \eqref{pol} and \eqref{hank}, along with \eqref{Tstar1}, we obtain 
\begin{align*}
\sa T_a(F),\vf\da & = \sa\sa F,T_a^\star(\vf)\da\da = \int_\R \int_0^\infty F(x,t) \overline{T_a^\star(\vf)(x,t)} d\omega_a(x) dt
\\
& = \int_\R  \int_0^\infty \mathcal H_{\frac{a-1}2}(F(\cdot,t))(x)\overline{\mathcal H_{\frac{a-1}2}(T_a^\star(\vf)(\cdot,t))(x)} d\omega_a(x) dt
\\
& =  \int_\R  \int_0^\infty e^{i t x^2} \mathcal H_{\frac{a-1}2}(F(\cdot,t))(x)\overline{\mathcal H_{\frac{a-1}2}(\vf)(x)} d\omega_a(x) dt
\\
& =  \int_\R  \int_0^\infty  \mathcal H_{\frac{a-1}2}(S_a(-t)F(\cdot,t))(x)\overline{\mathcal H_{\frac{a-1}2}(\vf)(x)} d\omega_a(x) dt
\\
& = \int_\R  \int_0^\infty S_a(-t)(F(\cdot,t))(x) \overline{\vf(x)} d\omega_a(x) dt
\\
& = \int_0^\infty\left(\int_\R S_a(-t)(F(\cdot,t))(x) dt\right) \overline{\vf(x)} d\omega_a(x)
\\
& = \big\sa \int_\R S_a(-t)(F(\cdot,t)) dt,\vf\big\da.
\end{align*}
This proves \eqref{T}.

\end{proof}

\begin{proposition}\label{P:Tunistar}
The operator $T_a: L^1_t L^2_a\to L^2_a$, and we have for any $F\in \snn$
\[
||T_a(F)||_{L^1_t L^2_a} \le  ||F||_{L^2_a}.
\]
\end{proposition}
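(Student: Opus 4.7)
The statement is the dual counterpart of Proposition \ref{P:unistar}: the inequality should read $\|T_a(F)\|_{L^2_a}\le \|F\|_{L^1_t L^2_a}$ (the norms appear transposed in the display), and this is exactly the $L^2_a$-boundedness dual to the $L^2_a\to L^\infty_t L^2_a$ bound for $T_a^\star$. The plan is to give two short arguments, using the material already established, and then pick the cleaner one.

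The first approach I would take uses the representation formula \eqref{T} from Lemma \ref{L:T} directly. Starting from
\[
T_a(F)(x) = \int_\R S_a(-t)(F(\cdot,t))(x)\,dt,
\]
I would apply Minkowski's integral inequality to move the $L^2_a$ norm in $x$ through the $t$-integral, obtaining
\[
\|T_a(F)\|_{L^2_a} \le \int_\R \|S_a(-t)(F(\cdot,t))\|_{L^2_a}\,dt.
\]
Then Proposition \ref{P:uni}, the unitarity of $S_a(-t)$ on $L^2_a$, reduces the right-hand side to $\int_\R \|F(\cdot,t)\|_{L^2_a}\,dt = \|F\|_{L^1_t L^2_a}$, which is the claim.

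A second, equivalent, route is via duality. For $\vf\in \sn$, starting from the identity $\sa T_a(F),\vf\da = \sa\sa F, T_a^\star(\vf)\da\da$ used in the proof of Lemma \ref{L:T}, one applies H\"older in the $(t,x)$ pairing with exponents $(1,\infty)$ in $t$ and $(2,2)$ in $x$, and invokes Proposition \ref{P:unistar} to bound $\|T_a^\star(\vf)\|_{L^\infty_t L^2_a} = \|\vf\|_{L^2_a}$. Taking the supremum over $\vf\in \sn$ with $\|\vf\|_{L^2_a}\le 1$ yields the result after noting that $\sn$ is dense in $L^2_a$.

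There is no real obstacle here; the only subtlety worth flagging in a one-line remark is that both routes implicitly require the $t$-integrand to be a Bochner-measurable $L^2_a$-valued function of $t$, which is immediate for $F\in \snn$ since $t\mapsto S_a(-t)F(\cdot,t)$ is continuous into $L^2_a$ by the strong continuity of the unitary group $\{S_a(\tau)\}_{\tau\in\R}$. Once this is noted, the estimate follows by a single line in either formulation, and one obtains precisely $\|T_a(F)\|_{L^2_a}\le \|F\|_{L^1_t L^2_a}$, so that $T_a$ extends uniquely to a bounded operator $L^1_t L^2_a \to L^2_a$ of norm at most one, which will be the input used together with $TT^\star$ in the next step of the Ginibre-Velo argument for Theorem \ref{T:GV}.
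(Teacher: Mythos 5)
Your first route (Minkowski's integral inequality applied to the representation \eqref{T}, followed by the unitarity of $S_a(-t)$ on $L^2_a$) is exactly the paper's own proof, and you are also right that the norms in the displayed inequality of the proposition are transposed: the paper's proof establishes $\|T_a(F)\|_{L^2_a}\le \|F\|_{L^1_t L^2_a}$, consistent with the mapping $T_a:L^1_tL^2_a\to L^2_a$. The duality variant and the measurability remark are fine but not needed; the argument is correct and matches the paper.
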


\begin{proof}
From Lemma \ref{L:T} and \eqref{hank} it is clear that
\begin{equation}\label{TF}
||T_a(F)||_{L^2_a}  \le \int_\R ||S_a(-t)(F(\cdot,t))||_{L^2_a} dt
= \int_\R ||F(\cdot,t)||_{L^2_a} dt
 = ||F||_{L^1_t L^2_a}.
\end{equation}

\end{proof}

Suppose now that $F\in \snn$. Then, \eqref{Tstar1} and \eqref{T} in Lemma \ref{L:T} give
\begin{align}\label{stelle}
T_a^\star T_a(F)(x,t) & = S_a(t)(T_a(F))(x) = S_a(t) \int_\R S_a(-\tau)(F(\cdot,\tau))(x) d\tau
\\
& =  \int_\R S_a(t-\tau)(F(\cdot,\tau))(x) d\tau.
\notag
\end{align}

We next establish a basic estimate for the composition operator $T^\star T$.

\begin{theorem}\label{T:comp}
Let $a\ge 0$ and let the pair $(q,r)$ be $a$-admissible. There exists $C(a,r)>0$ such that for every $F\in \snn$ one has
\[
||T_a^\star T_a(F)||_{L^q_t L^r_a} \le C(a,r) ||F||_{L^{q'}_t L^{r'}_a}.
\]
\end{theorem}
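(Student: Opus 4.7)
The plan is to combine the composition formula \eqref{stelle} with the dispersive estimate \eqref{eli1} from Proposition \ref{P:disp} and the classical Hardy-Littlewood-Sobolev inequality (fractional integration) in the time variable. This is the standard Ginibre-Velo scheme.

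First, I would start from the representation formula
\[
T_a^\star T_a(F)(x,t) = \int_\R S_a(t-\tau)(F(\cdot,\tau))(x)\, d\tau
\]
established in \eqref{stelle}. Applying Minkowski's integral inequality to the $L^r_a$ norm in the $x$ variable, and then invoking \eqref{eli1} from Proposition \ref{P:disp} (which is available since we assume $a\ge 0$ and $r\ge 2$), I obtain
\[
\|T_a^\star T_a(F)(\cdot,t)\|_{L^r_a} \le \int_\R \|S_a(t-\tau)F(\cdot,\tau)\|_{L^r_a}\, d\tau \le C(a,r)\int_\R |t-\tau|^{-(a+1)(\frac12-\frac1r)}\,\|F(\cdot,\tau)\|_{L^{r'}_a}\, d\tau.
\]

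Next, set $g(\tau)=\|F(\cdot,\tau)\|_{L^{r'}_a}$. Using the $a$-admissibility relation $(a+1)(\tfrac12-\tfrac1r)=\tfrac2q$, the right-hand side is a Riesz-type convolution $|t|^{-2/q}\ast g$. I would then apply the Hardy-Littlewood-Sobolev inequality on $\R$ with kernel exponent $2/q$, mapping $L^{q'}_t\to L^q_t$; the HLS exponent condition is exactly
\[
\frac{1}{q'}-\frac{1}{q}=1-\frac{2}{q},
\]
which is automatic. The non-endpoint hypothesis $0<\tfrac{2}{q}<1$, i.e.\ $q>2$, is guaranteed by Definition \ref{D:a}: the strict inequality $r>2$ forces $q<\infty$, while the extra restriction $r<\frac{2(a+1)}{a-1}$ when $a>1$ precisely ensures $q>2$ (at the critical value $r=\frac{2(a+1)}{a-1}$ one has $q=2$, the forbidden endpoint). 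Thus HLS yields
\[
\|T_a^\star T_a(F)\|_{L^q_t L^r_a} \le C(a,r)\, \big\| |t|^{-2/q}\ast g\big\|_{L^q_t} \le C(a,r)\, \|g\|_{L^{q'}_t} = C(a,r)\,\|F\|_{L^{q'}_t L^{r'}_a},
\]
which is the claim.

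The only genuinely delicate point is verifying that the $a$-admissibility conditions of Definition \ref{D:a} place us strictly inside the HLS regime $q>2$; the computation above shows this is built into the definition via the constraint $r<\frac{2(a+1)}{a-1}$ for $a>1$. The endpoint $q=2$ (Keel-Tao) is excluded by hypothesis and therefore does not need to be addressed here.
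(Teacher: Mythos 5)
Your argument is correct and follows essentially the same route as the paper's proof: the representation \eqref{stelle}, Minkowski plus the dispersive estimate \eqref{eli1}, and one-dimensional fractional integration (Hardy--Littlewood--Sobolev) in time, with the admissibility constraints of Definition \ref{D:a} verifying the non-endpoint condition $2<q<\infty$ exactly as the paper does via its parameter $\beta$. No gaps.
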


\begin{proof}
The inequality \eqref{stelle} shows that, if $2\le r\le \infty$, then
\begin{align}\label{TstarT}
||T_a^\star T_a(F)(\cdot,t)||_{L_a^{r}} & \le \int_\R ||S_a(t-\tau)(F(\cdot,\tau))||_{L_a^{r}} d\tau
\\
& \le C(a,r) \int_\R |t-\tau|^{-(a+1)(\frac 12 - \frac 1r)} ||F(\cdot,\tau))||_{L_a^{r'}} d\tau, 
\notag
\end{align}
where in the last inequality we have used \eqref{eli1} in Proposition \ref{P:disp}. If we now consider the M. Riesz operator of fractional integration on $\R$
\begin{equation}\label{marcel}
I_\beta(h) = h\star |\cdot |^{-(1-\beta)},\ \ \ \ \ \ 0<\beta<1,   
\end{equation} 
then by the Hardy-Littlewood theorem (see \cite{St}) we know that, for any $1<p<1/\beta$, there exists $C(\beta,p)>0$ such that 
\begin{equation}\label{marcello}
\|I_\beta(h)||_{L^q(\R)} \le C(\beta,p) ||h||_{L^p(\R)},\ \ \ \ \ \ \ \forall h\in L^p(\R),
\end{equation}
provided that
\[
\frac 1p - \frac 1q = \beta.
\] 
In particular, if for a given $q\ge 1$ we know that $1<q'<1/\beta$, then we can take $p = q'$ in \eqref{marcello}, and obtain
\begin{equation}\label{marcellino}
\|I_\beta(h)||_{L^q(\R)} \le C(\beta,q) ||h||_{L^{q'}(\R)},\ \ \ \ \ \ \ \forall h\in L^{q'}(\R),
\end{equation}
provided that
\begin{equation}\label{a}
\frac 2q = 1-\beta.
\end{equation}
Notice that, since $0<1-\beta<1$, the equation \eqref{a} implies $2<q< \infty$. Furthermore, since the constraint $1<q'<1/\beta$ is equivalent to $0<\frac 1q<1-\beta$, and since $\frac 1q < \frac 2q = 1-\beta$ by \eqref{a}, it is clear that $0<\beta<1$ plus \eqref{a}, automatically imply $1<q'<1/\beta$.
Since from \eqref{TstarT} we see that we must take
\begin{equation}\label{hls}
1 - \beta = (a+1)(\frac 12 - \frac 1r),
\end{equation}
we infer that, in order to satisfy $0<\beta<1$,
we need to have 
\begin{equation}\label{hls2}
0< 1 - (a+1)(\frac 12 - \frac 1r)<1. 
\end{equation}
Since $a+1>0$,  to have $1 - (a+1)(\frac 12 - \frac 1r)<1$ we must have $r>2$. Secondly, for $0<1 - (a+1)(\frac 12 - \frac 1r)$
to hold, we must have
\[
(a+1)(\frac 12 - \frac 1r)<1\ \Longleftrightarrow\ \frac{a+1}2<1+\frac{a+1}r.
\]
This inequality is automatically true when $-1<a\le 1$. If $a>1$, we must impose the further constraint $r<\frac{2(a+1)}{a-1}$. Since $(q,r)$ is $a$-admissible, we know that the number $\beta$ defined by the equation \eqref{hls}, and the number $q'$, satisfy the constraints \eqref{hls2}. Therefore, since \eqref{TstarT} implies
\begin{equation}\label{TstarTT}
||T_a^\star T_a(F)(\cdot,t)||_{L_a^{r}} \le C(a,r) I_\beta(h)(t),
\end{equation}
with $h(t) = ||F(\cdot,t)||_{L^{r'}_a}$, we infer from \eqref{marcellino}
\[
||T_a^\star T_a(F)||_{L^q_t L_a^{r}} \le C^\star(a,r) ||I_\beta(h)||_{L^q_t} \le \overline C(a,r) ||h||_{L^{q'}_t} = \overline C(a,r) ||F||_{L^{q'}_t L^{r'}_a}.
\]

\end{proof}

Theorem \ref{T:comp} has the following two important consequences.

\begin{theorem}[Restriction estimates]\label{T:striT}
Under the assumptions of Theorem \ref{T:comp}, for every $F\in \snn$ we have
\[
||T_a(F)||_{L^2_a} \le C'(a,r) ||F||_{L^{q'}_t L^{r'}_a}.
\]
\end{theorem}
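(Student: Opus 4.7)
The plan is to deduce this from Theorem \ref{T:comp} by the classical $TT^\star$ duality argument. The starting observation is that, by definition of the adjoint pairing between $T_a$ and $T_a^\star$ (see the proof of Lemma \ref{L:T}), for every $F\in \snn$ one has
\[
\|T_a(F)\|_{L^2_a}^2 = \sa T_a(F), T_a(F)\da_{L^2_a} = \sa\sa F, T_a^\star T_a(F) \da\da_{L^2_t L^2_a}.
\]
Since the pair $(q,r)$ is $a$-admissible and the dual of $L^q_t L^r_a$ is $L^{q'}_t L^{r'}_a$, an application of H\"older's inequality in both the $x$ and $t$ variables gives
\[
\bigl|\sa\sa F, T_a^\star T_a(F)\da\da_{L^2_t L^2_a}\bigr| \le \|F\|_{L^{q'}_t L^{r'}_a}\ \|T_a^\star T_a(F)\|_{L^q_t L^r_a}.
\]
At this point Theorem \ref{T:comp} provides the key bound
\[
\|T_a^\star T_a(F)\|_{L^q_t L^r_a} \le C(a,r)\ \|F\|_{L^{q'}_t L^{r'}_a},
\]
so combining the last two displays yields
\[
\|T_a(F)\|_{L^2_a}^2 \le C(a,r)\ \|F\|_{L^{q'}_t L^{r'}_a}^2,
\]
and taking square roots gives the desired conclusion with $C'(a,r) = C(a,r)^{1/2}$.

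The argument is essentially formal once Theorem \ref{T:comp} is in hand; the only points worth verifying carefully are that the pairing $\sa\sa F, T_a^\star T_a(F)\da\da_{L^2_t L^2_a}$ is well-defined for $F\in \snn$ (which is automatic by the decay of elements of $\snn$) and that the duality identity for the mixed-norm spaces $L^q_t L^r_a$ and $L^{q'}_t L^{r'}_a$ applies in the form used above. No new estimate beyond Theorem \ref{T:comp} is required, so there is no real obstacle here; the content of the theorem is entirely packaged inside the composition estimate for $T_a^\star T_a$.
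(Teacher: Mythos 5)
Your proposal is correct and is essentially identical to the paper's own proof: both write $\|T_a(F)\|_{L^2_a}^2$ as the $L^2_tL^2_a$ pairing of $F$ with $T_a^\star T_a(F)$, apply H\"older in $x$ and then in $t$ to pass to the dual mixed norms, and invoke Theorem \ref{T:comp}, obtaining $C'(a,r)=\overline C(a,r)^{1/2}$. No further comment is needed.
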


\begin{proof}
We have
\begin{align*}
||T_a(F)||^2_{L^2_a} & = \sa T_a(F),T_a(F)\da_{L^2_a} = \sa\sa T_a^\star(T_a(F)),F\da\da_{L^2_t L^2_a} 
\\
& = \int_\R \int_0^\infty \overline{F(x,t)} T^\star_a(T_a(F))(x,t) d\omega_a(x) dt
\\
& \le \int_\R\left(\int_0^\infty |T^\star_a(T_a(F))(x,t)|^r d\omega_a(x)\right)^{\frac 1r} \left(\int_0^\infty |F(x,t)|^{r'} d\omega_a(x)\right)^{\frac 1{r'}} dt 
\\
& \le ||T_a^\star T_a(F)||_{L^q_t L^r_a} ||F||_{L^{q'}_t L^{r'}_a} \le \overline C(a,r) ||F||^2_{L^{q'}_t L^{r'}_a},
\end{align*}
where in the second to the last inequality we have used one more H\"older inequality in $t$, and in the last we have used Theorem \ref{T:comp}. This proves the theorem with $C'(a,r) = \overline C(a,r)^{1/2}$.

\end{proof}

We can finally provide the

\begin{proof}[Proof of Theorem \ref{T:GV}]
The proof follows from Theorem \ref{T:striT} by a duality argument. Let $\vf\in \sn$, then for every $F\in \snn$, we have
\begin{align*}
|\sa\sa T_a^\star(\vf),F\da\da_{L^2_t L^2_a}| & = |\sa \vf,T_a(F)\da_{L^2_a}| \le ||\vf||_{L^2_a} ||T_a(F)||_{L^2_a}
\\
& \le C'(a,r) ||\vf||_{L^2_a} ||F||_{L^{q'}_t L^{r'}_a}.
\end{align*}    
Taking the supremum on all $F\in \snn$, we reach the conclusion
\begin{equation}\label{tri}
||T_a^\star(\vf)||_{L^{q}_t L^{r}_a} \le C'(a,r) ||\vf||_{L^2_a}.
\end{equation}
Keeping in mind that, in view of \eqref{solBB}, the solution to \eqref{cp0} with $F=0$  is given by
\[
u(x,t) = S_a(t)\vf(x) = T_a^\star(\vf)(x,t),
\]
 it is clear that \eqref{tri} is the same as \eqref{striuno}. This proves the theorem when $F\equiv 0$. The non-homogeneous case, now follows by arguing as in \cite{Stri}. Define
\[
v(x,t) = \int_0^t \int_0^\infty S_a(x,y,t-\tau) F(y,\tau) d\omega_a(y) d\tau = \int_0^t S_a(t-\tau)(F(\cdot,\tau))(x) d\tau.
\]
Then, Proposition \ref{P:disp} gives 
\begin{align*}
& ||v(\cdot,t)||_{L^r_a} \le \int_0^t ||S_a(t-\tau)(F(\cdot,\tau))||_{L^r_a} d\tau \le C(a,r)\int_0^t \frac{||F(\cdot,\tau))||_{L^{r'}_a}}{|t-\tau|^{(a+1)(\frac 12 - \frac 1r)}} d\tau
\\
& \le  C(a,r)\int_0^\infty \frac{||F(\cdot,\tau))||_{L^{r'}_a}}{|t-\tau|^{(a+1)(\frac 12 - \frac 1r)}} d\tau.
\end{align*}
This estimate is similar to \eqref{TstarTT}, and arguing as in the end of the proof of Theorem \ref{T:comp}, we reach the desired conclusion that
\[
||v||_{L^q_t L_a^{r}} \le \overline C(a,r) ||F||_{L^{q'}_t L^{r'}_a}.
\]
 
\end{proof}

Next, we give the 

\begin{proof}[Proof of Theorem \ref{T:restriction}]
By Theorem \ref{T:striT} and the Plancherel identity \eqref{planrad} applied with $a = 2\nu+1$
we have for $F\in \snn$ 
\begin{equation}\label{res1}
\int_0^\infty |\mathcal H_{\frac{a-1}2}(T_a(F))(\xi)|^2 d\omega_a(\xi) = ||T_a(F)||^2_{L^2_a} \le C'(a,r) ||F||^2_{L^{q'}_t L^{r'}_a}.
\end{equation}
Next, \eqref{T} in Lemma \ref{L:T} gives
\begin{equation}\label{T2}
\mathcal H_{\frac{a-1}2}(T_a(F))(\xi) = \int_\R \mathcal H_{\frac{a-1}2}(S_a(-t)(F(\cdot,t)))(\xi) dt.
\end{equation}
We now apply \eqref{hank}, which gives
\[
\mathcal H_{\frac{a-1}2}(S_a(-t)(F(\cdot,t)))(\xi) = e^{i t x^2} \mathcal H_{\frac{a-1}2}(F(\cdot,t))(\xi). 
\]
Substituting this expression in the right-hand side of \eqref{T2}, we find
\begin{equation}\label{T3}
\mathcal H_{\frac{a-1}2}(T_a(F))(\xi) = \int_\R e^{i t \xi^2} \mathcal H_{\frac{a-1}2}(F(\cdot,t))(\xi) dt = \widetilde{\mathcal H}_{\frac{a-1}2}(F)(\xi,-\frac{\xi^2}{2\pi}),
\end{equation}
where $\widetilde{\mathcal H}_{\frac{a-1}2}(F)$ denotes the Fourier-Hankel transform of $F$, see \eqref{FH}. Substituting \eqref{T3} in \eqref{res1} we finally obtain \eqref{re1}.

\end{proof}


\section{The case $-1<a<0$: weighted estimates}\label{S:weighted}

In this section we analyze the case $-1<a<0$. To handle the two different regimes in the estimate \eqref{eli2} in Proposition \ref{P:disp}, we will need the following generalization of the Hardy-Littlewood-Sobolev theorem, see e.g. \cite[Lemma 5.1]{BG}. 

\begin{lemma}\label{HLS}
Let $0<\gamma_{1}, \gamma_2 <1$, $C_1,C_2>0$. Let $K:\mathbb{R}\to \mathbb{R}$ be such that
\begin{align*}
|K(t)|\le \begin{cases}
\frac{C_1}{|t|^{\gamma_{1}}} \ \ \ \ \text{if }\ |t|\le 1,
\\
\frac{C_2}{|t|^{\gamma_{2}}} \ \ \ \ \text{if}\ |t|\ge1.
\end{cases}
\end{align*}
If $1<p_{1}<q_{1}<\infty$ , $1< p_{2}, q_{2} <\infty$
\begin{align*}
\gamma_{1}=1+\frac{1}{q_{1}}-\frac{1}{p_{1}}, \ \ \ \text{and}\ \ \ \ \gamma_{2}\ge1+\frac{1}{q_{2}}-\frac{1}{p_{2}},
\end{align*}  
then one has
\begin{align}\label{meglio}
\|f\star K\|_{L^{q_{1}+q_2}}\le C \|f\|_{L^{p_{1}\cap p_{2}}}.
\end{align}
\end{lemma}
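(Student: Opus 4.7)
The plan is to split the kernel $K$ into its behavior near the origin and at infinity, dominate each piece by a single pure power $|t|^{-\gamma}$ with the exponent tuned so that the one-dimensional Hardy--Littlewood--Sobolev estimate \eqref{marcello} gives precisely $L^{p_j}\to L^{q_j}$ boundedness of the convolution, and then assemble using the definitions of the norms on $L^{p_1\cap p_2}$ and $L^{q_1+q_2}$.

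Concretely, I would write $K=K_1+K_2$ with
\[
K_1(t):=K(t)\,\mathbf 1_{\{|t|\le 1\}},\qquad K_2(t):=K(t)\,\mathbf 1_{\{|t|>1\}},
\]
so that $f\star K=f\star K_1+f\star K_2$. For the inner piece, the hypothesis yields $|K_1(t)|\le C_1|t|^{-\gamma_1}$ pointwise (the bound is automatic on $|t|>1$ where $K_1$ vanishes), and since $\gamma_1=1+1/q_1-1/p_1\in(0,1)$ with $1<p_1<q_1<\infty$, applying HLS to the kernel $|t|^{-\gamma_1}$ gives
\[
\|f\star K_1\|_{L^{q_1}}\le C\,\| |f|\star|\cdot|^{-\gamma_1}\|_{L^{q_1}}\le C\|f\|_{L^{p_1}}.
\]

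For the outer piece, set $\tilde\gamma_2:=1+1/q_2-1/p_2$. The combined constraints $0<\gamma_2<1$ and $\gamma_2\ge\tilde\gamma_2$ force $p_2<q_2$ and $\tilde\gamma_2\in(0,1)$. The key observation is that whenever $K_2(t)\ne 0$ one has $|t|\ge 1$, and on that set $|t|^{-\gamma_2}\le |t|^{-\tilde\gamma_2}$ (base $\ge 1$, exponents ordered), so
\[
|K_2(t)|\le C_2|t|^{-\gamma_2}\mathbf 1_{\{|t|\ge 1\}}\le C_2|t|^{-\tilde\gamma_2}
\]
holds \emph{everywhere} on $\R$. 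A second application of HLS, now with the exponent $\tilde\gamma_2$ and the pair $(p_2,q_2)$, yields $\|f\star K_2\|_{L^{q_2}}\le C\|f\|_{L^{p_2}}$. Combining the two estimates with the definitions of the norms on $L^{q_1+q_2}$ and $L^{p_1\cap p_2}$ produces
\[
\|f\star K\|_{L^{q_1+q_2}}\le \|f\star K_1\|_{L^{q_1}}+\|f\star K_2\|_{L^{q_2}}\le C\bigl(\|f\|_{L^{p_1}}+\|f\|_{L^{p_2}}\bigr)=C\|f\|_{L^{p_1\cap p_2}},
\]
which is exactly \eqref{meglio}.

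The only mildly delicate point is handling the outer piece in the strict-inequality case $\gamma_2>\tilde\gamma_2$: there one could alternatively observe that $K_2\in L^r(\R)$ for $r=(1-1/p_2+1/q_2)^{-1}$ and invoke Young's convolution inequality, but the uniform downgrade to the critical power $|t|^{-\tilde\gamma_2}$ folds both the borderline case (where HLS is indispensable since $K_2$ just misses $L^r$) and the strict case into a single application of HLS, which is why I would present it that way.
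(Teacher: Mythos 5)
Your proof is correct. Note that the paper does not actually prove this lemma — it is quoted from \cite[Lemma 5.1]{BG} and used as a black box in Section \ref{S:weighted} — so there is no in-paper argument to compare against; your argument (split $K$ at $|t|=1$, dominate the inner piece by $C_1|t|^{-\gamma_1}$ and the outer piece by $C_2|t|^{-\tilde\gamma_2}$ with $\tilde\gamma_2=1+\frac{1}{q_2}-\frac{1}{p_2}\in(0,1)$, apply the one-dimensional Hardy--Littlewood--Sobolev estimate \eqref{marcello} to each piece, and assemble via the definitions of the $L^{q_1+q_2}$ and $L^{p_1\cap p_2}$ norms) is the standard proof of such statements and all the exponent checks (in particular $p_2<q_2$ and $p_j<1/(1-\gamma)$ for the relevant $\gamma$) go through as you indicate.
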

 
We can now establish the following substitute of Theorem \ref{T:comp}.

\begin{theorem}\label{T:comp2}
Let $-1<a< 0$ and for $r>2$ assume that $q$ and $q_\infty$ satisfy the conditions
\begin{equation}\label{wa2}
\frac 2q = \frac 12 - \frac 1r,\ \ \ \ \ \ \ \ \frac 2{q_\infty} \le (a+1)\left(\frac 12 - \frac 1r\right).
\end{equation}
There exists $C(a,r,q_\infty)>0$ such that for every $F$, such that $F k_1^{1-\frac{2}{r'}}\in L^{q'\cap q'_\infty}_t L^{r'}_a$,
one has the following estimate
\begin{equation}\label{strimas}
||\ ||T_a^\star T_a(F)(\cdot,t) k_1^{1-\frac 2r}||_{L^r_a}\ ||_{L^{q+q_\infty}_t } \le C(a,r,q_\infty) ||\ ||F(\cdot,t) k_1^{1-\frac{2}{r'}}||_{L^{r'}_a}\ ||_{L^{q'\cap q'_\infty}_t}.
\end{equation}
\end{theorem}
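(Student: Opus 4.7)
The plan is to follow the blueprint of the proof of Theorem \ref{T:comp}, but with two substitutions dictated by the weighted regime $-1<a<0$: replace the unweighted dispersive bound \eqref{eli1} with its weighted counterpart \eqref{eli2} of Proposition \ref{P:disp}, and replace the classical Hardy--Littlewood--Sobolev inequality \eqref{marcello} with the mixed-norm version stated in Lemma \ref{HLS}, which is tailor-made to handle kernels whose singularities at $0$ and at $\infty$ scale differently.

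First, from \eqref{stelle} one has
\[
T_a^\star T_a(F)(x,t) = \int_\R S_a(t-\tau)(F(\cdot,\tau))(x)\, d\tau.
\]
Multiplying through by the weight $k_1^{1-\frac{2}{r}}$, taking the $L^r_a$ norm in $x$, and applying Minkowski's integral inequality followed by the weighted dispersive estimate \eqref{eli2} gives
\[
\bigl\| T_a^\star T_a(F)(\cdot,t)\, k_1^{1-\frac{2}{r}}\bigr\|_{L^r_a} \leq C(a,r) \int_\R K(t-\tau)\, h(\tau)\, d\tau,
\]
where I set
\[
h(\tau) := \bigl\| F(\cdot,\tau)\, k_1^{1-\frac{2}{r'}}\bigr\|_{L^{r'}_a}, \qquad K(t) := |t|^{-(a+1)(\frac{1}{2}-\frac{1}{r})} + |t|^{-(\frac{1}{2}-\frac{1}{r})}.
\]

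Second, since $-1<a<0$ and $r>2$, one has $0<(a+1)(\tfrac{1}{2}-\tfrac{1}{r}) < \tfrac{1}{2}-\tfrac{1}{r} < 1$, so $K$ fits the hypothesis of Lemma \ref{HLS} with
\[
\gamma_1 = \tfrac{1}{2}-\tfrac{1}{r} \quad (\text{governing } |t|\leq 1), \qquad \gamma_2 = (a+1)\bigl(\tfrac{1}{2}-\tfrac{1}{r}\bigr) \quad (\text{governing } |t|\geq 1),
\]
both in $(0,1)$. I then apply Lemma \ref{HLS} with $p_1 = q'$, $q_1 = q$, $p_2 = q'_\infty$, $q_2 = q_\infty$. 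The scaling identity $\gamma_1 = 1 + \tfrac{1}{q_1} - \tfrac{1}{p_1}$ reads $\tfrac{1}{2}-\tfrac{1}{r} = \tfrac{2}{q}$, which is the first condition of \eqref{wa2}; the scaling inequality $\gamma_2 \geq 1 + \tfrac{1}{q_2} - \tfrac{1}{p_2}$ reads $(a+1)(\tfrac{1}{2}-\tfrac{1}{r}) \geq \tfrac{2}{q_\infty}$, which is the second condition of \eqref{wa2}. The requirements $1<p_1<q_1<\infty$ and $1<p_2,q_2<\infty$ follow from $r>2$ (which forces $q>4$, hence $q'<q$) and the implicit finiteness of $q_\infty$. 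The estimate \eqref{meglio} then produces
\[
\| K \star h \|_{L^{q+q_\infty}_t} \leq C(a,r,q_\infty)\, \|h\|_{L^{q'\cap q'_\infty}_t},
\]
which, unpacked, is exactly \eqref{strimas}.

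The main obstacle, and the reason Theorem \ref{T:comp} cannot simply be quoted in this range, is that the dispersive kernel \eqref{eli2} has two distinct decay rates separated by the threshold $|t|=1$, and no single Hardy--Littlewood--Sobolev convolution estimate delivers a clean Lebesgue bound for such a sum. The sum/intersection machinery of Lemma \ref{HLS} is precisely what is needed to absorb the two regimes simultaneously, and it forces the conclusion to live in the mixed-norm spaces $L^{q+q_\infty}_t L^{r}_a$ and $L^{q' \cap q'_\infty}_t L^{r'}_a$ rather than in the usual $L^{q}_t L^{r}_a, L^{q'}_t L^{r'}_a$ of the $a\geq 0$ setting. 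Once this structural point is recognized, the remainder of the argument is a direct verification of scaling exponents.
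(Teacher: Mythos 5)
Your proposal is correct and follows essentially the same route as the paper's proof: weighting by $k_1^{1-\frac{2}{r}}$, applying Minkowski and the weighted dispersive estimate \eqref{eli2} to reduce to a convolution against $K(t)=|t|^{-(a+1)(\frac12-\frac1r)}+|t|^{-(\frac12-\frac1r)}$, and then invoking Lemma \ref{HLS} with $\gamma_1=\frac12-\frac1r$ near the origin and $\gamma_2=(a+1)(\frac12-\frac1r)$ at infinity, which is exactly how the conditions \eqref{wa2} and the $L^{q+q_\infty}_t$/$L^{q'\cap q'_\infty}_t$ spaces arise in the paper. Your exponent verifications ($\gamma_1=\frac2q$, $\gamma_2\ge\frac{2}{q_\infty}$, and $q>4$ so $q'<q$) are all accurate.
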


\begin{proof}
Keeping Definition \ref{D:mn} in mind, we obtain from \eqref{stelle}
\[
T_a^\star T_a(F)(x,t) k_1(x)^{1-\frac 2r} =  \int_\R S_a(t-\tau)(F(\cdot,\tau))(x) k_1(x)^{1-\frac 2r}  d\tau.
\]
This gives
\begin{align}\label{wo}
& ||T_a^\star T_a(F)(\cdot,t) k_1^{1-\frac 2r}||_{L^r_a} \le \int_\R ||S_a(t-\tau)(F(\cdot,\tau)) k_1^{1-\frac 2r}||_{L^r_a}  d\tau
\\
& \le C(a,r) \int_\R \left\{|t-\tau|^{-(a+1)(\frac 12 - \frac 1r)} + |t-\tau|^{-(\frac{1}2-\frac 1r)}\right\} ||F(\cdot,\tau) k_1^{1-\frac{2}{r'}}||_{L^{r'}_a} d\tau
\notag
\\
& = \int_\R K(t-\tau) h(\tau)d\tau,
\notag
\end{align}
where in the second inequality we have used \eqref{eli2} in Proposition \ref{P:disp}, and in the third we have set
\[
h(\tau) = ||F(\cdot,\tau) k_1^{1-\frac{2}{r'}}||_{L^{r'}_a},\ \ \ \ \ K(t) = \left\{|t|^{-(a+1)(\frac 12 - \frac 1r)} + |t|^{-(\frac{1}2-\frac 1r)}\right\}.
\]
Since $0<a+1<1$, it is clear that we have 
\[
K(t) \le \begin{cases}
C |t|^{-(\frac 12 - \frac 1r)}, \ \ \ \text{if}\ \ 0<|t|<1,
\\
C |t|^{-(a+1)(\frac 12 - \frac 1r)}, \ \ \ \text{if}\ \ |t|\ge 1.
\end{cases}
\]
For $r>2$ we now apply Lemma \ref{HLS} with 
\[
\gamma_1 = \frac 12 - \frac 1r,\ \ \ \ \ \ \ \ \gamma_2 = (a+1)(\frac 12 - \frac 1r)<\gamma_1.
\]
Then for $q$ and $q_\infty$ as in \eqref{wa2}, from \eqref{wo}  
we obtain the desired conclusion \eqref{strimas}
for some $C(a,r,q_\infty)>0$.

\end{proof}

We next use Theorem \ref{T:comp2} to establish the following substitute of Theorem \ref{T:striT}.

\begin{theorem}[Weighted restriction estimates]\label{T:striT2}
Under the assumptions of Theorem \ref{T:comp2}, for every $F\in \snn$ we have
\begin{equation}\label{strimas2}
||T_a(F)||_{L^2_a} \le C(a,r,q_\infty)  ||\ ||F(\cdot,t) k_1^{1-\frac{2}{r'}}||_{L^{r'}_a}\ ||_{L^{q'\cap q'_\infty}_t}.
\end{equation}
\end{theorem}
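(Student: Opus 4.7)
The plan is to mirror the duality argument used in the proof of Theorem \ref{T:striT}, taking care to insert weights that cancel against each other so that Theorem \ref{T:comp2} becomes applicable. I would start from
\[
\|T_a(F)\|_{L^2_a}^{2} = \langle T_a(F), T_a(F)\rangle_{L^2_a} = \langle\langle T_a^\star T_a(F), F\rangle\rangle_{L^2_t L^2_a},
\]
and then rewrite the integrand under the double pairing as
\[
\overline{F(x,t)}\cdot T_a^\star T_a(F)(x,t) = \overline{F(x,t)\, k_1(x)^{1-2/r'}}\cdot T_a^\star T_a(F)(x,t)\, k_1(x)^{1-2/r}.
\]
This is legitimate because $(1-\tfrac{2}{r})+(1-\tfrac{2}{r'}) = 2-(\tfrac{2}{r}+\tfrac{2}{r'}) = 0$, so the two powers of $k_1$ multiply back to $1$ pointwise.

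The next step is a H\"older inequality in the spatial variable with exponents $r'$ and $r$ (with respect to the measure $d\omega_a$), which produces the integrand
\[
\|F(\cdot,t)\, k_1^{1-2/r'}\|_{L^{r'}_a}\cdot \|T_a^\star T_a(F)(\cdot,t)\, k_1^{1-2/r}\|_{L^{r}_a},
\]
followed by a H\"older inequality in time based on the duality $(L^{q+q_\infty}_t)' = L^{q'\cap q'_\infty}_t$ recorded in Section \ref{Not}. At this point the first factor is precisely the right-hand side of \eqref{strimas2}, while the second factor is exactly the quantity controlled by Theorem \ref{T:comp2}.

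Applying Theorem \ref{T:comp2} then yields
\[
\|T_a(F)\|_{L^2_a}^{2}\le C(a,r,q_\infty)\,\big\|\,\|F(\cdot,t)\, k_1^{1-2/r'}\|_{L^{r'}_a}\,\big\|_{L^{q'\cap q'_\infty}_t}^{2},
\]
and taking square roots gives the desired estimate \eqref{strimas2}, with the constant replaced by its square root. The only point requiring attention is the weight bookkeeping together with the correct interpretation of the $L^{p+q}$/$L^{p'\cap q'}$ duality for the mixed-norm spaces; once those are checked, the proof is essentially a verbatim transcription of the argument given for Theorem \ref{T:striT}, and I do not expect any genuine obstacle beyond that.
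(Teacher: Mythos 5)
Your argument is correct and is essentially a verbatim match of the paper's own proof: the same insertion of the mutually cancelling weights $k_1^{1-\frac 2r}$ and $k_1^{1-\frac{2}{r'}}$ into the pairing $\langle\langle T_a^\star T_a(F),F\rangle\rangle_{L^2_t L^2_a}$, followed by H\"older in $x$ with respect to $d\omega_a$, the duality between $L^{q+q_\infty}_t$ and $L^{q'\cap q'_\infty}_t$ in the time variable, and an application of Theorem \ref{T:comp2}. No gaps.
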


\begin{proof}
We argue as in the proof of Theorem \ref{T:striT}, except that now we bring to the centerstage the weight $k_1(x)$, as follows 
\begin{align*}
||T_a(F)||^2_{L^2_a} & = \sa T_a(F),T_a(F)\da_{L^2_a} = \sa\sa T_a^\star(T_a(F))k_1^{1 -\frac 2r},F k_1^{\frac 2r-1}\da\da_{L^2_t L^2_a} 
\\
& \le \int_\R ||T_a^\star T_a(F)(\cdot,t) k_1^{1-\frac 2r}||_{L^r_a}\ ||F(\cdot,t) k_1^{1-\frac{2}{r'}}||_{L^{r'}_a} dt,
\end{align*}
where in the last inequality we have used H\"older with respect to the variable $x$ and the measure $d\omega_a$.
If we now use the duality between the spaces $L^{q_t +q_\infty}_t$ and $L^{q' \cap q'_\infty}_t$, we obtain from the latter estimate 
\[
||T_a(F)||^2_{L^2_a} \le C(a,r,q_\infty)\ ||\ ||T_a^\star T_a(F)(\cdot,t) k_1^{1-\frac 2r}||_{L^r_a}\ ||_{L^{q+q_\infty}_t }\ \  ||\ ||F(\cdot,t) k_1^{1-\frac{2}{r'}}||_{L^{r'}_a}\ ||_{L^{q'\cap q'_\infty}_t}.
\]
At this point, we invoke \eqref{strimas} in Theorem \ref{T:comp2} to reach the desired conclusion \eqref{strimas2}.

\end{proof}

We are now able to provide the 

\begin{proof}[Proof of Theorem \ref{T:GV2}]
Let $\vf\in \sn$, then for every $F\in \snn$, we have
\begin{align*}
|\sa\sa T_a^\star(\vf),F\da\da_{L^2_t L^2_a}| & = |\sa \vf,T_a(F)\da_{L^2_a}| \le ||\vf||_{L^2_a} ||T_a(F)||_{L^2_a}
\\
& \le   C(a,r,q_\infty)  ||\ ||F(\cdot,t) k_1^{1-\frac{2}{r'}}||_{L^{r'}_a}\ ||_{L^{q'\cap q'_\infty}_t}\ \ ||\vf||_{L^2_a},
\end{align*}    
where in the last line we have used \eqref{strimas2} from Theorem \ref{T:striT2}. Applying this inequality to the function $G(x,t) = F(x,t) k_1(x)^{1-\frac{2}{r'}}$, we obtain
\[
|\sa\sa T_a^\star(\vf) k_1^{\frac{2}{r'}-1},G\da\da_{L^2_t L^2_a}| \le C(a,r,q_\infty)  ||\ ||G(\cdot,t)||_{L^{r'}_a}\ ||_{L^{q'\cap q'_\infty}_t}\ \ ||\vf||_{L^2_a}.
\]
Taking the supremum on all $F\in \snn$, and using the duality between the spaces $L^{q_t +q_\infty}_tL^r_a$ and $L^{q' \cap q'_\infty}_tL^{r'}_a$, we find (using the observation $\frac 2{r'}-1 = 1-\frac 2r$)
\[
||\ ||T_a^\star(\vf)(\cdot,t) k_1^{1-\frac 2r}||_{L^r_a}\ ||_{L^{q+q_\infty}_t }\le C(a,r,q_\infty)  ||\vf||_{L^2_a}.
\]
Keeping in mind that 
\[
u(x,t) = S_a(t)\vf(x) = T_a^\star(\vf)(x,t),
\]
we finally reach the conclusion \eqref{striuno}.
This proves the theorem when the forcing term $F\equiv 0$. The non-homogeneous case $F\not= 0$ follows by arguing as in the end of the proof of Theorem \ref{T:GV}. 
 
\end{proof}


\section{Well-posedness}\label{S:wp}

In this section we consider the nonlinear problem \eqref{nonlin}. Our main objective is to use the results in Section \ref{S:stri} to prove global or local in time existence and uniqueness of the solution, i.e. well-posedness,  in the regime $a\ge 0$, see the precise statement in Theorem \ref{T:main}.

Before moving to the precise definition of solution for the Cauchy problem \eqref{nonlin}, we show that it    conserves  the mass.  Indeed we have the following lemma.

\begin{lemma}\label{L:mass}
Suppose $\mu = i b$, with $b\in \R$. Then the  $L^2_a$-norm of the solution of \eqref{nonlin} is time-invariant, namely
\begin{equation}\label{mass}
\frac{d}{dt}\int_0^\infty  |u(x,t)|^2 d\omega_a(x)=0.
\end{equation}
\end{lemma}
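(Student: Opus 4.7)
The plan is the standard $L^2$ conservation argument for Schrödinger-type equations, adapted to the weighted setting with Bessel operator and Neumann boundary condition. First I would differentiate $\int_0^\infty |u(x,t)|^2 d\omega_a(x)$ under the integral sign (assuming the solution is sufficiently regular so this is justified) to obtain
\[
\frac{d}{dt}\int_0^\infty |u|^2 d\omega_a = 2\operatorname{Re}\int_0^\infty (\partial_t u)\,\bar u\,d\omega_a.
\]
Then I would substitute the equation $\partial_t u = i\mathcal B_a u + ib|u|^{p-1}u$ to split the right-hand side into two pieces:
\[
2\operatorname{Re}\left[i\int_0^\infty (\mathcal B_a u)\,\bar u\,d\omega_a\right]\ +\ 2\operatorname{Re}\left[ib\int_0^\infty |u|^{p+1}\,d\omega_a\right].
\]

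The second term is immediate: $b\in\mathbb R$ and $\int_0^\infty |u|^{p+1} d\omega_a$ is a real (nonnegative) number, hence $ib\int_0^\infty |u|^{p+1}d\omega_a$ is purely imaginary and its real part is zero. For the first term, the idea is to invoke the self-adjointness of $\mathcal B_a$ on $L^2_a$ — either the unconditional one from \eqref{sa}, or the Neumann self-adjoint realization from \eqref{saa}, which applies precisely because the solution $u(\cdot,t)$ satisfies the boundary condition $\lim_{x\to 0^+} x^a\partial_x u = 0$ imposed in \eqref{nonlin}. Self-adjointness gives $\langle \mathcal B_a u, u\rangle_{L^2_a}\in \mathbb R$, so $i\langle \mathcal B_a u,u\rangle$ is purely imaginary and its real part vanishes. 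Concretely, one can verify this by integration by parts in the identity \eqref{pos}: the boundary term at $x=0$ vanishes thanks to the Neumann condition, and the boundary term at $x=\infty$ vanishes by decay, yielding $\int_0^\infty (\mathcal B_a u)\bar u\,d\omega_a = -\int_0^\infty |\partial_x u|^2 d\omega_a\in\mathbb R$.

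Combining these two observations gives $\frac{d}{dt}\int_0^\infty |u|^2 d\omega_a = 0$, which is \eqref{mass}.

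The main obstacle is not the formal calculation but its justification: one must ensure enough regularity and decay of $u(\cdot,t)$ to differentiate under the integral, to perform the integration by parts with vanishing boundary contributions at both endpoints, and to make sense of $|u|^{p-1}u$ as an element of $L^2_a$ pairable with $\bar u$. In practice this is handled by first proving the identity for smooth, rapidly decaying approximate solutions (e.g.\ in $\mathscr S^+$) and then passing to the limit using the well-posedness theory of Theorem \ref{T:main} and the continuity of the solution map; alternatively, one can work with the spectral/Hankel representation \eqref{hank}, for which the unitarity $|e^{-itx^2}|=1$ makes the conservation of the linear part transparent via Plancherel \eqref{planrad}, leaving only the nonlinear term to analyze as above.
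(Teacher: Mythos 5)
Your proposal follows exactly the paper's argument: differentiate under the integral, substitute the PDE, observe that $2\Re\, i\int_0^\infty b|u|^{p+1}\,d\omega_a=0$ since the integrand is real, and use the integration-by-parts identity \eqref{pos} to see that $\int_0^\infty \bar u\,\Ba u\,d\omega_a=-\int_0^\infty|\p_x u|^2\,d\omega_a$ is real as well. The additional remarks on justifying the formal computation go beyond what the paper records (its proof is purely formal), but the core argument is the same.
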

\begin{proof}
We have 
\begin{align*}
& \frac{d}{dt}\int_0^\infty  |u|^2 d\omega_a= 2 \Re \int_0^\infty \bar u \p_t u  d\omega_a = 2 \Re\ i \int_0^\infty \left[\bar u \Ba u + b |u|^{p+1}\right] d\omega_a
\\
& = 2 \Re\ i \int_0^\infty \left[b |u|^{p+1} - |\p_x u|^2\right] d\omega_a = 0,
\end{align*}
where in the second equality we have used the PDE in \eqref{nonlin}, and in the third one we have used \eqref{pos}.

\end{proof}
We will use Lemma \ref{L:mass} later to move from a local in time well-posedness to a global one.
Next we give  the definition  for solutions to the Cauchy problem \eqref{nonlin}. This definition is inspired by Proposition \ref{P:rep}. 
\begin{definition}\label{solution}
We say that a function $u$ is solution to the Cauchy problem \eqref{nonlin} if it satisfies the identity 
\begin{equation}\label{sol1}
u(x,t)=\int_0^\infty S_a(x,y,t) \vf(y) d\omega_a(y) + \mu \int_0^t \int_0^\infty S_a(x,y,t-\tau) |u(y,\tau)|^{p-1}u(y,\tau) d\omega_a(y) d\tau.
\end{equation}
\end{definition} 
 
We now  use appropriate scalings to classify our problem \eqref{nonlin} as either a $L^2_a$-critical or subcritical problem.
If $u$ is a solution to the nonlinear PDE in \eqref{nonlin}, then it is easy to check that $v(x,t) = \la^{-\gamma} u(\frac{x}{\la},\frac{t}{\la^2})$ is also a solution if and only if $\gamma = \frac{2}{p-1}$. Therefore, if we define   
\begin{equation}\label{rescal}
u_\lambda(x,t)=\lambda^{-\frac{2}{p-1}}u(\lambda^{-1}x,\lambda^{-2}t),\ \ \ \ \ \ \ \la>0,
\end{equation}
then $u_\lambda$ is solution to \eqref{nonlin} with initial data $\phi_\lambda(x)=\lambda^{-\frac{2}{p-1}}\phi(\lambda^{-1}x)$. Since
$$\|\phi_\lambda\|_{L^2_a}=\lambda^{-\frac{2}{p-1}+\frac{a+1}{2}}\|\phi\|_{L^2_a},$$
it is clear that $\vf_\la$ has the same $L^2_a$-norm of $\vf$ if and only if 
\[
p=1+\frac{4}{a+1}.
\]
These considerations motivate the following.

\begin{definition}\label{D:crit}
We say that the Cauchy problem \eqref{nonlin} is $L^2_a$-\emph{critical} if $p=1+\frac{4}{a+1}$. If $p<1+\frac{4}{a+1}$, then we say that it is $L^2_a$-\emph{subcritical}.
\end{definition} 

\vskip 0.2in

We are now ready to provide the
 
\vskip 0.2in

\begin{proof}[Proof of Theorem \ref{T:main}]  The method of proof is by now standard and it is based on a contraction method, see Kato's original work \cite{Kato} and \cite{Caze, MuS1} for further references. 

Assume first that the problem is $L^2_a$-critical, and therefore $p = 1 + \frac{4}{a+1}$. Since we will use the Strichartz estimates \eqref{striuno} in Theorem \ref{T:GV}, we fix a $a$-admissible pair $(q,r)$, and denote by $C = C(a,r)>0$ the corresponding universal constant in the estimate. Moreover, we indicate with $\tilde C = \tilde C(\mu,a,r)>0$ the constant appearing in \eqref{00}. We emphasize that the pair $(q,r)$ will not be changed throughout the discussion. Since we have defined our solution as a  function  that satisfies \eqref{sol1}, we are led to define the operator on  $X$  
\begin{equation}\label{Phi}
\Phi(v):= \int_0^\infty S_a(x,y,t) \vf(y) d\omega_a(y) + \mu \int_0^t \int_0^\infty S_a(x,y,t-\tau) |v(y,\tau)|^{p-1}v(y,\tau) d\omega_a(y) d\tau.
\end{equation}
Our objective is to show that, if we assume that $\|\phi\|_{L^2_a}$ is small enough, precisely, 
\begin{equation}\label{smallphi}
\|\phi\|_{L^2_a} \le \left(\frac{1}{|\mu|(2C)^p}\right)^{\frac{1}{p-1}},
\end{equation}
and take
\begin{equation}\label{rho}
\rho=2C\|\phi\|_{L^2_a},\ \ \ \text{and} \ \ \ \ \rho\le (4\tilde C)^{-\frac{1}{p-1}},
\end{equation}
if we consider the closed ball $B_\rho = \{u\in X\mid ||u||_X \leq  \rho\}\subset X$, then $\Phi:B_\rho\to B_\rho$ and 
for any $v, w \in B_\rho$ we have 
\begin{equation}\label{rest}
\|\Phi(v)-\Phi(w)\|_{X}\leq \frac12 \|v-w\|_{X}.
\end{equation}
Therefore, $\Phi$ is a contraction, hence it admits a unique fixed point $u = \Phi(u)\in B_\rho$. 

With this goal in mind, using \eqref{striuno} we have 
\begin{equation}\label{step1}\|\Phi(v)\|_{L^q_t L^r_a}\leq C\|\phi\|_{L^2_a}+C|\mu| \||v|^p\|_{L^{q'}_t L^{r'}_a}
=C\|\phi\|_{L^2_a}+C|\mu| \|v\|_{L^{pq'}_t L^{pr'}_a}^p.
\end{equation}
We now note that, if we set
$\tilde q = pq',\quad \tilde r = pr'$, 
then the pair $(\tilde q, \tilde r)$ is $a$-admissible, i.e.
\begin{equation}\label{tildes}
\frac{2}{\tilde q} = (a+1)\left(\frac 12 - \frac{1}{\tilde r}\right).
\end{equation}
In fact, since $p = 1 + \frac{4}{a+1}$, and \eqref{tildes} is equivalent to having 
$$\frac{2}{ q'}=(a+1)\left(\frac{p}{2}-\frac{1}{r'}\right),$$
we see that \eqref{tildes} is true if and only if 
\[
2 - \frac 2q = \frac{a+1}2\left(1+\frac{4}{a+1}\right) - (a+1)\left(1-\frac 1r\right),
\]
and this easily reduces to the $a$-admissibility of the pair $(q,r)$.
Hence, first taking in \eqref{step1} the supremum on all $a$-admissible pairs $(\tilde q,\tilde r)$, and then on the pairs $(q,r)$, we obtain  
\begin{equation}\label{step2}
\|\Phi(v)\|_{X}\leq C\|\phi\|_{L^2_a}+C|\mu|\|v\|^p_{X}.\end{equation}
For $\rho$ as in \eqref{rho} and $v\in B_\rho$, we now have
\[
C|\mu|\|v\|^p_{X}\le C |\mu|\rho^p \le \frac{\rho}2\ \Longleftrightarrow\ 2 C |\mu|\rho^{p-1}\le 1\ \Longleftrightarrow\ 2 C |\mu|(2C\|\phi\|_{L^2_a})^{p-1}\le 1.
\] 
Therefore, if we assume that $\|\phi\|_{L^2_a}$ satisfy \eqref{smallphi}, 
we infer that indeed $2 C |\mu|(2C\|\phi\|_{L^2_a})^{p-1}\le 1$, and since from the first equation in \eqref{rho} we have $C\|\phi\|_{L^2_a} = \rho/2$, we conclude  
from \eqref{step2} that
$$\|\Phi(v)\|_{X}\leq \rho/2+\rho/2=\rho,$$
hence $\Phi:B_\rho\rightarrow B_\rho$. We next show that \eqref{rest} holds. For this, we observe that, with $F(z) = \mu |z|^{p-1}z$, $z\in \C$, we have
\begin{equation}\label{diff0}
\Phi(v)-\Phi(w)= \int_0^t \int_0^\infty S_a(x,y,t-\tau) (F(v) - F(w))(y,\tau) d\omega_a(y) d\tau.
\end{equation}
From \eqref{diff0} and the  Strichartz estimates  \eqref{striuno} again, we have
\begin{equation}\label{diffvw}
\|\Phi(v)-\Phi(w)\|_{L^q_t L^r_a}\leq C ||F(v) - F(w)||_{L^{q'}_t L^{r'}_a},
\end{equation}
where $C>0$ is the same constant as in \eqref{smallphi}.
Observing that there exists $C(p)>0$ such that
\[
|F(v) - F(w)| \le |\mu| C(p)\left[|v|^{p-1} + |w|^{p-1}\right] |v-w|,
\]
by an application of H\"older inequality, we obtain 
\begin{align*}
& ||F(v) - F(w)||_{L^{q'}_t L^{r'}_a} \le |\mu| C(p)\left(\int_\R ||\left[|v|^{p-1} + |w|^{p-1}\right] |v-w|\ ||^{q'}_{L^{r'}_a} dt\right)^{1/{q'}}
\\
& = |\mu| C(p)\left(\int_\R \left(\int_{\R^+} \left[|v|^{p-1} + |w|^{p-1}\right]^{r'} |v-w|^{r'} d\omega_a(y)\right)^{\frac{q'}{r'}} dt\right)^{1/{q'}}
\\
& \le |\mu| C(p)\left(\int_\R \left[\left(\int_{\R^+} \left[|v|^{p-1} + |w|^{p-1}\right]^{p' r'} d\omega_a(y)\right)^{1/{p'r'}} \left(\int_{\R^+} |v-w|^{pr'} d\omega_a(y)\right)^{1/{pr'}}\right]^{q'} dt\right)^{1/{q'}}
\\
& \le C(\mu,r,p) \left(\int_\R \left[||v||^{p-1}_{L^{pr'}_a} + ||w||^{p-1}_{L^{pr'}_a}\right]^{q'} ||v-w||^{q'}_{L^{pr'}_a}dt\right)^{1/{q'}}
\\
& \le  C(\mu,r,a) \left(\int_\R \left[||v||^{p-1}_{L^{pr'}_a} + ||w||^{p-1}_{L^{pr'}_a}\right]^{p'q'}dt\right)^{1/{p'q'}} \left(\int_\R  ||v-w||^{pq'}_{L^{pr'}_a}dt\right)^{1/{pq'}},
\end{align*}
where in the last inequality we have underscored the fact that, being $p = p(a)$, we can write $C(\mu,r,p) = C(\mu,r,a)$.
Since as we have shown the pair $\tilde q = pq',\quad \tilde r = pr'$ is $a$-admissible, taking suprema in the latter inequality, and substituting in \eqref{diffvw}, we finally obtain with $\tilde C = C(\mu,r,a) C$,
\begin{equation}\label{00}
\|\Phi(v)-\Phi(w)\|_{L^q_t L^r_a} \leq  \tilde C(|\|v\|^{p-1}_{X}+|\|w\|^{p-1}_{X})|\|v-w\|_{X}.
\end{equation}
If we now keep in mind that $v$ and $w$ belong to the ball $B_\rho$, we obtain from this estimate
\[
\|\Phi(v)-\Phi(w)\|_{L^q_t L^r_a}\leq 2 \tilde  C\rho^{p-1}|\|v-w\|_{X}.
\]
Since by \eqref{rho} we have $2 \tilde  C\rho^{p-1}\le \frac 12$, the desired conclusion \eqref{rest} follows. This shows that $\Phi$ is a contraction on $B_\rho$, hence we have a unique fixed point $u \in  B_\rho$. As a consequence, such $u$ is a unique global in time solution.    

We now want to show that $u\in L^\infty_tL^2_a$. Using the fact that $u = \Phi(u)$, where $\Phi(u)$ is as in \eqref{Phi}, and the trivial observation $|F(u)| \le |\mu| |u|^p$, we obtain from Proposition \ref{P:uni} for any fixed $t$
\begin{equation}\label{infty}
\|u(\cdot,t)\|_{L^2_a}\leq \|\phi\|_{L^2_a}+C\int_0^\infty\left(\int_0^\infty |u(x,\tau)|^{2p} d\omega_a(x)\right)^{1/2}d\tau=\|\phi\|_{L^2_a}+C\|u\|_{L^p_tL^{2p}_a}^p.
\end{equation}
Now we make the crucial observation that, since
\[
\frac{2}{p}=(a+1)\left(\frac{1}{2}-\frac{1}{2p}\right)\ \Longleftrightarrow\  p=1+\frac{4}{a+1},
\]
the fact that the problem \eqref{nonlin} is $L^2_a$-\emph{critical} implies that the pair $(p,2p)$ be $a$-admissible. Since $u\in B_\rho\subset X$, taking first the supremum on all $a$-admissible pairs in \eqref{infty}, and then the supremum in $t\in \R$, we thus find
\begin{equation}\label{useful}
\sup_{t\in \R_+}\|u(\cdot,t)\|_{L^2_a}\leq \|\phi\|_{L^2_a} + C\|u\|^p_{X}.
\end{equation}  
Finally, we prove that the solution $u$ is continuous in time into the space $L^2_a$. We first show that the linear part is continuous, namely 
\begin{equation}\label{contL}
\lim_{|t-\tau|\rightarrow 0}\|S_a(t)\phi-S_a(\tau)\phi\|_{L^2_a}=0.
\end{equation}
By \eqref{hank} we have 
$$\mathcal H_{\frac{a-1}2}( S_a(t) \vf -S_a(\tau) \vf)(x)=(e^{-itx^2}-e^{-i\tau x^2})\mathcal H_{\frac{a-1}2}( \vf)(x).$$
Combined with the Plancherel type formula \eqref{planrad}, this gives
\begin{eqnarray*}
\lim_{|t-\tau|\rightarrow 0}\|S_a(t)\phi-S_a(\tau)\phi\|_{L^2_a}^2&=&\lim_{|t-\tau|\rightarrow 0}\int_0^\infty |\mathcal H_{\frac{a-1}2}( S_a(t) \vf(x) -S_a(\tau) \vf (x) )|^2d\omega_a\\
&=&
\lim_{|t-\tau|\rightarrow 0}\int_0^\infty |e^{-itx^2}-e^{-i\tau x^2}|^2|\mathcal H_{\frac{a-1}2}( \vf)(x)|^2d\omega_a=0,
\end{eqnarray*}
where in the last equality we have used dominated convergence.
For the non-homogeneous part it is enough to prove that for $ t\leq \tau$
$$\lim_{|t-\tau|\rightarrow 0}\int_t^\tau \||u(\cdot,s)|^{p-1}u(\cdot,s)\|_{L^2_a}ds=0,$$
but this comes from the fact that, as proved in  \eqref{useful},
$$\int_0^\infty \||u(\cdot,s)|^{p-1}u(\cdot,s)\|_{L^2_a}ds\leq \|\phi\|_{L^2_a} + \|u\|_{X}^p.$$

\medskip
We now consider the subcritical case, namely $1<p<1 + \frac{4}{a+1} = \frac{a+5}{a+1}$.  The argument to show contraction is similar, but with some slight difference. Let us go back to \eqref{step1}. In the subcritical case it is no longer true that $(pq',pr')$ is $a$-admissible, if such is the pair $(q,r)$. If in fact we let as before $\tilde r = p r'$, and we consider $\tilde q$ for which $(\tilde q,\tilde r)$ is $a$-admissible, then since $\frac p2 < \frac 12 + \frac{2}{a+1}$, we must have 
\begin{align*}
\frac{2}{\tilde q} & = (a+1)\left(\frac 12 - \frac{1}{\tilde r}\right) =  \frac{a+1}p\left(\frac p2 - \frac{1}{r'}\right)< \frac{a+1}p\left(\frac 12 + \frac{2}{a+1} - 1 + \frac1r\right)
\\
&  = - \frac{a+1}p\left(\frac 12 - \frac 1r\right) + \frac 2p = - \frac{2}{pq} +  \frac 2p = \frac{2}{pq'},
\end{align*}
which shows that $\tilde q> pq'$. If we are on a finite time-interval $[0,T]$, then H\"older inequality gives
\[
\left(\frac 1T \int_0^T ||v||^{pq'}_{L^{pr'}_a} dt\right)^{1/{pq'}} \le \left(\frac 1T \int_0^T ||v||^{\tilde q}_{L^{pr'}_a} dt\right)^{1/{\tilde q}},
\]
which implies  
$$\|v\|_{L^{pq'}_{[0,T]} L^{pr'}_a}^p\leq T^\delta \|v\|_{L^{\tilde q}_{[0,T]} L^{\tilde r}_a}^p\leq  T^\delta \|v\|_{X_T}^p,$$
for $\delta = \frac{1}{pq'}-\frac{1}{\tilde q} >0$.
With $\Phi(v)$ defined as in \eqref{Phi}, and $C>0$ as in the Strichartz estimate \eqref{striuno} in Theorem \ref{T:GV}, we can thus replace \eqref{step2} by 
\begin{equation}\label{step3}
\|\Phi(v)\|_{X_{T}}\leq C\|\phi\|_{L^2_a}+C|\mu|T^{\delta}\|v\|^p_{X_T},
\end{equation}
for some $\delta>0$. We stress that now $C = C(a,\tilde r) = C(a,r,p)$.
As in the $L^2_a$-critical case, we set the radius of the ball $B_\rho = \{u\in X_T\mid ||u||_{X_T} \leq  \rho\}$ to be $\rho=2C\|\phi\|_{L^2_a}$ (see \eqref{rho}), but we now use the time $T$ to contract. Namely, as long as 
\begin{equation}\label{time}
C|\mu| T^\delta \rho^p = C|\mu|T^\delta(2C\|\phi\|_{L^2_a})^{p-1}\leq 1/2,
\end{equation}
we infer from \eqref{step3} that $ \|\Phi(v)\|_{X_{T}}\leq \rho$, and similarly, by possibly restricting $T$ even further, we can replace \eqref{rest} with 
$$\|\Phi(v)-\Phi(w)\|_{X_{T}}\leq \frac 12 \|v-w\|_{X_T}.
$$
These estimates imply, as before, the existence of a unique fixed point 
\[
u = \Phi(u)\in B_\rho\subset X_T.
\]
We claim that $u$ is actually the unique solution in the whole space $X_T$. Suppose, in fact, there is another solution 
$v\in X_T$. Given $\gamma>0$, for $t\in [0,\gamma]$. We have from \eqref{sol1},
\begin{equation}\label{diff0}
u-v=\Phi(u)-\Phi(v)= \mu\int_0^t \int_0^\infty S_a(x,y,t-\tau) (|u|^{p-1}u- |v|^{p-1}v)(y,\tau) d\omega_a(y) d\tau.
\end{equation}
Proceeding as above we can estimate  
$$\|u-v\|_{X_\gamma}\leq  C\gamma^{\delta}(\|v\|^{p-1}_{X_T} +   \|u\|^{p-1}_{X_\gamma}) \|u-v\|_{X_\gamma}\leq C\gamma^{\delta}(\|v\|^{p-1}_{X_T} +  
 \rho^{p-1}) |\|u-v\|_{X_\gamma}.$$
Let now $\gamma$ be such that $C\gamma^{\delta}(\|v\|^{p-1}_{X_T} +  
 \rho^{p-1})<1/2$, then we have that $u=v$ almost everywhere in the strip $\R\times[0,\gamma]$. By iterating $\cong T/\gamma$ times, we conclude $u=v$ a.e. in $\R\times[0,T]$.

We now pass to global well-posedness under the assumption that in the Cauchy problem  \eqref{nonlin} the constant $\mu$ is purely imaginary. Note that \eqref{time} shows that  $T=T(\|\phi\|_{L^2_a})$. At this point, we solve a similar Cauchy problem in $\R\times[T,2T]$, with initial datum $\vf_1 = u(\cdot,T)$. Since by the conservation of mass in Lemma \ref{L:mass}, we have $\|u(\cdot,T)\|_{{L^2_a}}= \|\phi\|_{L^2_a}$, we can advance up to time  $2T$. As a consequence, for any fixed arbitrary time interval $[0,M]$, with about $M/T$ steps we obtain a unique solution on $[0,M]$.  

\end{proof}


\section{Comparison with other Schr\"odinger equations on the half-line}\label{S:comp}

In the literature there exist several differential operators on the half-line, each one of them being treated by ad hoc methods. The aim of this section is to discuss the connection between the Bessel operator \eqref{Ba} and two other models.  
\subsection{The Kimura model from population biology}\label{EM} In \cite{EM} Epstein and Mazzeo consider the operator on $(0,\infty)$
\begin{equation}\label{Lb}
\mathscr L_b = y \frac{\p^2}{\p y^2} + b \frac{\p}{\p y},\ \ \ \ \ \ b>0,
\end{equation}
and they construct the heat kernel for the semigroup $e^{-t \mathscr L_b}$ under Feller's zero flux condition
\begin{equation}\label{zeroflux}
\underset{y\to 0^+}{\lim}\ y^b \p_y v(y,t) = 0.
\end{equation} 
The model \eqref{Lb} is the simplest prototype of a Kimura operator and it is related, via a change of variable, to the Wright-Fisher diffusion process associated with a class of degenerate parabolic equations from population biology. In \cite[Prop. 5.1]{Gams} it was observed that the change of variable
\begin{equation}\label{tran}
y = \frac{x^2}4,\ \ \ \ \ \ \ \ \ \ \ b = \frac{a+1}2,
\end{equation}
sends in a one-to-one, onto fashion solutions of \eqref{cp0} with $a>-1$ into solutions of \eqref{Lb}. One has in fact
\begin{equation}\label{BaLb}
u_t(x,t) - \Ba u(x,t) = v_t(y,t) - \mathscr L_b v(y,t).
\end{equation}
Furthermore, one easily verifies that
\begin{equation}\label{nanb}
x^a \p_x u(x,t) = 2^{2b-1} y^b \p_y v(y,t).
\end{equation}
Therefore, the Neumann condition in \eqref{cp0} is transformed into \eqref{zeroflux}. As a consequence, the results in this paper provide corresponding ones for solutions of the Cauchy problem in $\R^+$
\[
\p_t v - i \mathscr L_b v = G(y,t),\ \ \ \ \ \ v(y,0) = \psi(y),
\]
under the Neumann condition \eqref{zeroflux}.

\subsection{The inverse square potential}\label{Hardy} The most studied model in quantum mechanics is the scaling critical Schr\"odinger operator $H = \Delta - \frac{c}{|x|^2}$. Higher-dimensional Strichartz estimates for this operator were proved in \cite{BPST1, BPST2, FFFP}. We also mention the works \cite{GS, RS} for optimal borderline dispersive estimates for a generic potential $V(x)$ on the whole line and in $\R^3$. On the half-line $\R^+$ the operator $H$ is often written  
\begin{equation}\label{Hm}
\mathcal H_\ell v = \p_{xx} v - \frac{\ell(\ell+1)}{x^2} v,
\end{equation}
where $\ell$ represents angular momentum. It is well-known that $\mathcal H_\ell$ is limit-circle at $x=0$ if and only if $-\frac 32 <\ell < \frac 12$, see \eqref{vs} and \cite{Se}. Therefore 
\begin{equation}\label{ssa}
\mathcal H_\ell\ \  \text{is self-adjoint in}\ \  L^2(\R+)\ \Longleftrightarrow\  (\ell+\frac 12)^2 \ge 1\ \Longleftrightarrow\ \ell\in (-\infty,-\frac 32] \cup [\frac 12,\infty).
\end{equation}
 The transformation 
\begin{equation}\label{uv}
u(x) = x^{-\frac{a}2} v(x),
\end{equation}
converts $\Ba$ into the Hardy operator \eqref{Hm} (see \cite[p.562]{Ze} and also \cite[1.15.7, p.65]{BGL} for a nice account of how to remove a drift), with $a$ and $\ell$ connected by the equation
\[
\frac a2(\frac a2 -1) = \ell(\ell+1).
\]
One has in fact the notable identity
\begin{equation}\label{remove}
x^{\frac{a}2} \Ba u =  \p_{xx} v - \frac a2\left(\frac a2-1\right)\frac{v}{x^2},
\end{equation}  
which shows that solutions of $\Ba u = 0$ go into solutions of $\mathcal H_\ell v = 0$ if and only if
\begin{equation}\label{ella}
(\ell+\frac 12)^2 = (\frac{a-1}2)^2\ \ \ \Longleftrightarrow\ \ \ \ell = \frac a2-1\ \ \  \text{or}\ \ \ \ell = - \frac a2.
\end{equation}
In particular, if $\ell = \frac a2-1$, the  solutions $u_1(x) = 1$ and $u_2(x) = x^{1-a}$ of \eqref{Ba} respectively go into the linearly independent ones of $\mathcal H_\ell$
\begin{equation}\label{vs}
v_1(x) = x^{\ell+1},\ \ \ \ \ \ \ v_2(x) = x^{-\ell}.
\end{equation}
When $\ell = - \frac a2$ this correspondence is reversed.
Note that \eqref{uv} establishes a unitary map between $L^2_a$ and $L^2(\R^+)$, and therefore in view of \eqref{ella} we have: \eqref{sa}\ $\Longleftrightarrow$\ \eqref{ssa}.
When 
\begin{equation}\label{range}
- \frac 32 < \ell < \frac 12,
\end{equation}
both solutions $v_1$ and $v_2$ belong to $L^2$ near $x=0$, therefore boundary conditions must be imposed in order to achieve self-adjointness. The literature on the spectral properties of $\mathcal H_\ell$ is quite vaste. A short list of references is \cite{EK, BDG, AB, DG, GPS, GNZ}. Observe that the transformation \eqref{uv} converts the problem \eqref{cp0} into 
\begin{equation}\label{cp00}
\begin{cases}
\p_t v - i \mathcal H_\ell v = G(x,t),\ \ \ \ \ x\in \R^+,\ t\in \R,
\\
\underset{x\to 0^+}{\lim} x^{\frac{a}2-1}\left[x \p_x v(x,t) - \frac{a}{2} v(x,t)\right] = 0,\ \ \ \ t\in \R,
\\
v(x,0) = \psi(x),
\end{cases}
\end{equation}
where $G(x,t) = x^{\frac{a}2} F(x,t)$, $\psi(x) = x^{\frac{a}2} \vf(x)$. We also note that the boundary condition in \eqref{cp00} expresses the vanishing of the Wronskian $W[x^{\frac a2},v]$ as $x\to 0^+$. Depending on the value of $\ell$ in \eqref{ella}, one has
\begin{equation}\label{uffa} 
\begin{cases}
\underset{x\to 0^+}{\lim} W[x^{\ell+1},v] = \underset{x\to 0^+}{\lim} x^{\ell}\left[x \p_x v(x,t) - (\ell+1) v(x,t)\right] = 0,\ \ \ \ \ \ \ \ \ \text{when}\ \ \ \ell = \frac a2 -1,
\\
\underset{x\to 0^+}{\lim} W[x^{-\ell},v] = \underset{x\to 0^+}{\lim}  x^{-\ell-1}\left[x \p_x v(x,t) + \ell v(x,t)\right] = 0,\ \ \ \ \ \ \ \ \ \text{when}\ \ \ \ell = - \frac a2.
\end{cases}
\end{equation}
We have the following.
\begin{proposition}\label{P:janus}
The kernel $S_a(x,y,t)$ in \eqref{SaS0}, associated with the problem \eqref{cp0}, and the kernel $H_\ell(x,y,t)$, associated with the problem \eqref{cp00} when $\ell = - \frac a2$, are linked by the equation:
\begin{equation}\label{conne}
S_a(x,y,t) = (xy)^{-\frac a2} H_\ell(x,y,t).
\end{equation}  
\end{proposition}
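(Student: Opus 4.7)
\smallskip

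\noindent\textbf{Proof plan.} The proof rests on using the intertwining transformation \eqref{uv} to convert solutions of \eqref{cp0} into solutions of \eqref{cp00}, and then appealing to the uniqueness of the representation formula in Proposition \ref{P:rep} to identify the two kernels.

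\emph{Step 1 (intertwining at the PDE level).} I would first verify that the substitution $u(x,t) = x^{-a/2} v(x,t)$ maps \eqref{cp0} (with $F\equiv 0$) bijectively onto \eqref{cp00} (with $G\equiv 0$) precisely when $\ell = -a/2$. At the level of the operator this is exactly \eqref{remove}, which yields $x^{a/2}(\partial_t - i\mathcal B_a)u = (\partial_t - i\mathcal H_\ell)v$. At the level of the boundary condition, a direct computation gives
\[
x^a\,\partial_x u \;=\; x^{a/2-1}\!\left[x\,\partial_x v - \tfrac{a}{2}\,v\right],
\]
so that the Neumann condition in \eqref{cp0} is equivalent to the second line of \eqref{uffa} with $\ell=-a/2$, which is the boundary condition imposed in \eqref{cp00}. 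The initial data transform as $\psi(x) = x^{a/2}\vf(x)$.

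\emph{Step 2 (two representations for the same solution).} For $\vf\in\sn$, Proposition \ref{P:rep} gives the unique solution of \eqref{cp0} as
\[
u(x,t) \;=\; \int_0^\infty S_a(x,y,t)\,\vf(y)\,y^a\,dy .
\]
On the other hand, denote by $H_\ell(x,y,t)$ the fundamental solution of \eqref{cp00}, so that the unique solution with datum $\psi$ is
\[
v(x,t) \;=\; \int_0^\infty H_\ell(x,y,t)\,\psi(y)\,dy \;=\; \int_0^\infty H_\ell(x,y,t)\,y^{a/2}\,\vf(y)\,dy.
\]
By Step 1 these two solutions must satisfy $u(x,t) = x^{-a/2} v(x,t)$, whence
\[
\int_0^\infty S_a(x,y,t)\,y^a\,\vf(y)\,dy \;=\; x^{-a/2}\!\int_0^\infty H_\ell(x,y,t)\,y^{a/2}\,\vf(y)\,dy
\]
for every $\vf\in\sn$. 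Since $\sn$ separates points, one concludes $S_a(x,y,t)\,y^a = x^{-a/2}H_\ell(x,y,t)\,y^{a/2}$ for a.e.\ $y$, which rearranges to \eqref{conne}.

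\emph{Main obstacle.} The delicate point is giving a clean meaning to the kernel $H_\ell(x,y,t)$ for \eqref{cp00}: the operator $\mathcal H_\ell$ is in the limit-circle regime at $x=0$ precisely when $-1<a<3$, the same regime in which the Neumann condition is needed for $\mathcal B_a$, and one must match the self-adjoint extensions on both sides. Rather than constructing $H_\ell$ independently, the cleanest route is to \emph{define} $H_\ell(x,y,t) := (xy)^{a/2} S_a(x,y,t)$ and then use the intertwining of Step 1 to verify, via the properties of $S_a$ already established (symmetry, the limiting behavior \eqref{pazero}--\eqref{dzpazero}, the Duhamel formula, and the boundary identity just computed), that this $H_\ell$ is indeed the fundamental solution of \eqref{cp00} with the boundary condition in \eqref{uffa}. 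This reduces Proposition \ref{P:janus} to a bookkeeping verification once the transformation \eqref{uv} has been translated to the kernel level.
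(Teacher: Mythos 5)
Your proposal is correct and follows essentially the same route as the paper: the paper likewise starts from the intertwining $u = x^{-a/2}v$ (already set up via \eqref{remove} and \eqref{uffa}), writes $u$ through the representation formula with kernel $S_a$, and rewrites the integral to read off $H_\ell(x,y,t) = (xy)^{a/2}S_a(x,y,t)$, exactly the definition-and-verify route you advocate in your final paragraph. Your boundary-condition computation $x^a\partial_x u = x^{a/2-1}\bigl[x\,\partial_x v - \tfrac a2 v\bigr]$ matches the identity the paper records just before the proposition.
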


\begin{proof}
We note explicitly that the boundary condition that we are imposing is the second one in \eqref{uffa}. To prove \eqref{conne}, suppose that $\psi\in L^2(\R^+)$ be given, and consider $\vf = x^{-\frac a2} \psi$.
Since one obviously has
\[
||\psi||_{L^2(\R^+)} = ||\phi||_{L^2_a},
\]
and since, as we have already observed, the function $u(x,t) = x^{-\frac a2} v(x,t)$ solves \eqref{cp0}, we have
\[
u(x,t) = \int_0^\infty S_a(x,y,t) \vf(y) y^a dy.
\]
This formula can be rewritten as
\[
v(x,t) = x^{\frac a2} \int_0^\infty S_a(x,y,t) \psi(y) y^{\frac a2} dy = \int_0^\infty H_{\ell}(x,y,t) \psi(y) dy,
\]
if $\ell = - \frac a2$ and $H_{\ell}(x,y,t) = (xy)^{\frac a2} S_a(x,y,t)$.

\end{proof}

We mention that, using the spectral analysis of $\mathcal H_\ell$, 
in \cite[Lemma 3.1]{HKT} Holzleitner, Kostenko \& Teschl computed the kernel of the operator $e^{i t \mathcal H_\ell}$, under the second boundary condition in \eqref{uffa}, for $\ell$ in the range \eqref{mezzi}
\begin{equation}\label{mezzi}
- \frac 12 < \ell < \frac 12.
\end{equation}
Since in this framework one must use the case $\ell = - \frac a2$ in \eqref{uffa}, note that the range \eqref{mezzi} corresponds to our $-1<a<1$. For such range, their formula
\begin{equation}\label{uffa2}
H_\ell(x,y,t) = \frac{e^{i\frac{\pi}2(\ell-\frac 12)}}{2t} (xy)^{1/2} J_{-\ell-\frac 12}(\frac{xy}{2t})\ e^{i\frac{x^2+y^2}{4t}},
\end{equation}
is in perfect accordance with that obtained by substituting our kernel \eqref{SaS0} in \eqref{conne} in Proposition \ref{P:janus}, i.e.,
\begin{equation}\label{Hl}
H_\ell(x,y,t) = (xy)^{\frac a2} S_a(x,y,t),\ \ \ \ \ \ \ell = - \frac a2.
\end{equation}
Combining \eqref{Hl} with \eqref{notsogoodone} in Lemma \ref{L:dis} we obtain the following dispersive estimate for the group $e^{itH_\ell}$ in the range $0<\ell<1/2$:
\begin{equation}\label{disHl}
|| e^{itH_\ell} \psi||_{L^\infty(\R^+, dn_\ell)} \le C(\ell) \left\{|t|^{\ell - \frac 12} + |t|^{-\frac{1}2}\right\} ||\psi||_{L^1(\R^+,dm_\ell)}, \ \ \ t\not= 0,
\end{equation}
where we have defined 
\[
dn_\ell(x) = \min\{1,x^\ell\} dx,\ \ \ \ \text{and}\ \ \ \ \  dm_\ell(x) = \max\{1,x^{-\ell}\} dx.
\]
The estimate \eqref{disHl} should be compared to \cite[(1.8), p.770]{HKT}.
If instead we combine \eqref{Hl} with \eqref{good} in Lemma \ref{L:dis}, we obtain for the range $-\frac 12<\ell \le 0$
\begin{equation}\label{goody}
||e^{it H_\ell}||_{L^1(\R^+)\to L^\infty(\R^+)}  \le C |t|^{-1/2},\ \ \ \ |t|\not= 0.
\end{equation}
In connection with these results, we also cite the more recent works \cite{KTT, GHT}. Finally, we mention that, again using spectral analysis, in \cite[(3.23), p.175]{KoT} Kova\v{r}\'ik \& Truc computed the kernel of $e^{it \mathcal H_\ell}$  under the former of the boundary conditions in \eqref{uffa} (this corresponds to taking $\ell = \frac a2 -1$). They have no restrictions on the value of $\ell$, but since they consider the Friedrichs extension, they are imposing a Dirichlet boundary condition. In the unweighted case $s=0$ of their \cite[Theor. 2.4]{KoT} they established the following dispersive estimate
\begin{equation}\label{KoT}
||e^{it H_\ell}||_{L^1(\R^+)\to L^\infty(\R^+)}  \le C |t|^{-1/2},\ \ \ \ |t|\not= 0.
\end{equation}


\vskip 0.2in


\section{Appendix}\label{S:app}

Consider the Bessel equation of order $\nu\in \mathbb C$
\begin{equation}\label{besseleq}
z^2 \frac{d^2 J}{dz^2} + z \frac{dJ}{dz} + (z^2 - \nu^2)J = 0.
\end{equation}
Assume $\nu\not=\mathbb N\cup\{0\}$. Then, two linearly independent solutions of \eqref{besseleq} are the Bessel function of the first kind and order $\nu$ 
\begin{equation}\label{besseries}
J_\nu(z) = \sum_{k=0}^\infty \frac{(-1)^k}{\G(k+1) \G(\nu+k+1)} \left(\frac z2\right)^{2k+\nu}\ \ \ \ \ \ \ \ |\arg z|<\pi,
\end{equation}
and the function 
\begin{equation}\label{negbes}
J_{-\nu}(z)=\sum_{k=0}^\infty \frac{(-1)^k}{\G(k+1) \G(k+1-\nu)} \left(\frac z2\right)^{2k-\nu}\ \ \ \ \ \ \ \ |\arg z|<\pi.
\end{equation}
The Wronskian of the functions $J_\nu$ and $J_{-\nu}$ satisfies the equation (see \cite[(5.9.1), p. 113]{Le})
\begin{equation}\label{wronJ}
W\left[J_\nu,J_{-\nu}\right] = - \frac{2\sin(\nu \pi)}{\pi z},
\end{equation}
which shows that these functions are linearly independent if $\nu\not\in \mathbb Z$.
It is well-known that
\begin{equation}\label{half}
J_{-1/2}(x) = \sqrt{\frac{2}{\pi x}} \cos x,\ \ \ \ \ J_{1/2}(x) = \sqrt{\frac{2}{\pi x}} \sin x.
\end{equation}
From \eqref{besseries}, \eqref{negbes} we have as $z\to 0$, $|\arg z|<\pi$,
\begin{equation}\label{Js}
J_{\nu}(z) \cong \frac 1{\G(\nu+1)} \left(\frac{z}{2}\right)^{\nu},\ \ \ \ J_{-\nu}(z) \cong \frac 1{\G(1-\nu)} \left(\frac{z}{2}\right)^{-\nu}.
\end{equation}
The asymptotic behavior for large $z$ is much more delicate. One has for $\Re\nu>-\dfrac12$, and $0<\delta<\pi$
\begin{align}\label{jnuinfty} 
J_\nu(z)&=\sqrt{\frac2{\pi
z}}\cos\left(z-\frac{\pi\nu}2-\frac\pi4\right)+
O(z^{-\frac32})\\
&\quad\text{as }|z|\to\infty,\quad-\pi+\delta<\arg z<\pi-\delta.
\notag
\end{align}
Consider now the generalized Bessel equation 
\begin{equation}\label{genbessel}
z^2 \Psi''(z) + (1 - 2\alpha) z \Psi'(z) + \left[\beta^2 \gamma^2
z^{2\gamma} + (\alpha^2 - \nu^2 \gamma^2)\right] \Psi(z) = 0,
\end{equation}
see \cite[(5.4.11), p.106]{Le}.
If $J(z)$ is a solution to 
\eqref{besseleq}, then the function defined by the
transformation
\begin{equation}\label{besselcv}
\Psi(z) = z^\alpha J(\beta z^\gamma)
\end{equation}
solves the equation \eqref{genbessel}. Another solution to the equation \eqref{besseleq} is given by the Bessel function of the second kind
\begin{equation}\label{Y}
Y_\nu(z) =  \frac{J_\nu(z) \cos(\nu\pi) - J_{-\nu}(z)}{\sin(\nu\pi)}.
\end{equation}
From \eqref{Js}, \eqref{Y}, we see that
\begin{equation}\label{Ys}
z^\nu Y_{\nu}(z)\ \underset{z\to 0}{\longrightarrow}\ - \frac {2^\nu}{\sin(\nu\pi)\G(1-\nu)} = - \frac {2^\nu\G(\nu)}{\pi},
\end{equation}
where in the last equality we have used \cite[(1.2.2), p.3]{Le}.
We need the following result.

\begin{proposition}\label{P:ef}
Assume that $a>-1$. For $\kappa>0$ the equation $\Ba f = - \kappa f$ on $\R^+$ admits the linearly independent solutions
\begin{equation}\label{ass}
f_1(x) = x^{\frac{1-a}2} J_{\frac{a-1}2}(\sqrt \kappa x),\ \ \ \ \ \ \emph{and}\ \ \ \ f_2(x) = x^{\frac{1-a}2} Y_{\frac{a-1}2}(\sqrt \kappa x).
\end{equation} 
Only $f_1$ satisfies the Neumann condition $x^a f_1'(x)\to 0$ as $x\to 0^+$.
\end{proposition}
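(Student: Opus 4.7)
The plan is to reduce the ODE $\Ba f = -\kappa f$ to the standard Bessel equation via the transformation \eqref{besselcv} recalled in the appendix. Rewriting $\Ba f + \kappa f = 0$ as
\[
x^2 f''(x) + a\, x f'(x) + \kappa x^2 f(x) = 0,
\]
I would match it with the generalized Bessel equation \eqref{genbessel} by choosing $\alpha = (1-a)/2$, $\gamma = 1$, $\beta = \sqrt{\kappa}$ and $\nu = (a-1)/2$, which satisfy $1-2\alpha = a$, $\beta^2\gamma^2 = \kappa$ and $\alpha^2 - \nu^2\gamma^2 = 0$. Then \eqref{besselcv} says that, for any solution $J$ of the Bessel equation \eqref{besseleq} of order $\nu$, the function $x^{(1-a)/2} J(\sqrt{\kappa}\,x)$ solves our equation. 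Taking $J = J_\nu$ and $J = Y_\nu$ produces exactly the two functions $f_1$ and $f_2$ in \eqref{ass}. Linear independence is then immediate from the classical Wronskian $W[J_\nu,Y_\nu](z) = 2/(\pi z)$: writing $f_i(x) = x^{-\nu} g_i(\sqrt{\kappa}\,x)$ with $g_1 = J_\nu$, $g_2 = Y_\nu$, the chain rule gives
\[
W[f_1,f_2](x) = x^{-2\nu}\sqrt{\kappa}\, W[J_\nu,Y_\nu](\sqrt{\kappa}\,x) = \frac{2}{\pi}\, x^{-2\nu-1} = \frac{2}{\pi}\, x^{-a},
\]
so that $x^a W[f_1,f_2](x) \equiv 2/\pi$ on $\R^+$. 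This identity will also be the crux of the last step.

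To verify the Neumann condition for $f_1$, I would invoke the standard formula $\frac{d}{dz}[z^{-\nu}J_\nu(z)] = -z^{-\nu} J_{\nu+1}(z)$. Writing $f_1(x) = \kappa^{\nu/2}(\sqrt{\kappa}\,x)^{-\nu} J_\nu(\sqrt{\kappa}\,x)$ and differentiating yields, after simplification,
\[
x^a f_1'(x) = -\sqrt{\kappa}\, x^{(a+1)/2} J_{(a+1)/2}(\sqrt{\kappa}\,x),
\]
which by the small-argument asymptotic \eqref{Js} applied with index $(a+1)/2$ vanishes like $x^{a+1}$ as $x\to 0^+$, since $a+1 > 0$. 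The same \eqref{Js} shows $f_1(0^+) = \kappa^{\nu/2}/(2^\nu \G(\nu+1))$ is a finite \emph{nonzero} constant.

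To show $f_2$ does not satisfy the Neumann condition, rather than running through three separate asymptotic regimes of $Y_\nu$ at the origin (namely $\nu > 0$, the logarithmic case $\nu = 0$ corresponding to $a=1$, and $-1 < \nu < 0$), I would exploit the Wronskian identity above by rewriting it as
\[
f_1(x)\,\bigl(x^a f_2'(x)\bigr) - \bigl(x^a f_1'(x)\bigr)\, f_2(x) = \frac{2}{\pi}.
\]
The Frobenius analysis of $\Ba f + \kappa f = 0$ at $x=0$ (whose indicial roots are $0$ and $1-a$) forces $f_2$ to behave near the origin like $O(x^{1-a}) + O(1)$ for $a\neq 1$, and like $O(\log x) + O(1)$ for $a = 1$. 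In every regime, combining this with $x^a f_1'(x) = O(x^{a+1})$ from the previous step gives $(x^a f_1'(x))f_2(x) \to 0$ as $x\to 0^+$. Passing to the limit then forces $x^a f_2'(x) \to 2/(\pi f_1(0^+)) \ne 0$, which completes the proof. The main obstacle I anticipate is the uniform handling of the three indicial regimes in this last step; the Wronskian shortcut is designed precisely to dispatch them cleanly without a separate case analysis on $Y_\nu$.
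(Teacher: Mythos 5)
Your proposal is correct, and its first half (reduction to the generalized Bessel equation via $\alpha=\tfrac{1-a}{2}$, $\gamma=1$, $\beta=\sqrt\kappa$, $\nu=\tfrac{a-1}{2}$, and the verification that $x^af_1'(x)=-\sqrt\kappa\,x^{\nu+1}J_{\nu+1}(\sqrt\kappa x)=O(x^{a+1})$) coincides with the paper's argument. Where you genuinely diverge is in the treatment of $f_2$: the paper simply applies the companion identity $\frac{d}{dz}[z^{-\nu}Y_\nu(z)]=-z^{-\nu}Y_{\nu+1}(z)$ and the small-argument asymptotics \eqref{Ys} of $Y_{\nu+1}$ --- which require no case analysis, since $\nu+1=\tfrac{a+1}{2}>0$ throughout the range $a>-1$ --- to conclude directly that $|x^af_2'(x)|\to\gamma>0$. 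You instead compute $x^aW[f_1,f_2]\equiv 2/\pi$ and combine it with a local growth bound on $f_2$ and with $f_1(0^+)\neq 0$; this is a valid and rather elegant shortcut that has the bonus of identifying the limit explicitly as $2/(\pi f_1(0^+))$, and it also settles linear independence in the same stroke (the paper leaves that step essentially implicit in the invertibility of the substitution \eqref{besselcv}). One small imprecision: your Frobenius description of $f_2$ near $0$ as ``$O(x^{1-a})+O(1)$ for $a\neq 1$'' should allow for logarithmic corrections whenever the indicial roots $0$ and $1-a$ differ by a nonzero integer (e.g.\ $a=0,2,3,\dots$), not only in the equal-root case $a=1$; this does not damage the argument, since $x^{a+1}\cdot x^{\min(0,1-a)}\log x\to 0$ in every regime, but the statement as written is not quite what Frobenius theory gives. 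Alternatively, you could bound $f_2$ near $0$ directly from \eqref{Js}, \eqref{Y} and \eqref{Ys}, at which point the paper's one-line asymptotic computation becomes available anyway.
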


\begin{proof}
We write the equation $\Ba f = - \kappa f$ in the form 
\[
x^2 f''(x) + a x f'(x) + \kappa x^2 f(x) = 0.
\]
A comparison with \eqref{genbessel} shows that we must choose
\[
\alpha = \frac{1-a}2,\ \ \ \ \gamma = 1,\ \ \ \ \beta = \sqrt \kappa,\ \ \ \ \ \nu =  \frac{a-1}2 > - 1.
\]
With such value of $\nu$, the Bessel functions $J_{\nu}(x)$, and $Y_\nu(x)$
 solve the equation \eqref{besseleq}, and since their Wronskian is (see \cite[(5.9.2), p.113]{Le})
\[
W[J_\nu(x),Y_\nu(x)] = \frac{2}{\pi x},
\]
they are linearly independent on $\R^+$. By \eqref{besselcv} we infer that two linearly independent solutions of $\Ba f = - \kappa f$ are given by \eqref{ass}. 
Since $f_1(x) = x^{-\nu} J_\nu(\sqrt \kappa x)$, $f_2(x) = x^{-\nu} Y_\nu(\sqrt \kappa x)$, using \cite[(5.3.5), p.103 \& (5.4.9), p.105]{Le}, which give
\[
\frac{d}{dz}\left[z^{-\nu} J_\nu(z)\right] = - z^{-\nu} J_{\nu+1}(z),\ \ \ \ \frac{d}{dz}\left[z^{-\nu} Y_\nu(z)\right] = - z^{-\nu} Y_{\nu+1}(z),
\]
and keeping in mind that $a = 2\nu+1$, we find
\[
x^a f_1'(x) = x^{2\nu+1} O(x^{-\nu} J_{\nu+1}(\sqrt \kappa x)) = O(x^{2(\nu+1)})\ \longrightarrow\ 0,
\]
since $\nu>-1$. On the other hand, we have from \eqref{Ys}
\[
|x^a f_2'(x)|\ \longrightarrow\ \gamma > 0.
\]

\end{proof}

There is another important pair of linearly independent solutions to \eqref{besseleq}, the Hankel functions, or Bessel functions of the third kind. They are defined by the formulas
\begin{equation}\label{Hank}
H^{(1)}_\nu(z) = J_\nu(z) + i Y_\nu(z),\ \ \ \ \ \ \ H^{(2)}_\nu(z) = J_\nu(z) - i Y_\nu(z),\ \ \ \ \ \ |\arg z|<\pi.
\end{equation} 
The Wronskian of the Bessel and Hankel functions is
\begin{equation}\label{wronhank}
W[J_\nu(z),H^{(2)}_\nu(z)] = - \frac{2i}{\pi z},\ \ \ \  \ \ \ \ W[H^{(1)}_\nu(z),H^{(2)}_\nu(z)] = - \frac{4i}{\pi z}.
\end{equation}
We thus obtain corresponding linearly independent solutions to the equation $\Ba f = - \kappa f$ on $\R^+$
\begin{equation}\label{ass2}
g_1(x) = x^{\frac{1-a}2} H^{(1)}_{\frac{a-1}2}(\sqrt \kappa x),\ \ \ \ \ \ \emph{and}\ \ \ \ g_2(x) = x^{\frac{1-a}2} H^{(2)}_{\frac{a-1}2}(\sqrt \kappa x).
\end{equation}






\bibliographystyle{amsplain}

\end{document}